\documentclass[reqno]{amsart}
\usepackage{amsmath,amsbsy,amsfonts}
\usepackage{color}
\usepackage[notref,notcite]{showkeys}
\usepackage{hyperref}
\date{}
\pretolerance=10000

\def\nd{\noindent}
\def\thend{\rule{3mm}{3mm}}

\newtheorem{theorem}{Theorem}[section]

\newtheorem{prop}{Proposition}[section]
\newtheorem{lem}{Lemma}[section]
\newtheorem{rmk}{Remark}[section]

\newcommand{\w}{W_0^{1,\Phi}(\Omega)}


\newcommand{\R}{\mathbb{R}}


\begin{document}
\title{Multiplicity of solutions for quasilinear elliptic problems}
\vspace{1cm}

\author{M. L. M. Carvalho}
\address{M. L. M. Carvalho \newline Universidade Federal de Goi´as, IME, Goi\^ania-GO, Brazil }
\email{\tt marcos$\_$leandro$\_$carvalho@ufg.br}

\author{J. V. Gon\c calves}
\address{J. V. Gon\c calves \newline Universidade Federal de Goi´as, IME, Goi\^ania-GO, Brazil }
\email{\tt jv@ufg.br}

\author{Edcarlos D. da Silva}
\address{Edcarlos D da Silva \newline  Universidade Federal de Goi´as, IME, Goi\^ania-GO, Brazil}
\email{\tt edcarlos@ufg.br}

\author{K. O. Silva}
\address{K. O. Silva \newline Universidade Federal de Goiás, IME, Goi\^ania-GO, Brazil}
\email{\tt kaye$_0$@ufg.com}

\subjclass{35J20, 35J25, 35J60, 35J92, 58E05} \keywords{Variational methods, quasilinear elliptic problems, $\Phi$-Laplacian operator, Positive Solutions,
Nonreflexive Banach spaces}
\thanks{The third author was partially supported by Cnpq/Brazil. }

\begin{abstract}
It is established existence, uniqueness and multiplicity of solutions for a quasilinear elliptic problem problems driven by $\Phi$-Laplacian operator. Here we consider the reflexive and nonreflexive cases using an auxiliary problem. In order to prove our main results we employ variational methods, regularity results and truncation techniques.
\end{abstract}

\maketitle

\section{Introduction}
The present work concerns  existence, uniqueness and multiplicity of solutions for the elliptic problems
\begin{equation}\label{p1}
\left\{\
\begin{array}{l}
\displaystyle-\Delta_\Phi u=f(x),~\mbox{in}~\Omega,\\ \\
u=0~\mbox{on}~\partial \Omega
\end{array}
\right.
\end{equation}
and
\begin{equation}\label{p2}
\left\{\
\begin{array}{l}
\displaystyle-\Delta_\Phi u=g(x,u),~\mbox{in}~\Omega,\\ \\
u=0~\mbox{on}~\partial \Omega
\end{array}
\right.
\end{equation}
where $\Omega\subset\mathbb{R}^N,~N\geq 2$, is a bounded domain with smooth boundary, $\Phi$ is the even function defined by
$$\Phi(t)=\int_0^ts\phi(s)ds,~t\in\mathbb{R}.$$

Quasilinear elliptic problems driven by the $\Phi$-Laplacian operator have been widely considered in the last years. Here we infer the reader to \cite{chung,hui-2,Fuchs1,Fuchs2,Fuk_1,Fuk_2,hui-3,gossez-Czech,Gz1,Le2000,MI,MugnaiPapageorgiou,fang}. Most of them considered the Orlicz and Orlicz-Sobolev framework taking into account that $\Phi$ and $\widetilde{\Phi}$ verify the so called $\Delta_{2}$-condition. Under this condition the Orlicz and Orlicz-Sobolev space are separable and reflexive Banach spaces. The main novel in this work is to consider quasilinear elliptic problem such as \eqref{p1} and \eqref{p2} where the condition $\Delta_{2}$ condition is not available anymore. The main difficulty here is to consider the weak star convergence instead of the weak converge for the Orlicz-Sobolev space. The aproach here is variational using an energy functional associated to the elliptic problems \eqref{p1} or \eqref{p2}. At the same time,  we consider a regularity result finding existence and multiplicity of solutions for quasilinear elliptic equations without the $\Delta_{2}$ condition. In our setting we consider an auxiliary problem in order to recover some compactness for our energy functional which is crucial in variational methods.

Now we shall give the hypotheses for the functions $\phi, f$ and $g$. For the function $\phi:\mathbb{R}\rightarrow\mathbb{R}$  we assume that $\phi$ is in $C^1$ and it satisfies
\begin{itemize}
	\item[($\phi_1$)]  \     \ $\mbox{(i)} \ \ t\phi(t)\to 0 \ \mbox{as} \  t\to 0, \qquad\qquad
	~~ \mbox{(ii)} \  t\phi(t)\to\infty \ \mbox{as} \  t\to\infty$;
	\item[($\phi_2$)]  \ $t\phi(t) \ \mbox{\it is strictly increasing in}~ (0, \infty)$;
	\item[($\phi_3$)] there exist $\ell,m\in [1,N)$ such that
	$$\ell -1 =\inf_{t>0}\frac {(t\phi(t))^\prime}{\phi(t)}\leq \frac {(t\phi(t))^\prime}{\phi(t)}\leq m-1,~t>0;$$
	\item[($\phi_4$)] $a:=\displaystyle\inf_{t>0}\frac{t^m}{\Phi(t)}>0$.
\end{itemize}
Moreover we assume that
\begin{equation}\label{cond-f}
	f\in L^N(\Omega).
\end{equation}
For the function $g:\Omega\times \mathbb{R}\rightarrow\mathbb{R}$ we suppose that $g$ is in $C^{0}$ class and $g(x,0)=0$ for any $x\in\Omega$. Furthermore, we assume also the following assumptions:
\begin{description}
	\item[$(g_1)$] there are a  constant $C>0$ and  an  N-function
\end{description}
$$
\Psi(t)=\int_0^t\psi(s)ds
$$
\nd with $\psi:[0, \infty)\to\mathbb{R}$ continuous and  satisfying
\begin{description}
	\item[$(\psi_1)$]~~~~~$\displaystyle 1< m<\ell_\Psi:=\inf_{t>0}\frac{t\psi(t)}{\Psi(t)}\leq
	\sup_{t>0}\frac{t\psi(t)}{\Psi(t)}=:m_\Psi<1^*:=\frac{ N}{N-1}$;
\end{description}
\nd  such that
\begin{equation*}
|g(x,t)|\leq C(1+\psi(t)),
\end{equation*}
\begin{description}
	\item[$(g_2)$]  there is an  an N-function
\end{description}
$$
\Gamma(t)=\int_0^t\gamma(s)ds
$$
\nd with $\gamma:[0, \infty) \to \R$ continuous and satisfying
\begin{description}
	\item[$(\gamma_1)$]~~~~~   $\displaystyle  N<\ell_\Gamma:=\inf_{t>0}\frac{t\gamma(t)}{\Gamma(t)}
	\leq \sup_{t>0}\frac{t\gamma(t)}{\Gamma(t)}=:m_\Gamma<\infty$,
\end{description}
\nd  such that
\begin{equation*}
\Gamma\left(\frac{G(x,t)}{|t|^\ell}\right)\leq C\overline{G}(x,t), x \in \Omega,~~|t|\geq R,
\end{equation*}
\nd  where $C, R$ are positive constants,
$$G(x,t):=\int_0^tg(x,s)ds$$
and
$$
\overline{G}(x,t):= tg(x,t)-mG(x,t), x \in \Omega, t \in \mathbb{R}.
$$
\nd Here we denote $\lambda_{1} > 0$ the first eigenvalue for the operator $\Delta_{\Phi}$.  Recall that, using hypothesis $(\phi_3)$, it folows from the Poincar\'e inequality, (see e.g.  \cite{hui-2}, \cite{gossez-Czech}), that
	\begin{equation*}\label{lambda1}
	\lambda_{1} \int_\Omega \Phi(u) dx \leq \int_\Omega \Phi(|\nabla u|)dx,~u \in  \w.
	\end{equation*}
Now we shall consider some additional hypotheses:
\begin{description}
	\item[$(g_3)$]~~~~~~~~~~~~~~~~~~~ $\displaystyle \lim_{t\rightarrow\infty }\frac{g(x,t)}{|t|^{m-1}}=\infty$,
\end{description}
\begin{description}
	\item[$(g_4)$]~~~~~~~~~~~~~~~~~~~~ $\displaystyle \limsup_{t\rightarrow 0 }\frac{g(x,t)}{|t|\phi(t)}=\lambda< \lambda_1$,
\end{description}
Due to the nature of the operator
$$\Delta_\Phi u=\mbox{div}(\phi(|\nabla u|)\nabla u)$$
we need to consider the framework of Orlicz-Sobolev spaces $\w$. It is important to emphasize that $\Phi$-Laplacian operator is not homogenous. This is a serious difficulty in order to use variational methods. In order to overcome this difficulty we shall consider some specific estimates in Orlicz and Orlicz-Sobolev spaces.
\begin{rmk}
Taking into account hypothesis $(\phi_3)$ we have that $t\mapsto \frac{t^m}{\Phi(t)}$ is a strictly increasing function. As a consequence we mention that
$$\inf_{t>0}\frac{t^m}{\Phi(t)}=\lim_{t\rightarrow 0_+}\frac{t^m}{\Phi(t)}.$$
\end{rmk}
Our principal result can be stated in the following form
\begin{theorem}\label{th1}
	Assume $(\phi_1)-(\phi_4)$ and \eqref{cond-f}. Then there exists an unique solution for the elliptic problem \eqref{p1}, that is, there exists an unique function $u\in\w$ in such way that
	\begin{equation}\label{sol-fraca2}
		\int_{\Omega}\phi(|\nabla u|)\nabla u\nabla vdx=\int_\Omega fvdx,~v\in\w.
	\end{equation}
Moreover, asssuming that $\ell > 1$ the solution given just above is in also in $L^{\infty}(\Omega)$ whenever the function $\Phi$ is equivalent to the function $t \rightarrow |t|^{r}$ for some $r > 1$, i.e, there exist $c_{1}, c_{2} > 0$ in such way that $c_{1}|t|^{r} \leq \Phi(t) \leq c_{2}|t|^{r}$ for any $ t \in \mathbb{R}$.
\end{theorem}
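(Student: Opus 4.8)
The plan is to realize \eqref{p1} as the Euler--Lagrange equation of the energy functional
\[
J(u)=\int_\Omega \Phi(|\nabla u|)\,dx-\int_\Omega f\,u\,dx,\qquad u\in\w .
\]
First I would record the structural estimates: integrating the two-sided bound in $(\phi_3)$ gives $\ell\le t\Phi'(t)/\Phi(t)\le m$, hence the modular inequalities $\min\{s^{\ell},s^{m}\}\,\Phi(t)\le\Phi(st)\le\max\{s^{\ell},s^{m}\}\,\Phi(t)$ and the comparison of $\int_\Omega\Phi(|\nabla u|)\,dx$ with powers of the Luxemburg norm. Since $m<\infty$, the function $\Phi$ satisfies the $\Delta_{2}$ condition, so $J\in C^{1}(\w,\R)$ with
\[
J'(u)v=\int_\Omega\phi(|\nabla u|)\nabla u\cdot\nabla v\,dx-\int_\Omega f\,v\,dx,
\]
and a critical point of $J$ is exactly a weak solution in the sense of \eqref{sol-fraca2}. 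Using $f\in L^{N}(\Omega)$, the Orlicz--Sobolev embedding $\w\hookrightarrow L^{N/(N-1)}(\Omega)$ and the Poincar\'e inequality, the linear term is controlled by the norm; strict convexity of $\Phi$ (a consequence of $(\phi_2)$) makes $J$ strictly convex.

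Next I would run the direct method. A minimizing sequence $(u_n)$ is bounded in $\w$. In the reflexive regime $\ell>1$ (equivalently $\widetilde{\Phi}\in\Delta_{2}$) coercivity is immediate, since $\int_\Omega\Phi(|\nabla u|)\,dx$ then grows at least like the $\ell$-th power of the norm; one passes to a weakly convergent subsequence and concludes from the weak lower semicontinuity of the convex modular together with the compact continuity of the linear term, so that $J'(u)=0$ yields \eqref{sol-fraca2}. The delicate regime is the nonreflexive borderline $\ell=1$: here I would realize $L^{\Phi}$ as the dual space $(E^{\widetilde{\Phi}})^{\ast}$, invoke Banach--Alaoglu to extract a subsequence with $\nabla u_n\rightharpoonup\nabla u$ in the weak-$\ast$ sense, and upgrade lower semicontinuity of $u\mapsto\int_\Omega\Phi(|\nabla u|)\,dx$ to the weak-$\ast$ topology via an Ioffe-type theorem for convex integrands; this is exactly the point where the auxiliary (approximating, reflexive) problem is used to recover compactness and to pass to the limit. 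Uniqueness follows either from strict convexity of $J$ or, equivalently, from strict monotonicity of the field $\xi\mapsto\phi(|\xi|)\xi$: subtracting the equations satisfied by two solutions $u_1,u_2$ and testing with $u_1-u_2$ forces $\nabla u_1=\nabla u_2$ a.e., hence $u_1=u_2$.

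For the final $L^{\infty}$ assertion one is in the reflexive regime $\ell>1$, and under $c_{1}|t|^{r}\le\Phi(t)\le c_{2}|t|^{r}$ the modular is comparable to $\int_\Omega|\nabla u|^{r}\,dx$ and $\w$ is comparable to $W_{0}^{1,r}(\Omega)$, so \eqref{p1} is, up to equivalence of norms, an $r$-Laplacian-type equation with datum $f\in L^{N}(\Omega)$ and $r>1$. I would then carry out a Stampacchia/De Giorgi truncation: testing \eqref{sol-fraca2} with $v=(|u|-k)^{+}\,\mathrm{sgn}(u)$ and using the ellipticity lower bound $\phi(|\nabla u|)|\nabla u|^{2}\ge c\,|\nabla u|^{r}$ together with the Sobolev inequality $W_{0}^{1,r}\hookrightarrow L^{r^{*}}(\Omega)$, $r^{*}=Nr/(N-r)$, produces a nonlinear recurrence for the superlevel sets $A_{k}=\{\,|u|>k\,\}$. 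Since $f\in L^{N}(\Omega)$ and $N>N/r$ because $r>1$, H\"older's inequality lets the excess integral $\int_{A_k}|f|\,|\,|u|-k\,|\,dx$ be absorbed, and Stampacchia's lemma yields a level $k_{0}$ with $|A_{k_{0}}|=0$, that is $\|u\|_{L^{\infty}(\Omega)}\le k_{0}$.

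The principal obstacle is the nonreflexive borderline $\ell=1$ in the existence step. There both the coercivity of $J$ and the compactness used to pass to the limit degenerate (the modular only dominates the first power of the norm), so the argument must be run in the weak-$\ast$ topology of $L^{\Phi}=(E^{\widetilde{\Phi}})^{\ast}$, and the linear term $\int_\Omega f\,u$ sits precisely at the critical embedding $\w\hookrightarrow L^{N/(N-1)}(\Omega)$; securing its continuity along weak-$\ast$ convergent sequences — which is what the auxiliary problem is designed to provide — is the crux of the proof.
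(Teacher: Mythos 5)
Your overall architecture --- minimize the convex energy, derive the Euler--Lagrange equation at the minimizer, obtain uniqueness from strict monotonicity of $\xi\mapsto\phi(|\xi|)\xi$, and prove boundedness by a truncation/iteration scheme --- matches the paper's, and two of your choices are legitimate variants: you propose a Stampacchia/De Giorgi level-set argument where the paper runs a Moser iteration (Theorem \ref{taux}); both work since $f\in L^N(\Omega)$ and $N>N/r$ for $r>1$. The reflexive case $\ell>1$ and the uniqueness step are essentially the paper's Propositions \ref{prop_ltda_inf}--\ref{teo_prin} and \ref{mono}--\ref{unic}. One caution: the assertion that $\Phi\in\Delta_2$ alone makes $J\in C^1(\w,\R)$ is not justified when $\widetilde{\Phi}\notin\Delta_2$; what you actually need, and what the paper proves via the mean value theorem, the bound $\widetilde{\Phi}(t\phi(t))\le\Phi(2t)$ and dominated convergence, is the one-sided Gateaux derivative at the minimizer, which suffices.

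The genuine gap is the borderline case $\ell=1$, which hypothesis $(\phi_3)$ explicitly allows and which is the main point of the theorem. You correctly name the obstruction (loss of reflexivity, weak-$*$ topology on $L^{\Phi}=(E^{\widetilde{\Phi}})^{*}$) but you do not resolve it: you simultaneously gesture at an Ioffe-type weak-$*$ lower semicontinuity argument for a direct minimization and at an unspecified ``auxiliary problem'', without carrying out either. The paper's actual mechanism is the explicit regularization $\Phi_\epsilon(t)=\Phi(t)+\frac{\epsilon}{m}t^m$ (Lemma \ref{ell-epsilon}), which satisfies $\ell_\epsilon>1$ and hence yields reflexive spaces; one solves \eqref{aux} for each $\epsilon>0$, derives the uniform modular bound \eqref{e3} by testing with $u_\epsilon$ and absorbing the $W^{1,1}_0$-norm, extracts a weak-$*$ limit, and then passes to the limit in the nonlinear term via a density argument with $C_0^\infty$ test functions together with the $(S)^{+}$ property (Proposition \ref{S+}) and a.e.\ convergence of gradients. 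None of these steps appear in your sketch. Moreover, you locate ``the crux'' in the continuity of $\int_\Omega f u_n\,dx$ along weak-$*$ sequences; that is in fact the easy part, since boundedness in $W^{1,1}_0(\Omega)$ gives $L^1$-compactness and $fu_n$ is uniformly integrable ($f\in L^N$, $(u_n)$ bounded in $L^{N/(N-1)}$). The hard parts are showing that the weak-$*$ limit lies in $\w$ and that the equation survives the passage to the limit --- and those are precisely the steps you would still have to supply.
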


We point out that the function
$$\phi(t)=\frac{\log(1+|t|)}{|t|},~t \in \mathbb{R}\backslash\{0\}$$
satisfies the hypotheses $(\phi_1)-(\phi_4)$. In this case the operator in problem \eqref{p1} has logarithmic growth with respect to the gradient which can be written in the following form
\begin{eqnarray}\label{bb}
\left\{\
\begin{array}{l}
\displaystyle-\mbox{div}\left(\frac{\log(1+|\nabla u|)}{|\nabla u|}\nabla u\right)=f~\mbox{in}~\Omega,\\ \\
u=0~\mbox{on}~\partial \Omega.
\end{array}
\right.
\end{eqnarray}
Here we stress out that $\ell = 1$ and $a=m=2$ which give us an concrete example where the N-Function $\Phi$ is in such way that $\tilde{\Phi}$ does not verity the well known $\Delta_{2}$ condition at infinity, see \cite{Rao1}. As a consequence the N-function $\Phi : \mathbb{R} \rightarrow \mathbb{R}$ given by
\begin{equation*}
\Phi(t) = \int_{0}^{t} \log(1+ s) ds, s > 0
\end{equation*}
is in such way that $W^{1,\tilde{\Phi}}_{0}(\Omega)$ is not reflexive. The problem \eqref{bb} have been studied by many authors during the last years, see Boccardo et al \cite{Boccardo1,Boccardo2}, Esposito et al \cite{esposito1}, Passarelli \cite{Passarelli}, Fuchs \cite{Fuchs1,Fuchs2}, Zhang et al \cite{Zhang} and references therein. For further results on Orlicz and Orlicz-Sobolev framework in refer the reader to \cite{A,Fuk_1,Fuk_2,gossez-Czech,Gz1,MI}. The main feature in this work is to find a weak solution for the problem \eqref{p1} in the nonreflexive case using a sequence of approximate problems where in each term for this sequence the associated Orlicz-Sobolev space is reflexive. So that taking the limit in this sequence the solution for the nonreflexive case is obtained by a careful analysis on continuous and compact embedding involved in Orlicz-Sobolev spaces.

For the next result we shall consider the nonlinear elliptic problem \eqref{p2} under some superlinear conditions at infinity. The main feature here is to consider nonreflexive problems without the well known Ambrosetti-Rabinowitz condition at infinity. Namely, the Ambrosetti-Rabinowitz condition, for the function $g$, in short $(AR)$ condition, says that
\begin{equation*}
0 < \theta G(x,t) \leq t g(x, t) , x \in \Omega, |t| \geq R
\end{equation*}
holds true for some $\theta > m$ and $R > 0$. As a product the $(AR)$ condition implies that
\begin{equation}\label{cc}
G(x,t) \geq c_{1} |t|^{m} - c_{2}, x \in \Omega, t \in \mathbb{R}
\end{equation}
holds for some $c_{1}, c_{2} > 0.$
Nevertheless, there are superlinear functions in such way that \eqref{cc} in not satisfied. For example, we mention that $g(x,t) = |t|^{m -2} t ln (1 + |t|)$ does not verity the superlinear condition given in \eqref{cc} for each $m \in (1, N)$. As a consequence the function $g$ just above does not verify the $(AR)$ condition.  We point out that the $(AR)$ condition implies some compactness properties such as the Palais-Smale condition at infinity which is crucial in variational methods. As the $(AR)$ condition is not available in our setting we need to consider some compactness condition such as the Cerami condition. Latter on, we shall give a precise definition for the Palais-Smale condition and Cerami condition. For the next result we shall consider hypotheses $(g_{1})-(g_{4})$ proving that the associated functional for the problem \eqref{p2} satisfies the well known Cerami condition which is sufficient in variational procedures. Our second result can be read in the following form

\begin{theorem}\label{th2}
	Assume $(\phi_1),(\phi_2),(\phi_4),$
		\begin{description}
			\item[$(\phi_3)^\prime$]$\displaystyle 1 \leq \ell \leq \frac{\phi(t)t^2}{\Phi(t)} \leq m, \,\,t>0,$
		\end{description}
	and $(g_{1})- (g_{4})$. Then problem \eqref{p2} at least one solution $u \in \w $, that is, there exists a function $u\in\w$  in such way that
	\begin{equation}\label{sol-fraca}
		\int_{\Omega}\phi(|\nabla u|)\nabla u\nabla vdx=\int_\Omega g(x, u) vdx,~v\in\w.
	\end{equation}
Assuming $(\phi_{3})$ instead of $(\phi_{3})^{\prime}$, problem \eqref{p2} admits at least two weak solutions $u_{1}, u_{2}  \in \w /\{0\}$ satisfying $u_{1} \geq 0 $ and $u_{2} \leq 0$ in $\Omega$. Furthermore, assuming also that $\ell > 1$ and the function $\Phi$ is equivalent to $t \rightarrow |t|^{r}, t \in \mathbb{R}$ for some $r > 1$, the solutions $u_{1}, u_{2}$ belong to $C^{1,\alpha}$ for some $\alpha \in (0,1)$.
\end{theorem}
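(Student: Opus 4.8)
The plan is to obtain solutions of \eqref{p2} as critical points of the energy functional
$$I(u)=\int_\Omega\Phi(|\nabla u|)\,dx-\int_\Omega G(x,u)\,dx,\qquad u\in\w .$$
First I would check that $I\in C^1(\w,\R)$ and that its critical points are exactly the weak solutions in the sense of \eqref{sol-fraca}. The subcritical growth encoded in $(g_1)$ together with $(\psi_1)$ (in particular $m_\Psi<1^*=N/(N-1)$) makes the Nemytskii operator associated with $g$ well defined, and makes $u\mapsto\int_\Omega G(x,u)\,dx$ a $C^1$ functional whose derivative is \emph{compact}, the compactness coming from the compact embedding $\w\hookrightarrow L^\Psi(\Omega)$. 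This compactness is what will let me upgrade weak convergence to strong convergence later.

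Next I would establish the mountain pass geometry. From $(g_4)$, for small $|t|$ one has $g(x,t)\le(\lambda+\varepsilon)|t|\phi(t)$ with $\lambda+\varepsilon<\lambda_1$, hence $G(x,t)\le(\lambda+\varepsilon)\Phi(t)$; combined with the Poincar\'e inequality $\lambda_1\int_\Omega\Phi(u)\,dx\le\int_\Omega\Phi(|\nabla u|)\,dx$ and the equivalence between the modular and the norm (a consequence of $(\phi_3)^\prime$ and $(\phi_4)$), this yields $I(u)\ge\alpha>0$ on a small sphere $\|u\|=\rho$. On the other hand, $(g_3)$ forces $G(x,t)/|t|^m\to\infty$, so fixing $v\neq0$ and using $\int_\Omega\Phi(|\nabla(tv)|)\le t^m\int_\Omega\Phi(|\nabla v|)$ for $t\ge1$ (the upper bound in $(\phi_3)^\prime$), I would conclude $I(tv)\to-\infty$ and pick $e$ with $\|e\|>\rho$ and $I(e)<0$.

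The heart of the argument, and the step I expect to be the main obstacle, is the verification of the Cerami condition \emph{without} the Ambrosetti--Rabinowitz condition. For a Cerami sequence $(u_n)$ at level $c$ I would first prove boundedness: evaluating $I(u_n)-\tfrac1m\langle I'(u_n),u_n\rangle$ and using $(\phi_3)^\prime$ (which gives $\phi(t)t^2\le m\Phi(t)$, so the gradient contribution is nonnegative) reduces the estimate to $\tfrac1m\int_\Omega\overline{G}(x,u_n)\,dx$, which is therefore bounded. Assuming by contradiction $\|u_n\|\to\infty$, one has $\int_\Omega\Phi(|\nabla u_n|)\,dx\to\infty$, whence $\int_\Omega G(x,u_n)\,dx\to\infty$ at the same rate since $I(u_n)=c+o(1)$. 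The delicate point is to contradict this: exploiting $(g_2)$--$(\gamma_1)$, namely $\Gamma(G(x,t)/|t|^\ell)\le C\,\overline{G}(x,t)$ with $\ell_\Gamma>N$, a Jensen/H\"older argument of Miyagaki--Souto type together with the Sobolev embedding controls $\int_\Omega G(x,u_n)\,dx$ by a function of the bounded quantity $\int_\Omega\overline{G}(x,u_n)\,dx$, contradicting the blow-up just obtained. Once boundedness is secured, I would pass to a weakly (weak-$*$, in the nonreflexive regime $\ell=1$, recovered through the auxiliary problem of the introduction) convergent subsequence and conclude strong convergence in $\w$ via the $(S_+)$ property of $-\Delta_\Phi$ and the compactness of the $g$-term; the mountain pass theorem then furnishes a nontrivial critical point, proving the first assertion.

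For the multiplicity statement under $(\phi_3)$, I would run the same scheme with the two truncated nonlinearities $g_+(x,t)=g(x,t^+)$ and $g_-(x,t)=g(x,-t^-)$ and the corresponding functionals $I_\pm$. The mountain pass critical points $u_1,u_2$ so obtained have constant sign: testing the equation against $u_1^-$ (resp. $u_2^+$) gives $u_1\ge0$, $u_2\le0$, and the strong maximum principle for $-\Delta_\Phi$ (available under the finer condition $(\phi_3)$) together with $I(u_i)\ge\alpha>0$ ensures $u_1,u_2\in\w\setminus\{0\}$. Finally, when $\ell>1$ and $\Phi$ is equivalent to $t\mapsto|t|^r$, the operator is uniformly elliptic of $r$-Laplacian type, so combining the $L^\infty$ bound obtained as in Theorem~\ref{th1} with the regularity theory for such operators (DiBenedetto--Tolksdorf--Lieberman) upgrades $u_1,u_2$ to $C^{1,\alpha}$ for some $\alpha\in(0,1)$.
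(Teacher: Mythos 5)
Your sketch assembles the right ingredients for the \emph{reflexive} situation (mountain pass geometry from $(g_4)$ and $(\psi_1)$, boundedness of Cerami sequences via $I(u_n)-\tfrac1m\langle I'(u_n),u_n\rangle$ and the Miyagaki--Souto--type use of $(g_2)$, truncations $g^{\pm}$ for the two constant--sign solutions, Moser iteration plus Lieberman for $C^{1,\alpha}$), and these are indeed the ingredients the paper invokes. But there is a genuine gap: your primary line of argument is a \emph{direct} minimax on $\w$, and the theorem is stated precisely so as to cover $\ell=1$, where $\widetilde\Phi\notin\Delta_2$ and $\w$ is not reflexive. In that regime the functional $I$ is at best Gateaux differentiable (continuity of $u\mapsto\phi(|\nabla u|)\nabla u$ into $L_{\widetilde\Phi}$ fails without $\widetilde\Phi\in\Delta_2$), bounded sequences only admit weak--star convergent subsequences through the identification of $\w$ with a subspace of a dual of a separable space, and the deformation machinery behind the mountain pass theorem and the verification of the Cerami condition cannot be run directly. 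Your parenthetical ``recovered through the auxiliary problem of the introduction'' is exactly where the actual proof lives, and it is not a remark one can defer: the paper's argument applies the mountain pass theorem and the Cerami condition only to the regularized functionals $J^{\pm}_\epsilon$ on the reflexive spaces $W^{1,\Phi_\epsilon}_0(\Omega)$, $\Phi_\epsilon=\Phi+\tfrac{\epsilon}{m}t^m$ (citing Carvalho--Gon\c calves--da Silva for those verifications), and then the real work is (i) a mountain pass level bound $r_0>0$ \emph{uniform in} $\epsilon$, (ii) bounds on $\|u_\epsilon^{\pm}\|$ uniform in $\epsilon$, and (iii) the passage to the limit $\epsilon\to0$ via weak--star convergence, a density argument in the choice of test functions, and the $(S_+)$ property of $-\Delta_\Phi$ adapted to weak--star convergence. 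None of (i)--(iii) appears in your proposal, and without them the existence and multiplicity claims are not established in the case the theorem is actually about.

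Two smaller points. First, your nontriviality argument for $u_1,u_2$ via the strong maximum principle is not what the paper does (and would itself require a Pucci--Serrin--type result for $-\Delta_\Phi$ that is not set up here); the paper instead gets $u_i\neq0$ from the uniform level bound $J^{\pm}_\epsilon(u^{\pm}_\epsilon)\ge r_0>0$ combined with the strong convergence $u^{\pm}_\epsilon\to u_i$ in $\w$ furnished by the $(S_+)$ property. Second, your Cerami--boundedness computation uses the upper bound $\phi(t)t^2\le m\Phi(t)$ from $(\phi_3)'$ applied to $\Phi$, whereas in the approximation scheme the same inequality must be checked for $\Phi_\epsilon$ (this is Lemma \ref{ell-epsilon}(ii) in the paper) so that the estimates do not degenerate as $\epsilon\to0$. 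The regularity endgame ($L^\infty$ by Moser iteration when $\Phi\sim|t|^r$, then Lieberman) matches the paper.
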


The paper is organized as follows: Section 2 is devoted to an overview on Orlicz and Orlicz-Sobolev framework. In Section 3 we consider the elliptic problem \eqref{p1} in the reflexive case. In Section 4 we give some existence results for the problem \eqref{p1} in the nonreflexive case. Section 5 is devoted to regularity results to the elliptic problem \eqref{p1} and \eqref{p2}. In Section 5 we give the proof of our main results.

\section{Orlicz and Orlicz-Sobolev spaces}
\nd The reader is  referred to  \cite{A,DT,Rao1} regarding Orlicz-Sobolev spaces.  The usual norm on $L_{\Phi}(\Omega)$ is (Luxemburg norm),
\[
\|u\|_\Phi=\inf\left\{\lambda>0~\big|~\int_\Omega \Phi\left(\frac{u(x)}{\lambda}\right) dx \leq 1\right\},
\]
\nd the  Orlicz-Sobolev norm of $ W^{1, \Phi}(\Omega)$ is
\[
\displaystyle \|u\|_{1,\Phi}=\|u\|_\Phi+\sum_{i=1}^N\left\|\frac{\partial u}{\partial x_i}\right\|_\Phi,
\]
\nd while $\w$ denotes the closure of $C_0^{\infty}(\Omega)$ with respect to the usual norm of $W^{1,\Phi}(\Omega)$. Recall that
$$
\widetilde{\Phi}(t) = \displaystyle \max_{s \geq 0} \{ts - \Phi(s) \},~ t \geq 0.
$$
\nd It turns out that when $\Phi$ and $\widetilde{\Phi}$  are  $N$-functions  satisfying the $\Delta_2$-condition we mention that $L_{\Phi}(\Omega)$  and $W^{1,\Phi}(\Omega)$  are separable, reflexive,  Banach spaces, see \cite[p 22]{Rao1}. However, as was quoted in the introduction we shall consider the case when the function $\Phi$ does not verify the $\Delta_{2}$-condition. Anyway, we shall consider some important properties for Orlicz and Orlicz-Sobolev spaces.

\begin{rmk}\label{reflex}
	It is well known that $(\phi_3)$ implies that $(\phi_3)^\prime$ is verified. Furthermore, assuming $1 < \ell \leq m < N$, we obtain $\Phi,\widetilde\Phi\in\Delta_2$. Reciprocally, assuming that
$\Phi,\widetilde\Phi\in\Delta_2$ then $1 < \ell \leq m < N$.
\end{rmk}

\nd By the Poincar\'e Inequality, (see e.g.  \cite{gossez-Czech}),
\[
\int_\Omega\Phi(u)dx\leq \int_\Omega\Phi(2d|\nabla u|)dx
\]
\nd where $d=\mbox{diam}(\Omega)$, and it follows that
\[
\|u\|_\Phi\leq 2d\|\nabla u\|_\Phi~\mbox{for}~ \w.
\]
\nd As a consequence,  $\|u\| :=\|\nabla u\|_\Phi$ defines a norm in $\w$, equivalent to $\|.\|_{1,\Phi}$. Let $\Phi_*$ be the inverse of the function
$$
t\in(0,\infty)\mapsto\int_0^t\frac{\Phi^{-1}(s)}{s^{\frac{N+1}{N}}}ds
$$
\nd which extends to ${\R}$ by  $\Phi_*(t)=\Phi_*(-t)$ for  $t\leq 0.$
We say that a N-function $\Psi$ grow essentially more slowly than $\Phi_*$, we write $\Psi<<\Phi_*$, if
$$
\lim_{t\rightarrow \infty}\frac{\Psi(\lambda t)}{\Phi_*(t)}=0,~~\mbox{for all}~~\lambda >0.
$$

The imbedding below (cf. \cite{A}) will be  used in this paper:
$$
\displaystyle \w \stackrel{\tiny cpt}\hookrightarrow L_\Psi(\Omega),~~\mbox{if}~~\Psi<<\Phi_*,
$$
in particular, as $\Phi<<\Phi_*$ (cf. \cite[Lemma 4.14]{Gz1}),
$$
\w \stackrel{\tiny{cpt}} \hookrightarrow L_\Phi(\Omega).
$$
Furthermore,
$$
W_0^{1,\Phi}(\Omega) \stackrel{\mbox{\tiny cont}}{\hookrightarrow} L_{\Phi_*}(\Omega).
$$
It is worthwhile to mention that under hypotheses $(\phi_1)-(\phi_2)$ and $(\phi_3)$  (cf. \cite[Lem. D.2]{clement}) the following continuous
embedding holds
$$L^{m}(\Omega) \stackrel{\mbox{\tiny cont}}\hookrightarrow L_\Phi(\Omega)\stackrel{\mbox{\tiny cont}}\hookrightarrow L^\ell(\Omega).$$

Now we refer the reader to \cite{Fuk_1, Fuk_2} for some elementary results on Orlicz and Orlicz-Sobolev spaces.
\vskip.2cm
\begin{prop}\label{lema_naru}
       Assume that  $\phi$ satisfies  $(\phi_1)-(\phi_3)$.
        Set
         $$
         \zeta_0(t)=\min\{t^\ell,t^m\},~~~ \zeta_1(t)=\max\{t^\ell,t^m\},~~ t\geq 0.
        $$
  \nd Then  $\Phi$ satisfies
       $$
            \zeta_0(t)\Phi(\rho)\leq\Phi(\rho t)\leq \zeta_1(t)\Phi(\rho),~~ \rho, t> 0,
        $$
$$
\zeta_0(\|u\|_{\Phi})\leq\int_\Omega\Phi(u)dx\leq \zeta_1(\|u\|_{\Phi}),~ u\in L_{\Phi}(\Omega).
 $$
    \end{prop}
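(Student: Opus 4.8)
The plan is to reduce everything to the pointwise two-sided bound $\ell \le t\Phi'(t)/\Phi(t)\le m$ for $t>0$ (which is $(\phi_3)^\prime$, since $\Phi'(t)=t\phi(t)$), and then to bootstrap from it. As $\phi\in C^1$ and $\Phi'(t)=t\phi(t)$, hypothesis $(\phi_3)$ reads $\ell-1\le t\Phi''(t)/\Phi'(t)\le m-1$. Integrating $\Phi''/\Phi'$ over $[s,t]$ with $0<s\le t$ gives $\Phi'(t)(s/t)^{m-1}\le\Phi'(s)\le\Phi'(t)(s/t)^{\ell-1}$; integrating this in $s$ over $[0,t]$ — the integrals converge at $0$ because $\ell,m\ge1$ — yields $t\Phi'(t)/m\le\Phi(t)\le t\Phi'(t)/\ell$, which is the asserted bound. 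Hypotheses $(\phi_1)$–$(\phi_2)$ ensure that $\Phi$ is a finite, continuous, strictly convex N-function with $\Phi'(t)>0$ for $t>0$, so all these logarithmic manipulations are legitimate.

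For the first pair, set $\xi(t):=t\Phi'(t)/\Phi(t)\in[\ell,m]$, so that $(\ln\Phi)'(t)=\xi(t)/t$. Fixing $\rho>0$ and substituting $s=\rho\tau$,
$$\ln\frac{\Phi(\rho t)}{\Phi(\rho)}=\int_\rho^{\rho t}\frac{\xi(s)}{s}\,ds=\int_1^t\frac{\xi(\rho\tau)}{\tau}\,d\tau.$$
For $t\ge1$ the bound $\ell\le\xi\le m$ gives $\ell\ln t\le\ln(\Phi(\rho t)/\Phi(\rho))\le m\ln t$, hence $t^\ell\Phi(\rho)\le\Phi(\rho t)\le t^m\Phi(\rho)$, and $t\ge1$ forces $\zeta_0(t)=t^\ell$, $\zeta_1(t)=t^m$. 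For $0<t<1$ the factor $\ln t$ is negative, so the inequalities reverse to $t^m\Phi(\rho)\le\Phi(\rho t)\le t^\ell\Phi(\rho)$, while now $\zeta_0(t)=t^m$, $\zeta_1(t)=t^\ell$; either way this is exactly the claimed $\zeta_0(t)\Phi(\rho)\le\Phi(\rho t)\le\zeta_1(t)\Phi(\rho)$.

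For the modular estimate, let $u\in L_\Phi(\Omega)$, $u\ne0$, and put $\lambda:=\|u\|_\Phi>0$. Applying the first pair pointwise with $\rho=|u(x)|/s$ and $t=s$ (using that $\Phi$ is even) and integrating, for every $s>0$,
$$\zeta_0(s)\int_\Omega\Phi\!\left(\frac{u}{s}\right)dx\le\int_\Omega\Phi(u)\,dx\le\zeta_1(s)\int_\Omega\Phi\!\left(\frac{u}{s}\right)dx.$$
The definition of the Luxemburg norm gives $\int_\Omega\Phi(u/s)\,dx\le1$ for $s>\lambda$ and $\int_\Omega\Phi(u/s)\,dx>1$ for $0<s<\lambda$; inserting these yields $\int_\Omega\Phi(u)\,dx\le\zeta_1(s)$ for all $s>\lambda$ and $\int_\Omega\Phi(u)\,dx\ge\zeta_0(s)$ for all $s<\lambda$, and letting $s\to\lambda^{\pm}$ with $\zeta_0,\zeta_1$ continuous delivers $\zeta_0(\lambda)\le\int_\Omega\Phi(u)\,dx\le\zeta_1(\lambda)$. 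I expect this last step to be the main obstacle: because $\Phi,\widetilde\Phi\notin\Delta_2$ in the case of interest, one cannot invoke the usual modular–norm equivalences nor the identity $\int_\Omega\Phi(u/\lambda)\,dx=1$, so the argument must rest only on the one-sided bounds coming directly from the infimum defining $\|\cdot\|_\Phi$, combined with the above limiting argument.
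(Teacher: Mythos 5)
Your argument is correct, and it is the standard proof of this lemma: the paper itself gives no proof, simply citing Fukagai--Ito--Narukawa \cite{Fuk_1,Fuk_2}, where exactly this route (integrating the logarithmic-derivative bound coming from $(\phi_3)$ twice to get $\ell\leq t\Phi'(t)/\Phi(t)\leq m$, then once more to get the $\zeta_0,\zeta_1$ sandwich, and finally squeezing the modular between $\zeta_0(s)$ and $\zeta_1(s)$ for $s$ on either side of the Luxemburg norm) is carried out. Your closing caveat is slightly misplaced --- under $(\phi_3)$ with $m<N$ the upper bound you just proved already gives $\Phi\in\Delta_2$ (it is only $\widetilde\Phi$ that may fail $\Delta_2$ when $\ell=1$) --- but your one-sided limiting argument is valid regardless, so this does not affect the proof.
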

\begin{prop}\label{lema_naru_*}
    Assume that  $\phi$ satisfies $(\phi_1)-(\phi_3)$.  Set
    $$
    \zeta_2(t)=\min\{t^{\ell^*},t^{m^*}\},~~ \zeta_3(t)=\max\{t^{\ell^*},t^{m^*}\},~~  t\geq 0
    $$
\nd where $1<\ell,m<N$ and $m^* = \frac{mN}{N-m}$, $\ell^* = \frac{\ell N}{N-\ell}$.  Then
        $$
            \ell^*\leq\frac{t^2\Phi'_*(t)}{\Phi_*(t)}\leq m^*,~t>0,
       $$
        $$
            \zeta_2(t)\Phi_*(\rho)\leq\Phi_*(\rho t)\leq \zeta_3(t)\Phi_*(\rho),~~ \rho, t> 0,
       $$
       $$
            \zeta_2(\|u\|_{\Phi_{*}})\leq\int_\Omega\Phi_{*}(u)dx\leq \zeta_3(\|u\|_{\Phi_*}),~ u\in L_{\Phi_*}(\Omega).
        $$
    \end{prop}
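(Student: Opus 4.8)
The plan is to prove the three displayed assertions in cascade, exactly as Proposition \ref{lema_naru} is proved for $\Phi$: the first one (a homogeneity/index bound for the Sobolev conjugate $\Phi_*$) is the substantive new ingredient, while the multiplicative bound and the modular--norm comparison then follow from it by the same integration and Luxemburg--norm manipulations used for $\Phi$. Accordingly, I would first reduce the whole statement to establishing the index bound
$$\ell^*\le\frac{t\Phi_*'(t)}{\Phi_*(t)}\le m^*,\qquad t>0,$$
and then harvest the remaining two displays from it.

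To obtain the index bound I would exploit that $\Phi_*=H^{-1}$, where $H(t)=\int_0^t\Phi^{-1}(s)\,s^{-(N+1)/N}\,ds$, so that $H'(t)=\Phi^{-1}(t)\,t^{-(N+1)/N}$. Since $\Phi_*$ is the inverse of $H$, the elementary fact that the logarithmic derivative of an inverse function is the reciprocal of that of the original gives, at $u=H(t)$,
$$\frac{u\,\Phi_*'(u)}{\Phi_*(u)}=\left(\frac{t\,H'(t)}{H(t)}\right)^{-1}=\frac{H(t)}{\Phi^{-1}(t)\,t^{-1/N}}.$$
Thus it suffices to sandwich $H(t)$ between the two constant multiples $\frac{\ell N}{N-\ell}\,\Phi^{-1}(t)\,t^{-1/N}$ and $\frac{mN}{N-m}\,\Phi^{-1}(t)\,t^{-1/N}$ of $\Phi^{-1}(t)\,t^{-1/N}$.

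The key estimate, and the main obstacle, is precisely this sandwiching. From Proposition \ref{lema_naru} (equivalently from $(\phi_3)$, $(\phi_3)'$) one has $\ell\le t\Phi'(t)/\Phi(t)\le m$, which upon inversion becomes $1/m\le t(\Phi^{-1})'(t)/\Phi^{-1}(t)\le 1/\ell$; equivalently, $s\mapsto\Phi^{-1}(s)\,s^{-1/m}$ is nondecreasing and $s\mapsto\Phi^{-1}(s)\,s^{-1/\ell}$ is nonincreasing. Inserting these monotonicities into the integral defining $H(t)$ bounds the integrand above and below by multiples of $s^{1/m-(N+1)/N}$ and $s^{1/\ell-(N+1)/N}$; these powers are integrable near $0$ exactly because $\ell,m<N$, and performing the integration yields
$$\frac{\ell N}{N-\ell}\,\Phi^{-1}(t)\,t^{-1/N}\le H(t)\le\frac{mN}{N-m}\,\Phi^{-1}(t)\,t^{-1/N}.$$
Dividing by $\Phi^{-1}(t)\,t^{-1/N}$ gives $1/m^*\le tH'(t)/H(t)\le 1/\ell^*$, i.e. the desired $\ell^*\le t\Phi_*'(t)/\Phi_*(t)\le m^*$. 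The delicate points are getting the integral constants to collapse exactly to $\ell^*$ and $m^*$ and verifying integrability at the origin, both of which hinge on $1<\ell,m<N$.

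Finally I would derive the last two displays from the index bound. Writing $\frac{d}{dt}\log\!\big(\Phi_*(t)/t^{\ell^*}\big)=\frac1t\big(t\Phi_*'(t)/\Phi_*(t)-\ell^*\big)\ge 0$ shows $\Phi_*(t)/t^{\ell^*}$ is nondecreasing and, symmetrically, $\Phi_*(t)/t^{m^*}$ is nonincreasing; comparing these ratios at $\rho t$ versus $\rho$ and separating the cases $t\ge 1$ and $t\le 1$ produces $\zeta_2(t)\Phi_*(\rho)\le\Phi_*(\rho t)\le\zeta_3(t)\Phi_*(\rho)$. For the modular--norm comparison I would apply this multiplicative bound pointwise with $\rho=u(x)/\|u\|_{\Phi_*}$ and $t=\|u\|_{\Phi_*}$ and integrate over $\Omega$; since the index bound forces $\Phi_*\in\Delta_2$ (indeed $\Phi_*(2t)\le 2^{m^*}\Phi_*(t)$), the Luxemburg infimum is attained, so that $\int_\Omega\Phi_*\big(u/\|u\|_{\Phi_*}\big)\,dx=1$, and the claimed inequalities $\zeta_2(\|u\|_{\Phi_*})\le\int_\Omega\Phi_*(u)\,dx\le\zeta_3(\|u\|_{\Phi_*})$ follow at once.
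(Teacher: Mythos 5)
Your proof is correct; the paper itself states this proposition without proof, citing \cite{Fuk_1,Fuk_2}, and your argument --- inverting the elasticity bound $\ell\le t\Phi'(t)/\Phi(t)\le m$ to get the monotonicity of $s\mapsto\Phi^{-1}(s)s^{-1/m}$ and $s\mapsto\Phi^{-1}(s)s^{-1/\ell}$, sandwiching $\Phi_*^{-1}(t)=\int_0^t\Phi^{-1}(s)s^{-(N+1)/N}\,ds$ so that the integration constants collapse exactly to $\ell^*$ and $m^*$ (using $1<\ell,m<N$ for integrability at the origin), and then harvesting the multiplicative and modular--norm inequalities as in Proposition \ref{lema_naru} --- is precisely the standard proof in those references. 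One cosmetic remark: the display $\ell^*\le t^2\Phi_*'(t)/\Phi_*(t)\le m^*$ in the statement should be read with $\Phi_*(t)=\int_0^t s\,\phi_*(s)\,ds$, i.e. as the bound $\ell^*\le t\Phi_*'(t)/\Phi_*(t)\le m^*$ on the elasticity, which is exactly what you prove.
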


\section{The problem \eqref{p1} for the reflexive case}

In this section we shall prove some an existence and uniquiness results for problem \eqref{p1} in the reflexive case using hypotheses $(\phi_1)-(\phi_2)$ and $(\phi_3)^\prime$. In other words, we shall consider hypotheses $(\phi_1)-(\phi_2)$ and $(\phi_3)^\prime$ where $\ell>1$.
Under these conditions it is well known that Orlicz and Orlicz-Sobolev spaces are Banach reflexive spaces. In this way, we shall develop some minimization arguments on Orlicz-Sobolev spaces for a specific functional. It is worthwhile to mention that \eqref{p1} have been considered by Gossez \cite{gossez-Czech}. Here we also refer the reader to Fukagai et al. \cite{Fuk_1,Fuk_2}. For the reader convenience we shall give here an alternative proof.

Consider the energy functional $I:\w\rightarrow\mathbf{R}$ given by
\begin{equation*}\label{energia}
I(u)=\displaystyle\int_\Omega\Phi(|\nabla u|)-fudx,~u\in\w.
\end{equation*}
\begin{prop}\label{prop_ltda_inf}
Suppose $(\phi_1)-(\phi_2),~(\phi_3)^\prime$ and $\ell>1$. Then the functional  $I$ is bounded from below.
\end{prop}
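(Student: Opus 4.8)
The plan is to show that the principal term $\int_\Omega\Phi(|\nabla u|)\,dx$ grows superlinearly in the norm $\|u\|=\|\nabla u\|_\Phi$, whereas the linear term $\int_\Omega fu\,dx$ is controlled only linearly; the assumption $\ell>1$ then forces the former to dominate and keeps $I$ bounded below (in fact coercive).

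First I would bound the gradient term from below. By Proposition \ref{lema_naru}, with $\zeta_0(t)=\min\{t^\ell,t^m\}$, one has
\begin{equation*}
\int_\Omega\Phi(|\nabla u|)\,dx\ \geq\ \zeta_0(\|u\|),\qquad u\in\w .
\end{equation*}
Since $\ell\le m$, for $\|u\|\geq 1$ this reads $\int_\Omega\Phi(|\nabla u|)\,dx\geq\|u\|^{\ell}$, while for $\|u\|\le 1$ the quantity is simply nonnegative.

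Next I would estimate the linear term. As $f\in L^N(\Omega)$, Hölder's inequality with the conjugate pair $(N,1^*)$, where $1^*=N/(N-1)$, gives $\bigl|\int_\Omega fu\,dx\bigr|\le\|f\|_{L^N}\,\|u\|_{L^{1^*}}$. Invoking the continuous embedding $\w\hookrightarrow L_{\Phi_*}(\Omega)$ together with $L_{\Phi_*}(\Omega)\hookrightarrow L^{\ell^*}(\Omega)$ (the analogue for $\Phi_*$ of the stated chain $L^m\hookrightarrow L_\Phi\hookrightarrow L^\ell$, justified by Proposition \ref{lema_naru_*}), and noting that $\ell>1$ yields $\ell^*=\ell N/(N-\ell)>N/(N-1)=1^*$ so that $L^{\ell^*}(\Omega)\hookrightarrow L^{1^*}(\Omega)$ on the bounded domain $\Omega$, I obtain $\|u\|_{L^{1^*}}\le C\|u\|$. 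Hence there is a constant $C_0:=C\|f\|_{L^N}>0$ with $\bigl|\int_\Omega fu\,dx\bigr|\le C_0\|u\|$.

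Combining the two estimates gives $I(u)\ge\zeta_0(\|u\|)-C_0\|u\|$. For $\|u\|\ge1$ this is $\|u\|^{\ell}-C_0\|u\|$, and since $\ell>1$ the real function $s\mapsto s^{\ell}-C_0 s$ tends to $+\infty$ and is therefore bounded below on $[1,\infty)$; for $\|u\|\le 1$ one has $I(u)\ge -C_0\|u\|\ge -C_0$. Thus $I$ is bounded from below, which is the claim. The only delicate point, and the step I expect to be the main obstacle, is the linear-term estimate, namely securing $\w\hookrightarrow L^{1^*}(\Omega)$ with a workable constant via the Orlicz--Sobolev embedding into $L_{\Phi_*}(\Omega)$ and the comparison $L_{\Phi_*}(\Omega)\hookrightarrow L^{\ell^*}(\Omega)$; once this is in place, the hypothesis $\ell>1$ is exactly what makes the superlinear term $\|u\|^{\ell}$ override the linear contribution, which is the crux of the argument.
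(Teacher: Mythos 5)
Your argument is correct, and it reaches the conclusion by a genuinely more elementary route than the paper's. The paper bounds the linear term through the chain $\w\hookrightarrow W^{1,1}_0(\Omega)\hookrightarrow L^{N/(N-1)}(\Omega)$ and then, instead of the polynomial lower bound you use, invokes Gossez's result that $\int_\Omega\Phi(|\nabla u|)dx/\|u\|\to\infty$ as $\|u\|\to\infty$; this settles the region $\|u\|>M$, and the remaining ball $B_M(0)$ is handled by weak compactness plus weak lower semicontinuity of $I$, which is where reflexivity is used a second time. You instead take the explicit estimate $\int_\Omega\Phi(|\nabla u|)dx\geq\zeta_0(\|u\|)$ from Proposition \ref{lema_naru} and finish with one-variable calculus: $s^\ell-C_0s$ is bounded below on $[1,\infty)$ precisely because $\ell>1$, and $\zeta_0\geq 0$ disposes of the small ball. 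This avoids the weak-compactness step entirely and in fact gives coercivity, which is cleaner than the paper's minimize-on-the-ball device. Two small remarks: Proposition \ref{lema_naru} is stated under $(\phi_3)$ rather than $(\phi_3)'$, but its conclusion only depends on the two-sided bound $\ell\leq t\Phi'(t)/\Phi(t)\leq m$, so your use of it is legitimate (the paper does the same in Proposition \ref{prop_min}); and your estimate $\|u\|_{L^{1^*}}\leq C\|u\|$ routed through $L_{\Phi_*}(\Omega)\hookrightarrow L^{\ell^*}(\Omega)$ is valid (it needs $\ell,m<N$ and Proposition \ref{lema_naru_*}) but heavier than necessary, since the embedding $\w\hookrightarrow W^{1,1}_0(\Omega)\hookrightarrow L^{N/(N-1)}(\Omega)$ used in the paper gives the same bound $\bigl|\int_\Omega fu\,dx\bigr|\leq C\|f\|_N\|u\|$ with less machinery.
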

\begin{proof}
Initially, we mention that H\"older's inequality and the embedding
$$\w\hookrightarrow W^{1,1}_0(\Omega)\hookrightarrow L^{\frac{N}{N-1}}(\Omega)$$
imply that
$$
\begin{array}{lll}
I(u) & \geq & \displaystyle\int_\Omega\Phi(|\nabla u|)dx-\|f\|_N\|u\|_{\frac{N}{N-1}} \\
& \geq & \displaystyle\int_\Omega\Phi(|\nabla u|)dx-C\|f\|_N\|u\|\\
& \geq & \|u\|\left(\displaystyle \frac{\displaystyle\int_\Omega\Phi(|\nabla u|)dx}{\| u\|}-C\|f\|_N\right),~u\in\w.
\end{array}
$$
According to Gossez \cite{Gz1}) there exists $M>0$ in such way that $\| u\|>M$ implies
$$\displaystyle \frac{\displaystyle\int_\Omega\Phi(|\nabla u|)dx}{\|\nabla u\|_\Phi}>(C+1)\|f\|_N.$$
As a consequence, we deduce that
$$I(u)\geq\|u\|\|f\|_N\geq M\|f\|_N.$$

On the other hand, using the fact that $I$ continuous and convex, we know that $I$ is weakly lower semicontinous, in short, we say that $I$ is w.l.s.c. Furthermore, the unit ball centered at the origin with radius $M > 0$ given by $B_M(0)=\{u\in\w:~\|u\|_\Phi\leq M\}$ is compact in the weak topology proving that $I$ admits a maximum point in $B_M(0)$. Let $u_0 \in \w$ be fixed such that $I(u_{0}) = \inf \{I(v), v \in B_{M}(0)\}$. Using the estimates discussed just above it follows that
$$I(u)\geq \min\{I(u_0), M\|f\|_N\}.$$
In particular, the functional $I$ is bounded from below. This ends the proof.
\end{proof}

\begin{prop}\label{prop_min}
	Suppose $(\phi_1)-(\phi_2),~(\phi_3)^\prime$ where $\ell>1$. Then there exists $u\in\w$ in such way that
	$$\displaystyle I(u)= \inf_{v\in\w}I(v).$$
\end{prop}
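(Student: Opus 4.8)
The plan is to establish the existence of a minimizer for the functional $I$ over the reflexive Orlicz-Sobolev space $\w$ via the direct method of the calculus of variations. By Proposition \ref{prop_ltda_inf} the functional $I$ is bounded from below, so the infimum $c:=\inf_{v\in\w}I(v)$ is a finite real number. First I would select a minimizing sequence $(u_n)\subset\w$, that is, a sequence satisfying $I(u_n)\to c$ as $n\to\infty$. The goal is to extract a subsequence converging (weakly) to some $u\in\w$ at which the infimum is attained.

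The next step is to show that the minimizing sequence is bounded in $\w$. Here I would reuse the coercivity estimate obtained inside the proof of Proposition \ref{prop_ltda_inf}: combining H\"older's inequality with the continuous embedding $\w\hookrightarrow L^{\frac{N}{N-1}}(\Omega)$ one has
\begin{equation*}
I(u_n)\geq \|u_n\|\left(\frac{\int_\Omega\Phi(|\nabla u_n|)dx}{\|u_n\|}-C\|f\|_N\right),
\end{equation*}
and the growth control on $\Phi$ furnished by Proposition \ref{lema_naru} (through $\zeta_0(\|u_n\|)\leq\int_\Omega\Phi(|\nabla u_n|)dx$, using $\ell>1$) forces $I(u_n)\to\infty$ whenever $\|u_n\|\to\infty$. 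Since $I(u_n)$ is convergent, hence bounded, this rules out an unbounded subsequence and yields $\sup_n\|u_n\|<\infty$.

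Once boundedness is secured, reflexivity of $\w$ (guaranteed by $1<\ell\leq m<N$, via Remark \ref{reflex}, under $(\phi_3)'$ with $\ell>1$) allows me to pass to a subsequence, still denoted $(u_n)$, with $u_n\rightharpoonup u$ weakly in $\w$ for some $u\in\w$. The final step is lower semicontinuity: as already noted in the proof of Proposition \ref{prop_ltda_inf}, the functional $I$ is continuous and convex, hence weakly lower semicontinuous, so that $I(u)\leq\liminf_{n\to\infty}I(u_n)=c$. Because $u\in\w$ we also have $I(u)\geq c$ by definition of the infimum, and therefore $I(u)=c=\inf_{v\in\w}I(v)$, which is exactly the claimed minimizer.

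The main obstacle I expect is the interplay between the Luxemburg norm and the modular $\int_\Omega\Phi(|\nabla u|)\,dx$: in Orlicz-Sobolev spaces these are not comparable by a simple power unless one invokes the $\Delta_2$-type estimates, so the coercivity argument must be routed carefully through Proposition \ref{lema_naru} rather than through a naive norm inequality, and the hypothesis $\ell>1$ is essential precisely so that $\zeta_0(t)=\min\{t^\ell,t^m\}$ grows superlinearly and dominates the linear term $C\|f\|_N\|u\|$. The weak lower semicontinuity, although stated as a consequence of convexity and continuity, is the other delicate point, since it relies on the fact that in the reflexive regime convex continuous functionals are sequentially weakly lower semicontinuous.
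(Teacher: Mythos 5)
Your proposal is correct and follows essentially the same route as the paper: the direct method, with coercivity of $I$ obtained from $\zeta_{0}(\|u_n\|)\leq\int_\Omega\Phi(|\nabla u_n|)\,dx$ and $\ell>1$, weak compactness from reflexivity, and weak lower semicontinuity of the convex continuous functional. The only cosmetic difference is that the paper treats the linear term $\int_\Omega f u_n\,dx$ separately via the compact embedding into $L^{N/(N-1)}(\Omega)$, whereas you absorb it into the w.l.s.c.\ of $I$ as a whole; both are valid.
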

\begin{proof}
Firstly, we observe that Proposition \ref{prop_ltda_inf} shows that $I$ is bounded from below. Let $(u_n)\subseteq\w$ be a minimizer sequence for $I$. Now we claim that $(u_n)$ is bounded in $\w$. In fact, using Proposition \ref{lema_naru}, there exists $n_0 \in \mathbb{N}$ in such way that for any $n>n_0$ implies that
$$
\begin{array}{lllll}
I_\infty+1 & \geq & I(u_n) & \geq & \min\left\{\|u_n\|^\ell,\|u_n\|^m\right\}-\|f\|_{W^{-1,\widetilde{\Phi}}}\|u_n\|,
\end{array}
$$
where we define $I_\infty := \inf_{v\in\w}I(v)$. Hence the sequence $(u_n)$ is now bounded. Using the fact that Orlicz-Sobolev space is a reflexive Banach space we deduce that
$$u_n\rightharpoonup u~~\mbox{em}~~\w.$$
Using the fact that $E: \w \rightarrow \mathbb{R}$ given by
$$E(z)=\int_\Omega\Phi(|\nabla z|)dx,~~z\in\w$$
is w.l.s.c we obtain the following estimates
$$
\begin{array}{lll}
I(u) & \leq & \displaystyle\liminf \int_\Omega\Phi(|\nabla u_n|)dx-\lim_{n\rightarrow\infty} \int_\Omega fu_ndx \leq  \displaystyle\liminf I(u_n) =  I_\infty,
\end{array}
$$
Here we have used the fact that $u_{n} \rightarrow u$ in $L^{N/(N -1)}(\Omega)$. This can be proven using the compact embedding $W^{1,\Phi}_{0}(\Omega) \subset L^{N/(N -1)}(\Omega)$. As a consequence we infer that
$$I(u)=\min_{v\in\w}I(v).$$
The proof for this proposition is now complete.
\end{proof}

\begin{prop}\label{teo_prin}
Suppose $(\phi_1)-(\phi_2),~(\phi_3)^\prime$ where $\ell>1$. Then the problem \eqref{p1} admits at least one weak solution $u\in\w$ which is given by a minimization for $I$ over $\w$. In particular, we have that
	$$\displaystyle\int_\Omega\phi(|\nabla u|)\nabla u\nabla vdx=\int_\Omega fvdx,~v\in\w.$$
\end{prop}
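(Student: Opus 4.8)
The plan is to show that the minimizer $u \in \w$ obtained in Proposition \ref{prop_min} is precisely the weak solution sought, by verifying that it is a critical point of $I$. First I would establish that $I$ is G\^ateaux differentiable on $\w$ with derivative
$$I'(u)v = \int_\Omega \phi(|\nabla u|)\nabla u \cdot \nabla v \, dx - \int_\Omega fv \, dx, \quad v \in \w.$$
Since $u$ minimizes $I$ over the whole space $\w$, the scalar function $t \mapsto I(u+tv)$ attains its minimum at $t=0$ for every fixed $v \in \w$, whence $\frac{d}{dt}I(u+tv)\big|_{t=0} = I'(u)v = 0$. Reading off this identity yields exactly the weak formulation
$$\int_\Omega \phi(|\nabla u|)\nabla u \cdot \nabla v \, dx = \int_\Omega fv \, dx, \quad v \in \w,$$
which is the assertion. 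Thus the entire content of the proposition reduces to the differentiability statement together with the minimality already furnished by Proposition \ref{prop_min}.

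The heart of the matter is therefore the computation of the G\^ateaux derivative, and in particular the justification of differentiation under the integral sign for the term $E(u) = \int_\Omega \Phi(|\nabla u|)\, dx$. For fixed $v$ and $t \in (0,1)$ the mean value theorem gives, pointwise a.e.,
$$\frac{\Phi(|\nabla u + t\nabla v|) - \Phi(|\nabla u|)}{t} = \phi(|\xi_t|)\,\xi_t \cdot \nabla v,$$
where $\xi_t = \nabla u + \theta\, t\, \nabla v$ for some $\theta = \theta(x) \in (0,1)$, using $\Phi'(s) = s\phi(s)$. As $t \to 0^+$ the right-hand side converges pointwise a.e. to $\phi(|\nabla u|)\nabla u \cdot \nabla v$. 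To pass the limit inside the integral by dominated convergence, I would produce an integrable majorant from the growth controlled by $(\phi_3)^\prime$ together with the Orlicz--H\"older inequality; the decisive fact is that the relation between $\Phi$ and its conjugate $\widetilde{\Phi}$ guarantees $\phi(|\nabla w|)|\nabla w| \in L_{\widetilde\Phi}(\Omega)$ whenever $\nabla w \in L_\Phi(\Omega)$, so that the products appearing in the difference quotient are dominated by an $L^1$ function uniform for small $t$. The linear term $-\int_\Omega fv\,dx$ is trivially differentiable by \eqref{cond-f} and the embedding $\w \hookrightarrow L^{N/(N-1)}(\Omega)$.

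The main obstacle I expect is precisely this domination step: because the operator is nonhomogeneous, one cannot reduce to a simple power bound, and the estimate must be carried out in the Orlicz scale. Here the hypothesis $\ell>1$ (equivalently, $\Phi,\widetilde\Phi\in\Delta_2$ by Remark \ref{reflex}) is \emph{essential}, as it keeps $\phi(|\nabla u|)\nabla u$ in $L_{\widetilde\Phi}(\Omega)$ and renders the Orlicz--H\"older pairing
$$\int_\Omega \left|\phi(|\nabla u|)\nabla u \cdot \nabla v\right|\,dx \leq 2\,\|\phi(|\nabla u|)|\nabla u|\|_{\widetilde\Phi}\,\|\nabla v\|_\Phi$$
finite, so that $I'(u)v$ is a well-defined continuous linear functional of $v$. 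Once the differentiability is secured, the conclusion follows at once from the minimality of $u$, and no further regularity of $u$ is needed to read off the weak formulation.
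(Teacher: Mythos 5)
Your proposal is correct and follows essentially the same route as the paper: the minimizer from Proposition \ref{prop_min} is shown to satisfy the weak formulation by computing the G\^ateaux derivative of $I$ via the mean value theorem, dominating the difference quotient by $\phi(|\nabla u|+|\nabla v|)(|\nabla u|+|\nabla v|)|\nabla v|$ (integrable by $\widetilde{\Phi}(t\phi(t))\leq\Phi(2t)$ and the Orlicz--H\"older inequality), and passing to the limit by dominated convergence. The only cosmetic difference is that the paper works with the one-sided inequality $I(u+tv)\geq I(u)$ for $t>0$ and then replaces $v$ by $-v$, whereas you invoke the vanishing of the derivative of $t\mapsto I(u+tv)$ at the interior minimum $t=0$; these are equivalent.
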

\begin{proof}

Let $u\in\w$ be a fixed minimizer given in Proposition \ref{prop_min}. Consider $0<t<1$ and the function $v\in\w$. Now we mention that
$$
\displaystyle\frac{I(u+tv)-I(u)}{t}\geq 0.
$$
In other words, we have been ensured that
\begin{equation}\label{derivada_phi}
\displaystyle \int_\Omega\frac{\Phi(|\nabla u+t\nabla v|)-\Phi(|\nabla u|)}{t}dx\geq\int_\Omega fvdx.
\end{equation}
Now we claim that
$$
\displaystyle \lim_{t\rightarrow 0_+}\int_\Omega\frac{\Phi(|\nabla u+t\nabla v|)-\Phi(|\nabla u|)}{t}dx=\int_\Omega\phi(|\nabla u|)\nabla u
\nabla vdx.
$$
Now we shall prove the claim just above. Using the mean value theorem we deduce that
\begin{equation}\label{el1}
\Phi(|\nabla u+t\nabla v|)-\Phi(|\nabla u|)=\phi(\theta_t)\theta_t\left[|\nabla u+t\nabla v|-|\nabla u|\right],
\end{equation}
where $\theta: \Omega \rightarrow \mathbb{R}, \theta(x) \in [0,1], x \in \Omega$ and
\begin{equation}\label{el2}
\begin{array}{lll}
-|\nabla u|-|\nabla v| & \leq & \min\{|\nabla u+t\nabla v|,|\nabla u|\} \\ \\
& \leq & \theta_t\\ \\
& \leq & \max\{|\nabla u+t\nabla v|,|\nabla u|\}\\ \\
& \leq & |\nabla u|+|\nabla v| \mbox{ q.t.p. em }\Omega.
\end{array}
\end{equation}
According to \eqref{el2} we observe that $\theta_t(x)\rightarrow |\nabla u(x)|$ as $t\rightarrow 0$  a.e. $x \in \Omega$. This fact together with $(\ref{el1})$ imply that
\begin{equation}\label{el3}
\displaystyle\lim_{t\rightarrow 0}\frac{\Phi(|\nabla u+t\nabla v|)-\Phi(|\nabla u|)}{t}=\phi(|\nabla u|)\nabla u\nabla v~ \,\,\mbox{a.e in}
x \, \in \,\Omega.
\end{equation}
In addition, we also mention that
\begin{equation}\label{a_1}
\begin{array}{lll}
\displaystyle\frac{\Phi(|\nabla u+t\nabla v|)-\Phi(|\nabla u|)}{t} &   =  & \displaystyle\phi(\theta_t)\theta_t\left[\frac{|\nabla
	u+t\nabla v|-|\nabla u|}{t}\right]\\
\\
& \leq &  \displaystyle\phi(\theta_t)\theta_t\left[\frac{|\nabla u|+t|\nabla v|-|\nabla u|}{t}\right]\\
\\
& =    &  \displaystyle\phi(\theta_t)\theta_t|\nabla v|\\
\\
& \leq & \phi(|\nabla u|+|\nabla v|)(|\nabla u|+|\nabla v|)|\nabla v|.
\end{array}
\end{equation}
Here we have used the fact that $t\phi(t)\geq 0$ and $ t \rightarrow t\phi(t)$ is increasing for $t\geq 0$. Furthermore, we know that
\begin{equation}\label{a_2}
\begin{array}{lll}
\displaystyle\frac{\Phi(|\nabla u+t\nabla v|)-\Phi(|\nabla u|)}{t} & \geq & \displaystyle\phi(\theta_t)\theta_t\left[\frac{|\nabla
	u|-t|\nabla v|-|\nabla u|}{t}\right]\\
\\
&   =  & \displaystyle-\phi(\theta_t)\theta_t|\nabla v|\\
\\
& \geq & -\phi(|\nabla u|+|\nabla v|)(|\nabla u|+|\nabla v|)|\nabla v|.
\end{array}
\end{equation}
As a consequence estimates $(\ref{a_1})$ and $(\ref{a_2})$ imply that
\begin{equation}\label{el4}
\left|\displaystyle\frac{\Phi(|\nabla u+t\nabla v|)-\Phi(|\nabla u|)}{t}\right|\leq\phi(|\nabla u|+|\nabla v|)(|\nabla u|+|\nabla v|)|\nabla
v|.
\end{equation}
Moreover, using the estimate $\widetilde{\Phi}(t\phi(t))\leq \Phi(2t)$ for any $t\in\R$, we mention that
$$\phi(|\nabla u|+|\nabla v|)(|\nabla u|+|\nabla v|)\in L_{\widetilde{\Phi}}(\Omega).$$
Now, due H\"older's inequality we see that
\begin{equation*}
\phi(|\nabla u|+|\nabla v|)(|\nabla u|+|\nabla v|) |\nabla v|\in L^1(\Omega).
\end{equation*}
At this moment using $(\ref{el3})$, $(\ref{el4})$ and Dominated convergence Theorem we deduce that
$$\displaystyle\lim_{t\rightarrow 0_+}\int_\Omega\frac{\Phi(|\nabla u+t\nabla v|)-\Phi(|\nabla u|)}{t}dx=\int_\Omega\phi(|\nabla u|)\nabla u
\nabla vdx.$$
As a consequence, taking the limit as $t \rightarrow 0$ in $(\ref{derivada_phi})$ we observe that
\begin{equation*}
\int_\Omega \phi(|\nabla u|) \nabla u \nabla v dx \geq \int_\Omega f v dx, v \in \w.
\end{equation*}
In the last estimate changing $v$ by $- v$ we also obtain that
\begin{equation*}
\int_\Omega \phi(|\nabla u|) \nabla u \nabla v dx = \int_\Omega f v dx, v \in \w.
\end{equation*}
The proof of this proposition is now complete.
\end{proof}

In what follows we denote the inner product in $\mathbb{R}^{N}$ by $( , )$. At this moment we would like to show that problem \eqref{p1} admits exactly one solution in $\w$ for the reflexive case. In order to achieve this purpose we shall consider some auxiliary results listed just below.

\begin{prop}\label{mono}
	Let $\phi:(0,\infty)\rightarrow(0,\infty)$ be a fixed N-function satisfying hypotheses $(\phi_1)$ and $(\phi_2)$. Then
\begin{equation*}
	(\phi(|x|)x-\phi(|y|)y,x-y)\geq 0,~\forall~x,y\in{\R}^n
	\end{equation*}
and
	\begin{equation*}
	(\phi(|x|)x-\phi(|y|)y,x-y)>0,~\forall~x,y\in{\R}^n,~x\neq y.
	\end{equation*}
\end{prop}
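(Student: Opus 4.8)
The plan is to reduce this vector inequality to a scalar monotonicity statement about the function $g(t):=t\phi(t)$, which by $(\phi_2)$ is strictly increasing on $(0,\infty)$ and, by part (i) of $(\phi_1)$, extends continuously to $[0,\infty)$ with $g(0)=0$ (so that $\phi(|x|)x$ is interpreted as $0$ at $x=0$). Writing $a=|x|$ and $b=|y|$, I would first expand the inner product as
\[
(\phi(|x|)x-\phi(|y|)y,\,x-y)=\phi(a)a^2+\phi(b)b^2-(\phi(a)+\phi(b))(x,y).
\]
Since $\phi>0$ on $(0,\infty)$, the coefficient $\phi(a)+\phi(b)$ is nonnegative, so the Cauchy--Schwarz estimate $(x,y)\le |x||y|=ab$ gives
\[
(\phi(|x|)x-\phi(|y|)y,\,x-y)\ge \phi(a)a^2+\phi(b)b^2-(\phi(a)+\phi(b))ab=(a-b)\big(g(a)-g(b)\big).
\]
Because $g$ is increasing, the right-hand side is a product of two factors of the same sign, hence nonnegative; this establishes the first inequality.

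For the strict inequality when $x\neq y$ I would split into two cases. If $a\neq b$, then the strict monotonicity of $g$ furnished by $(\phi_2)$ forces $(a-b)\big(g(a)-g(b)\big)>0$, so the displayed lower bound already yields strict positivity. The delicate case is $a=b=:r$, where the lower bound collapses to zero and must be handled directly. Since $x\neq y$ we necessarily have $r>0$, and the inner product simplifies to
\[
(\phi(r)x-\phi(r)y,\,x-y)=\phi(r)\,|x-y|^2,
\]
which is strictly positive because $\phi(r)>0$ and $|x-y|>0$.

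The main obstacle is precisely this equal-norm case: there the combined Cauchy--Schwarz/monotonicity bound is tight and degenerates to zero, so it cannot by itself deliver strictness, and one must instead observe that when $|x|=|y|$ the scalar factors $\phi(|x|)$ and $\phi(|y|)$ coincide, collapsing the expression to $\phi(r)|x-y|^2$. A secondary point to keep clean is the behaviour at the origin: since $\phi$ is only prescribed on $(0,\infty)$, I would rely throughout on the continuous extension $g(0)=0$ granted by $(\phi_1)$ and the convention $\phi(|x|)x=0$ at $x=0$, so that all estimates remain valid when one of the vectors vanishes.
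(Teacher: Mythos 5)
Your proof is correct and follows essentially the same route as the paper: both expand the inner product, apply Cauchy--Schwarz to reduce to the scalar bound $(|x|-|y|)\bigl(\phi(|x|)|x|-\phi(|y|)|y|\bigr)$, invoke the strict monotonicity of $t\mapsto t\phi(t)$ from $(\phi_2)$ when $|x|\neq|y|$, and treat the equal-norm case separately via $\phi(|x|)|x-y|^2$. Your explicit handling of the degenerate case $x=0$ or $y=0$ via the continuous extension $t\phi(t)\to 0$ from $(\phi_1)$ is a small point of care the paper leaves implicit.
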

\begin{proof}
Firstly, we shall split the proof into three parts. In the first one we put $x,y\in\R^N$ in such way that $|x| =|y|$. In this case we easily see that
\begin{equation*}
(\phi(|x|)x-\phi(|y|)y,x-y) = \phi(|x|) |x - y|^{2} \geq 0, x,y \in \mathbb{R}^{N}, |x| = |y|.
\end{equation*}
This estimate proves the proposition in the first part. In the second part we shall consider $x, y \in \mathbb{R}^{N}$ in such way that $|x|<|y|$.
Thanks to hypothesis $(\phi_{2})$ we mention that
	\begin{equation*}\label{rela-monotonica}
	\begin{array}{lll}
	(\phi(|x|)x-\phi(|y|)y,x-y) & \geq & \phi(|x|)|x|(|x|-|y|)+\phi(|y|)|y|(|y|-|x|)\\
	& = & (\phi(|x|)|x|-\phi(|y|)|y|)(|x|-|y|)> 0
	\end{array}
	\end{equation*}
This ends the proof in the second case. In the last part we shall consider $x, y \in \mathbb{R}^{N}$ in such way that $|x| > |y|$. Using the same ideas discussed just above we conclude one more time that
\begin{equation*}\label{rela-monotonica2}
	\begin{array}{lll}
	(\phi(|x|)x-\phi(|y|)y,x-y) & \geq & \phi(|x|)|x|(|x|-|y|)+\phi(|y|)|y|(|y|-|x|)\\
	& = & (\phi(|x|)|x|-\phi(|y|)|y|)(|x|-|y|)> 0.
	\end{array}
	\end{equation*}
This finishes the proof for this proposition.
\end{proof}

\begin{prop}\label{L_mono}
	Suppose $(\phi_1)-(\phi_2)$. Then we have that
	$$\int_{\Omega} (\phi(|\nabla u|)\nabla u-\phi(|\nabla v|)\nabla v)(\nabla u-\nabla v )dx> 0,~u,v\in\w,~u\neq v.$$
\end{prop}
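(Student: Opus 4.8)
The plan is to reduce the integral inequality to the pointwise monotonicity already established in Proposition \ref{mono}, applied with $x=\nabla u(x)$ and $y=\nabla v(x)$ for a.e.\ $x\in\Omega$. This immediately yields that the integrand
$$
h(x):=(\phi(|\nabla u(x)|)\nabla u(x)-\phi(|\nabla v(x)|)\nabla v(x),\,\nabla u(x)-\nabla v(x))
$$
is nonnegative a.e.\ in $\Omega$, and moreover strictly positive at every point where $\nabla u(x)\neq\nabla v(x)$. So the entire content of the proposition will be the passage from this pointwise information to strict positivity of the integral.

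Before that, I would record that $h\in L^1(\Omega)$, so that the integral is well defined. Here I would reuse the estimate $\widetilde{\Phi}(t\phi(t))\leq\Phi(2t)$ already invoked in the proof of Proposition \ref{teo_prin}: it shows that $\phi(|\nabla u|)\nabla u$ and $\phi(|\nabla v|)\nabla v$ both lie in $L_{\widetilde{\Phi}}(\Omega)$, while $\nabla u,\nabla v\in L_\Phi(\Omega)$. Expanding the inner product defining $h$ into four scalar products and applying H\"older's inequality in Orlicz spaces to each term then guarantees $h\in L^1(\Omega)$.

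The key step, and the only genuine subtlety, is upgrading nonnegativity of $\int_\Omega h\,dx$ to strict positivity under the hypothesis $u\neq v$. Since the norm on $\w$ is $\|u\|=\|\nabla u\|_\Phi$, the condition $u\neq v$ forces $\nabla u\neq\nabla v$ on a set $A\subset\Omega$ of positive Lebesgue measure; otherwise $\nabla(u-v)=0$ a.e., whence $\|u-v\|=0$ and $u=v$, a contradiction. On $A$ the integrand $h$ is strictly positive by Proposition \ref{mono}. Writing $A=\bigcup_{n\geq 1}A_n$ with $A_n:=\{x\in A:\ h(x)>1/n\}$, at least one $A_n$ must have positive measure, and therefore
$$
\int_\Omega h\,dx\ \geq\ \int_{A}h\,dx\ \geq\ \int_{A_n}h\,dx\ \geq\ \frac{|A_n|}{n}\ >\ 0,
$$
which is exactly the claimed strict inequality. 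The routine part is the integrability and the pointwise application of Proposition \ref{mono}; the part deserving care is precisely this measure-theoretic strict-positivity argument, since the pointwise inequality alone only delivers $\int_\Omega h\,dx\geq 0$.
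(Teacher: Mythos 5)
Your proof is correct and follows essentially the same route as the paper: apply the pointwise strict monotonicity of Proposition \ref{mono} with $x=\nabla u(x)$, $y=\nabla v(x)$, observe that $u\neq v$ forces the gradients to differ on a set of positive measure, and conclude strict positivity of the integral. You merely spell out two details the paper leaves implicit (the $L^1$ integrability of the integrand via $\widetilde{\Phi}(t\phi(t))\leq\Phi(2t)$ and the decomposition $A=\bigcup_n A_n$ to pass from pointwise positivity to a positive integral), which is a welcome but not substantively different elaboration.
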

\begin{proof}
Let $u,v\in\w$ be fixed functions in such way that $u \neq v$. Using Proposition \ref{mono} we deduce that
$$(\phi(|\nabla u|)\nabla u-\phi(|\nabla v|)\nabla v)(\nabla u-\nabla v )\geq 0~~\mbox{a.e in}~~\Omega.$$
Using the fact that $u\neq v$ there exits $\Omega_0\subset\Omega$ with positive Lebesgue measure such that
$$(\phi(|\nabla u|)\nabla u-\phi(|\nabla v|)\nabla v)(\nabla u-\nabla v )> 0~~\mbox{a.e in}~~\Omega_0.$$
As a consequence we obtain
$$\int_{\Omega_0} (\phi(|\nabla u|)\nabla u-\phi(|\nabla v|)\nabla v)(\nabla u-\nabla v )dx > 0.$$
The last estimate implies that
$$\int_{\Omega} (\phi(|\nabla u|)\nabla u-\phi(|\nabla v|)\nabla v)(\nabla u-\nabla v )dx > 0.$$
This ends the proof for this proposition.
\end{proof}

\begin{prop}\label{unic}
	Suppose $(\phi_1)-(\phi_2)$. Then the problem \eqref{p1} admits at most one solution in $\w$.
\end{prop}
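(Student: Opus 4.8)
The plan is to prove uniqueness directly, by exploiting the strict monotonicity of the operator already isolated in Proposition \ref{L_mono}. Suppose, for contradiction, that problem \eqref{p1} admits two distinct weak solutions $u_1, u_2 \in \w$ with $u_1 \neq u_2$. By the definition of weak solution, each of them satisfies \eqref{sol-fraca2} against every test function; that is,
\begin{equation*}
\int_\Omega \phi(|\nabla u_1|)\nabla u_1 \nabla w\, dx = \int_\Omega f w\, dx \quad\text{and}\quad \int_\Omega \phi(|\nabla u_2|)\nabla u_2 \nabla w\, dx = \int_\Omega f w\, dx,
\end{equation*}
for all $w \in \w$. Since the right-hand sides coincide, subtracting the two identities eliminates $f$ entirely and yields
\begin{equation*}
\int_\Omega \bigl(\phi(|\nabla u_1|)\nabla u_1 - \phi(|\nabla u_2|)\nabla u_2\bigr)\nabla w\, dx = 0, \quad w \in \w.
\end{equation*}

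The next step is to make a clever choice of test function. Because $\w$ is a linear space, the difference $w := u_1 - u_2$ belongs to $\w$ and is therefore admissible. Substituting this particular $w$ into the identity above gives
\begin{equation*}
\int_\Omega \bigl(\phi(|\nabla u_1|)\nabla u_1 - \phi(|\nabla u_2|)\nabla u_2\bigr)(\nabla u_1 - \nabla u_2)\, dx = 0.
\end{equation*}
On the other hand, since $u_1 \neq u_2$, Proposition \ref{L_mono} applies and asserts that this very integral is \emph{strictly} positive. The two statements are incompatible, which forces the contradiction and hence $u_1 = u_2$.

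I expect no serious obstacle here: the strict monotonicity encapsulated in Proposition \ref{L_mono} carries the entire argument, and the only point requiring a moment's care is the admissibility of $u_1 - u_2$ as a test function, which is immediate from the vector-space structure of $\w$. If one prefers to avoid argument by contradiction, the same computation can be phrased directly: for any two solutions the monotonicity integral vanishes, and by the contrapositive of Proposition \ref{L_mono} a vanishing integral precludes $u_1 \neq u_2$, so equality holds. Either phrasing closes the proof in a single short step.
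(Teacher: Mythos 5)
Your argument is correct and is exactly the one the paper intends: the authors simply state that the result ``follows immediately from Proposition \ref{L_mono}'' and omit the details, which are precisely the subtraction of the two weak formulations and the choice of test function $u_1 - u_2$ that you spell out. No difference in approach; your version just makes the omitted step explicit.
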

\begin{proof}
The proof for this proposition follows immediately from Proposition \ref{L_mono}. We omit the details.
 \end{proof}

\begin{theorem}\label{thaux}
	Suppose $(\phi_1)-(\phi_2),~(\phi_3)^\prime$ and $\ell>1$. Then problem \eqref{p1} admits at least exactly one solution $u \in \w$.
\end{theorem}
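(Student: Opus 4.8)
The plan is to obtain the theorem as an immediate synthesis of the existence and uniqueness results already established in this section, since the two halves of the assertion (``at least'' and ``exactly'') have been prepared separately. For existence I would invoke Proposition \ref{teo_prin}: under $(\phi_1)-(\phi_2)$, $(\phi_3)^\prime$ and $\ell>1$, the energy functional $I$ is bounded from below (Proposition \ref{prop_ltda_inf}) and attains its infimum at some $u\in\w$ (Proposition \ref{prop_min}), and this minimizer is shown in Proposition \ref{teo_prin} to be a weak solution of \eqref{p1}, namely
\begin{equation*}
\int_\Omega\phi(|\nabla u|)\nabla u\nabla v\,dx=\int_\Omega fv\,dx,\quad v\in\w.
\end{equation*}
This already guarantees the existence of at least one solution.

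For uniqueness I would appeal to Proposition \ref{unic}, which requires only $(\phi_1)-(\phi_2)$ and asserts that \eqref{p1} admits at most one solution in $\w$. The mechanism behind it is the strict monotonicity of the operator recorded in Proposition \ref{L_mono}: if $u_1,u_2\in\w$ were two distinct solutions, subtracting their weak formulations and testing the difference against $v=u_1-u_2$ would force
\begin{equation*}
\int_{\Omega}\bigl(\phi(|\nabla u_1|)\nabla u_1-\phi(|\nabla u_2|)\nabla u_2\bigr)(\nabla u_1-\nabla u_2)\,dx=0,
\end{equation*}
contradicting the strict positivity of that integral whenever $u_1\neq u_2$. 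Hence the solution is unique.

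Combining the two facts yields exactly one weak solution, which proves the theorem. It is worth emphasizing the asymmetry in the hypotheses: existence genuinely needs $(\phi_3)^\prime$ together with $\ell>1$, because these are what place us in the reflexive Orlicz--Sobolev setting where the weak lower semicontinuity of $E(z)=\int_\Omega\Phi(|\nabla z|)\,dx$ and the extraction of a weakly convergent minimizing subsequence are available, whereas uniqueness rests solely on the structural monotonicity encoded in $(\phi_1)-(\phi_2)$. I do not anticipate any genuine obstacle here, since both components are already proved; the only point to state cleanly is that the hypotheses of Theorem \ref{thaux} simultaneously imply the hypotheses of Proposition \ref{teo_prin} and of Proposition \ref{unic}, so that the two conclusions may be merged without further work.
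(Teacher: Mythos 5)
Your proposal is correct and follows exactly the paper's own route: the paper proves Theorem \ref{thaux} by combining Proposition \ref{teo_prin} (existence via minimization) with Proposition \ref{unic} (uniqueness via the strict monotonicity of Proposition \ref{L_mono}). Your additional remarks on which hypotheses drive which half are accurate and merely make explicit what the paper leaves implicit.
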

\begin{proof}
The proof follows using Proposition \ref{teo_prin} and Proposition \ref{unic}.
\end{proof}

\section{Problem \eqref{p1} for the nonreflexive case}
In this section we shall prove some useful results in order to ensure existence of solution for problem \eqref{p1} in the nonreflexive case.
The first result in this direction is to consider a sequence of approximated quasilinear elliptic problems given by
\begin{equation}\label{aux}
\left\{\
\begin{array}{l}
\displaystyle-\Delta_\Phi{_{\epsilon}} u = f(x),~\mbox{in}~\Omega,\\ \\
u=0~\mbox{on}~\partial \Omega
\end{array}
\right.
\end{equation}
where $f \in L^{N}(\Omega)$ and $\Phi_\epsilon(t):=\Phi(t)+\frac \epsilon m t^m,~t\in\mathbb{R}, \epsilon > 0$. It is important to remember  that
$v \in W^{1,\Phi_{\epsilon}}_{0}(\Omega)$ is a weak solution for the problem \eqref{aux} when
\begin{equation}\label{test}
\displaystyle \epsilon \int_{\Omega} |\nabla v|^{m-2} \nabla v \nabla w dx + \int_{\Omega} \phi(|\nabla v|) \nabla v \nabla w dx = \int_{\Omega} f w dx
\end{equation}
holds true for any  $w \in  W^{1,\Phi_{\epsilon}}_{0}(\Omega)$. The main properties for the functions $\Phi_\epsilon$ can be read as

\begin{lem}\label{ell-epsilon}
	Suppose $(\phi_1)-(\phi_4)$. Then the function $\Phi_\epsilon(t)$ satisfies the following properties
	\begin{itemize}
		\item[(i)] $\Phi_\epsilon\rightarrow \Phi$ as $\epsilon\rightarrow0$;
		\item[(ii)] $\displaystyle 1 < \ell_\epsilon \leq \frac{\Phi_\epsilon^\prime(t)t}{\Phi_\epsilon(t)}\leq m,~t>0$;
		\item[(iii)] $\ell_\epsilon\rightarrow1$ as $\epsilon\rightarrow0$;
		\item[(iv)] $\Phi_\epsilon$ is equivalent to the N-function $t^m$.
	\end{itemize}
\end{lem}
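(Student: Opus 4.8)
The plan is to verify the four items directly from the explicit formula $\Phi_\epsilon(t)=\Phi(t)+\frac{\epsilon}{m}t^m$, whose derivative is $\Phi_\epsilon'(t)=\Phi'(t)+\epsilon t^{m-1}=t\phi(t)+\epsilon t^{m-1}$, using the index bounds $(\phi_3)^\prime$ (namely $1\le\ell\le \frac{t^2\phi(t)}{\Phi(t)}\le m$, which follows from $(\phi_3)$) together with the growth control furnished by $(\phi_4)$. Items (i) and (iv) are the soft ones. For (i), the difference $\Phi_\epsilon(t)-\Phi(t)=\frac{\epsilon}{m}t^m$ tends to $0$ as $\epsilon\to 0$, uniformly on bounded sets, so $\Phi_\epsilon\to\Phi$. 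For (iv), since $\Phi\ge 0$ I get $\Phi_\epsilon(t)\ge\frac{\epsilon}{m}t^m$, while $(\phi_4)$ gives $\Phi(t)\le\frac1a t^m$ and hence $\Phi_\epsilon(t)\le\left(\frac1a+\frac{\epsilon}{m}\right)t^m$; these sandwich $\Phi_\epsilon$ between two positive multiples of $t^m$, which is the required equivalence (and also shows $\Phi_\epsilon$ is a genuine N-function, being the sum of the N-functions $\Phi$ and $\frac{\epsilon}{m}t^m$).

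For item (ii) I would rewrite the quotient as a convex combination. Setting $r(t):=\frac{t^2\phi(t)}{\Phi(t)}$ and $s(t):=\frac{\epsilon}{m}\frac{t^m}{\Phi(t)}$, a direct computation gives
\[
\frac{\Phi_\epsilon'(t)t}{\Phi_\epsilon(t)}=\frac{t^2\phi(t)+\epsilon t^m}{\Phi(t)+\frac{\epsilon}{m}t^m}=\frac{r(t)+m\,s(t)}{1+s(t)}=r(t)+\big(m-r(t)\big)\frac{s(t)}{1+s(t)}.
\]
The upper bound is immediate: since $r(t)\le m$ and $0\le\frac{s(t)}{1+s(t)}\le 1$, the last expression is $\le r(t)+(m-r(t))=m$. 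For the lower bound I use $r(t)\ge\ell=1$ together with $(\phi_4)$, which gives $\frac{t^m}{\Phi(t)}\ge a$, hence $s(t)\ge\frac{\epsilon a}{m}$; since $x\mapsto\frac{x}{1+x}$ is increasing, $\frac{s(t)}{1+s(t)}\ge\frac{\epsilon a}{m+\epsilon a}$, and plugging in $r(t)\ge 1$ yields
\[
\frac{\Phi_\epsilon'(t)t}{\Phi_\epsilon(t)}\ge 1+(m-1)\frac{\epsilon a}{m+\epsilon a}=:\ell_\epsilon,
\]
which is $>1$ because $m>1$ and $a,\epsilon>0$. (In the reflexive regime $\ell>1$ one replaces the $1$'s by $\ell$ and obtains $\ell_\epsilon=\ell+(m-\ell)\frac{\epsilon a}{m+\epsilon a}$; here we are in the nonreflexive case $\ell=1$.)

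Item (iii) then follows at once from the explicit form of $\ell_\epsilon$: since $\frac{\epsilon a}{m+\epsilon a}\to 0$ as $\epsilon\to 0$, we get $\ell_\epsilon=1+(m-1)\frac{\epsilon a}{m+\epsilon a}\to 1$. I expect the only genuinely delicate point to be the lower bound in (ii): the perturbation $\frac{\epsilon}{m}t^m$ is exactly what lifts the lower index strictly above $1$ when $\ell=1$, and the estimate must be \emph{uniform} in $t\in(0,\infty)$. This uniformity is what $(\phi_4)$ buys, by bounding $t^m/\Phi(t)$ from below simultaneously for small and large $t$; without it the weight $\frac{s(t)}{1+s(t)}$ could approach $0$ and $\ell_\epsilon$ could not be separated from $1$. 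If one instead prefers to define $\ell_\epsilon:=\inf_{t>0}\frac{\Phi_\epsilon'(t)t}{\Phi_\epsilon(t)}$, the same two inequalities give $\ell_\epsilon\ge 1$, while choosing $t_\delta$ with $r(t_\delta)<1+\delta$ and letting $\epsilon\to 0$ (so that $s(t_\delta)\to 0$ for fixed $t_\delta$) yields $\limsup_{\epsilon\to 0}\ell_\epsilon\le 1$, again giving (iii).
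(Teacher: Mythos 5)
Your proof is correct and follows essentially the same route as the paper: both rewrite $\frac{\Phi_\epsilon'(t)t}{\Phi_\epsilon(t)}$ as $\frac{r(t)+m s(t)}{1+s(t)}$ with $r(t)=\frac{t^2\phi(t)}{\Phi(t)}$, bound $r$ by $(\phi_3)$, control $s$ via $(\phi_4)$, and arrive at the identical constant $\ell_\epsilon=1+\frac{(m-1)\epsilon a}{\epsilon a+m}$. The only cosmetic difference is that the paper obtains the lower bound by showing $t\mapsto t^m/\Phi(t)$ is nondecreasing and taking its limit at $0$ to reach $a$, whereas you use the infimum in $(\phi_4)$ directly (and your item (iv) bound $\Phi_\epsilon(t)\le(\frac1a+\frac{\epsilon}{m})t^m$ holds for all $t>0$ rather than only $t\ge 1$), which is if anything slightly cleaner.
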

\begin{proof}
First of all, we mention that $\Phi$ and $t\mapsto \frac\epsilon mt^m$ are N-function. Hence $\Phi_\epsilon$ is also a N-function. Moreover, we easily see that the limit in $(i)$ is verified.

At this moment we shall prove the item $(ii)$. Taking into account $(\phi_3)$ we have that
\begin{equation}\label{Phi}
	1\leq \frac{\phi(t)t^2}{\Phi(t)}\leq m, t > 0.
\end{equation}
As a consequence, we infer that
\begin{equation*}
\displaystyle\frac{\Phi^\prime_\epsilon(t)t}{\Phi_\epsilon(t)} = \frac{\epsilon t^m+\phi(t)t^2}{\Phi(t)+\frac\epsilon m t^m} = \frac{\epsilon \frac{t^m}{\Phi(t)}+\frac{\phi(t)t^2}{\Phi(t)}}{1+\frac \epsilon m \frac{t^m}{\Phi(t)}} \leq  \frac{\epsilon \frac{t^m}{\Phi(t)}+m}{1+\frac\epsilon m \frac{t^m}{\Phi(t)}}=m.
\end{equation*}
On the other hand, using one more time \eqref{Phi}, we also have that
\begin{eqnarray}\label{est-inf}
	\frac{\Phi^\prime_\epsilon(t)t}{\Phi_\epsilon(t)} = \frac{\epsilon \frac{t^m}{\Phi(t)}+\frac{\phi(t)t^2}{\Phi(t)}}{1+\frac\epsilon m \frac{t^m}{\Phi(t)}}\geq \frac{\epsilon \frac{t^m}{\Phi(t)}+1}{1+\frac\epsilon m \frac{t^m}{\Phi(t)}}=h\left(\frac{t^m}{\Phi(t)}\right),
\end{eqnarray}
where we define
$$h(s):=\frac{\epsilon s+1}{\frac\epsilon m s +1}.$$
It is easy to see that $h$ is increasing. Furthermore, we observe that
$$\frac{d}{dt}\left(\frac{t^m}{\Phi(t)}\right)=\frac{t^{m-1}}{\Phi(t)}\left(m-\frac{\phi(t)t^2}{\Phi(t)}\right)\geq 0.$$
As a product we obtain $\displaystyle t\mapsto \frac{t^m}{\Phi(t)}$ is nondecreasing. Hence the function $t\mapsto h\left(\frac{t^m}{\Phi(t)}\right)$ is also nondecreasing. As a consequence, using the estimate \eqref{est-inf}, we have been proven that
$$\frac{\Phi^\prime_\epsilon(t)t}{\Phi_\epsilon(t)}\geq\lim_{t\rightarrow 0}h\left(\frac{t^m}{\Phi(t)}\right)=\frac{\epsilon a+1}{\frac \epsilon m a+1}=1+\frac{(m-1)\epsilon a}{\epsilon a+m}=:\ell_\epsilon>1.$$
Here we have used the fact that $\Phi$ is a N-function showing that $m>1$. According to the last estimate we see that
$$\lim_{\epsilon\rightarrow 0+}\ell_\epsilon= 1.$$
This ends the proof for the item $(iii).$ Moreover, using Proposition \ref{lema_naru}, we infer that
$$\frac \epsilon m t^m\leq \Phi_\epsilon(t)\leq (\Phi(1)+\frac\epsilon m)t^m,~t\geq 1.$$
So that the proof of item $iv)$ is now achieved. This finishes the proof of this proposition.
\end{proof}

In what follows we shall consider the approximated elliptic problem \eqref{aux} that admits exactly one solution $u_{\epsilon}$ for any $\epsilon > 0$. This fact is verified thanks the inequality $\ell_{\epsilon}  > 1$ which implies that $W^{1,\Phi_{\epsilon}}_{0}(\Omega)$ is a reflexive Banach space, see Proposition \ref{teo_prin} and Proposition \ref{unic} and Theorem \ref{thaux}. In the nonreflexive case where $\ell = 1$ we shall consider some powerful results in order to get a weak solution for the problem \eqref{p1}. Initially, we shall use the approximate problem \eqref{aux}
obtained a sequence of bounded weak solutions $(u_{\epsilon}) \in W^{1, \Phi_{\epsilon}}_{0}(\Omega)$. So that we shall consider the following result

\begin{prop}\label{pr1}
Suppose $(\phi_{1})- (\phi_{3})$ where $\ell = 1$. Then the sequence $(u_{\epsilon})\subset L^{\infty}(\Omega)$ is bounded in $\w$ and $W^{1,1}_{0}(\Omega)$.
\end{prop}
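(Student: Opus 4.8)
The plan is to test the weak formulation \eqref{test} of the approximate problem \eqref{aux} against $w=u_\epsilon$ itself, which is admissible since $u_\epsilon\in W^{1,\Phi_\epsilon}_0(\Omega)$. This yields the energy identity
$$\epsilon\int_\Omega|\nabla u_\epsilon|^m\,dx+\int_\Omega\phi(|\nabla u_\epsilon|)|\nabla u_\epsilon|^2\,dx=\int_\Omega fu_\epsilon\,dx.$$
Discarding the nonnegative first term and writing $B_\epsilon:=\int_\Omega\phi(|\nabla u_\epsilon|)|\nabla u_\epsilon|^2\,dx$, I would bound the right-hand side by H\"older's inequality together with the Sobolev embedding $W^{1,1}_0(\Omega)\hookrightarrow L^{N/(N-1)}(\Omega)$, obtaining $B_\epsilon\leq\|f\|_N\|u_\epsilon\|_{N/(N-1)}\leq C\|f\|_N\int_\Omega|\nabla u_\epsilon|\,dx$ with $C$ independent of $\epsilon$ (the relevant constants are those of $W^{1,1}_0(\Omega)$, so nothing degenerates as $\epsilon\to0$).

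The crucial step, and the one I expect to be the main obstacle, is to absorb $\int_\Omega|\nabla u_\epsilon|\,dx$ back into $B_\epsilon$. The naive lower bound $\Phi(t)\leq\phi(t)t^2$ coming from $(\phi_3)^\prime$ with $\ell=1$ gives only linear control, which matches the growth of the forcing term and fails to close the estimate; this is precisely the difficulty created by the nonreflexive regime $\ell=1$. Instead I would exploit the genuine superlinearity contained in $(\phi_1)$: since $t\phi(t)\to\infty$, for every $M>0$ there is $R_M>0$ with $t\phi(t)\geq M$ whenever $t\geq R_M$. Splitting $\Omega$ according to whether $|\nabla u_\epsilon|$ is below or above $R_M$ then gives
$$\int_\Omega|\nabla u_\epsilon|\,dx\leq R_M|\Omega|+\frac{1}{M}\int_{\{|\nabla u_\epsilon|\geq R_M\}}|\nabla u_\epsilon|\,|\nabla u_\epsilon|\phi(|\nabla u_\epsilon|)\,dx\leq R_M|\Omega|+\frac{1}{M}B_\epsilon.$$

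Combining the two displays yields $B_\epsilon\leq C\|f\|_N\bigl(R_M|\Omega|+B_\epsilon/M\bigr)$. Choosing $M>C\|f\|_N$ (which is possible because $M$ is free while $C$ and $\|f\|_N$ are fixed, so no smallness of $f$ is required) lets me absorb the term $B_\epsilon/M$ and conclude that $B_\epsilon$ is bounded by a constant depending only on $f,\Omega,\phi$ and $M$, uniformly in $\epsilon$. The $W^{1,1}_0(\Omega)$ bound then follows at once from the splitting inequality, and the $\w$ bound follows by observing that $\int_\Omega\Phi(|\nabla u_\epsilon|)\,dx\leq B_\epsilon$ (again via $(\phi_3)^\prime$ with $\ell=1$) and invoking Proposition \ref{lema_naru}, whose estimate $\zeta_0(\|u_\epsilon\|)\leq\int_\Omega\Phi(|\nabla u_\epsilon|)\,dx$ forces $\|u_\epsilon\|$ to remain bounded since $\zeta_0(t)=\min\{t,t^m\}\to\infty$. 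The inclusion $(u_\epsilon)\subset L^\infty(\Omega)$ is not needed for either bound; it is supplied separately by applying the regularity part of Theorem \ref{th1} to each $u_\epsilon$, which is legitimate because $\ell_\epsilon>1$ and $\Phi_\epsilon$ is equivalent to $t^m$ by Lemma \ref{ell-epsilon}(iv).
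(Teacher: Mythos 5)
Your proposal is correct and follows essentially the same route as the paper: test the weak formulation against $u_\epsilon$, drop the nonnegative $\epsilon$-term, control $\int_\Omega f u_\epsilon\,dx$ via H\"older and the $\epsilon$-independent embedding $W^{1,1}_0(\Omega)\hookrightarrow L^{N/(N-1)}(\Omega)$, split the set where $|\nabla u_\epsilon|$ exceeds a level at which $t\phi(t)$ is large, absorb, and finish with $\Phi(t)\leq\phi(t)t^{2}$ and Proposition \ref{lema_naru}. The only cosmetic difference is that you absorb into $B_\epsilon$ while the paper absorbs into $\|u_\epsilon\|_{W^{1,1}_0(\Omega)}$, which is the same pair of inequalities rearranged.
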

\begin{proof}
Let $u_{\epsilon}$ be the unique solution for the auxiliary problem \eqref{aux} which is given by Theorem \ref{thaux}. Note that $u_{\epsilon}$ is in $\w \cap L^{\infty}(\Omega)$ for any $\epsilon > 0$, see Theorem \ref{taux}. Now, using the results discussed in Section 2 for Orlicz and Orlicz-Sobolev spaces, we mention that the following embedding are continuous  $W^{1,m}_{0}(\Omega) \hookrightarrow W^{1, \Phi_{\epsilon}}_{0}(\Omega) \hookrightarrow W^{1, \ell_{\epsilon}}_{0}(\Omega)$ (cf. \cite{clement}) and $W^{1, \Phi_{\epsilon}}_{0}(\Omega) \hookrightarrow W^{1,1}_{0}(\Omega)$ (cf. \cite{A}).

On the other hand, we observe that
\begin{equation*}
\Phi(t), \frac\epsilon m t^{m} \leq \Phi_{\epsilon}(t), t \geq 0.
\end{equation*}
As a product $\Phi , \frac \epsilon m t^{m} \prec \Phi_{\epsilon}$ proving that $L^{\Phi_{\epsilon}}(\Omega) \hookrightarrow L^{\Phi}(\Omega)$  and
$L^{\Phi_{\epsilon}}(\Omega) \hookrightarrow L^{m}(\Omega)$. Furthermore, we infer that
$W^{1,\Phi_{\epsilon}}_{0}(\Omega) \hookrightarrow \w$ and  $W^{1,\Phi_{\epsilon}}_{0}(\Omega) \hookrightarrow W^{1,m}_{0}(\Omega)$.
As a consequence the last embedding says also that $W^{1,\Phi_{\epsilon}}(\Omega) = W^{1,m}_{0}(\Omega)$. In particular, we obtain that
$u_{\epsilon} \in W^{1,m}_{0}(\Omega)$ for any $\epsilon > 0$.

Now we shall prove that $u_{\epsilon}$ is bounded in $W^{1,\Phi}_{0}(\Omega)$. Putting $u_{\epsilon}$ as testing function in \eqref{test} we easily see that
\begin{equation*}
\epsilon \int_{\Omega} |\nabla u_{\epsilon}|^{m} dx + \int_{\Omega} \phi(|\nabla u_{\epsilon}|) |\nabla u_{\epsilon}|^{2} dx = \int_{\Omega} f u_{\epsilon} dx.
\end{equation*}
Using Holder's inequality we also see that
\begin{equation*}
\epsilon \int_{\Omega} |\nabla u_{\epsilon}|^{m} dx + \int_{\Omega} \phi(|\nabla u_{\epsilon}|) |\nabla u_{\epsilon}|^{2} dx \leq \|f\|_{N} \|u_{\epsilon}\|_{1^{\star}}.
\end{equation*}
Taking into account the embedding $W^{1,1}_{0}(\Omega) \hookrightarrow L^{1^{\star}}(\Omega)$ there exists $S = S(N,\Omega) > 0$ in such way that
\begin{equation*}
\|v\|_{1^{\star}} \leq S \|v\|_{W^{1,1}_{0}(\Omega)}, v \in W^{1,1}_{0}(\Omega).
\end{equation*}
As a consequence the last embedding and hypothesis $(\phi_{3})$ imply that
\begin{eqnarray}\label{e0}
\int_{\Omega} \Phi(|\nabla u_{\epsilon}|) dx &\leq& \int_{\Omega} \phi(|\nabla u_{\epsilon}|) |\nabla u_{\epsilon}|^{2} dx \nonumber \\
&\leq& m\epsilon \int_{\Omega} |\nabla u_{\epsilon}|^{m} dx + \int_{\Omega} \phi(|\nabla u_{\epsilon}|) |\nabla u_{\epsilon}|^{2} dx \nonumber \\
&\leq& \|f\|_{N} \|u_{\epsilon}\|_{1^{\star}} \leq S \|f\|_{N} \|u_{\epsilon}\|_{W^{1,1}_{0}(\Omega)}.
\end{eqnarray}
Let $K > 0$ be fixed. Using the last estimate and hypothesis $(\phi_{2})$ it follows that
\begin{eqnarray}
\|u_{\epsilon}\|_{W^{1,1}_{0}(\Omega)} &=& \int_{|\nabla u_{\epsilon}| \leq K} |\nabla u_{\epsilon}| dx + \int_{|\nabla u_{\epsilon}| > K} |\nabla u_{\epsilon}|dx \nonumber \\
&\leq& K |\Omega|  + \dfrac{1}{K \phi(K)}\int_{|\nabla u_{\epsilon}| > K} \phi(|\nabla u_{\epsilon}|)|\nabla u_{\epsilon}|^{2}dx. \nonumber
\end{eqnarray}
Putting the all estimates just above together we obtain
\begin{equation}\label{e1}
\|u_{\epsilon}\|_{W^{1,1}_{0}(\Omega)} \leq K |\Omega| + \dfrac{S \|f\|_{N}}{K \phi(K)} \|u_{\epsilon}\|_{W^{1,1}_{0}(\Omega)}.
\end{equation}
Now due the fact that $\displaystyle \lim_{K \rightarrow \infty} K \phi(K) = \infty$ there exists $K_{0} > 0$ such that
\begin{equation*}
\dfrac{S \|f\|_{N}}{K \phi(K)} < 1
\end{equation*}
for any $K \geq K_{0}$. In particular, using inequality \eqref{e1}, we infer that
\begin{equation*}
\|u_{\epsilon}\|_{W^{1,1}_{0}(\Omega)} \leq \dfrac{ K |\Omega|}{1 - \frac{S \|f\|_{N}}{K \phi(K)}}.
\end{equation*}
Furthermore, taking into account hypothesis $(\phi_{3})$ and \eqref{e0}, we also mention that
\begin{equation*}
\int_{\Omega} \Phi(|\nabla u_{\epsilon}|) dx \leq \int_{\Omega} \phi(|\nabla u_{\epsilon}|) |\nabla u_{\epsilon}|^{2} dx \leq \dfrac{S \|f\|_{N} K|\Omega|}{1 - \frac{S \|f\|_{N}}{K \phi(K)}}.
\end{equation*}
Now we define
\begin{equation*}
R = \max \left\{ \dfrac{S \|f\|_{N} K|\Omega|}{1 - \frac{S \|f\|_{N}}{K \phi(K)}}, \dfrac{K|\Omega|}{1 - \frac{S \|f\|_{N}}{K \phi(K)}}  \right\}.
\end{equation*}
As a consequence we have been shown that
\begin{equation}\label{e3}
\int_{\Omega} \Phi(|\nabla u_{\epsilon}|) dx \leq R, \int_{\Omega} \phi(|\nabla u_{\epsilon}|)|\nabla u_{\epsilon}|^{2} dx \leq R, \int_{\Omega} |\nabla u_{\epsilon}| dx \leq R.
\end{equation}
According to Lemma \ref{lema_naru} in the Appendix  it follows that
\begin{equation*}
\min \left( \|u_{\epsilon}\|_{W^{1,\Phi}_{0}(\Omega)}, \|u_{\epsilon}\|^{m}_{W^{1,\Phi}_{0}(\Omega)} \right) \leq \int_{\Omega} \Phi(|\nabla u_{\epsilon}|) dx \leq R.
\end{equation*}
Hence the sequence $(u_{\epsilon})$ is now bounded in $\w$ and $W^{1,1}_{0}(\Omega)$. This completes the proof.
\end{proof}

\begin{prop}\label{S+}
	Suppose $(\phi_1),(\phi_2),(\phi_3)'$. Let $(u_{n}) \in \w$ be a sequence satisfying
	\begin{description}
	  \item[i)] $u_{n} \stackrel{*}\rightharpoonup u$ in $\w$;
	  \item[ii)] $\displaystyle \limsup_{n \rightarrow \infty} \langle -\Delta_\Phi u_{n}, u_{n} - u \rangle \leq 0$.
	\end{description}
Then we obtain that $u_{n} \rightarrow u$ in $\w$. Under this condition we say that the operator $\Phi$-Laplacian is of $(S)^{+}$ type.
\end{prop}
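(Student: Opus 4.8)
The plan is to bypass the troublesome cross term altogether by exploiting the convexity and the weak-star lower semicontinuity of the modular energy $J(v):=\int_\Omega\Phi(|\nabla v|)\,dx$, rather than by testing the weak-star limit against $\phi(|\nabla u|)\nabla u$. Throughout I write $\langle-\Delta_\Phi u_n,u_n-u\rangle=\int_\Omega\phi(|\nabla u_n|)\nabla u_n\cdot(\nabla u_n-\nabla u)\,dx$, and I note that $(u_n)$ is bounded in $\w$ since a weak-star convergent sequence in a dual space is norm bounded; hence $J(u_n)$ is bounded. The essential obstacle is precisely the nonreflexivity of $\w$: we only dispose of $\nabla u_n\stackrel{*}\rightharpoonup\nabla u$ in $L_\Phi(\Omega)=(E_{\widetilde\Phi}(\Omega))^*$, that is, convergence tested against $E_{\widetilde\Phi}(\Omega)$ only, and $\phi(|\nabla u|)\nabla u$ need not lie in $E_{\widetilde\Phi}(\Omega)$ when $\ell=1$. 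Thus the term $\int_\Omega\phi(|\nabla u|)\nabla u\cdot(\nabla u_n-\nabla u)\,dx$ cannot be passed to the limit directly, and the whole argument must be organized so that it never appears.

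First I would prove convergence of the energies, $J(u_n)\to J(u)$. The convexity of $\xi\mapsto\Phi(|\xi|)$ on $\R^N$, whose gradient is $\xi\mapsto\phi(|\xi|)\xi$, yields the subgradient inequality $\Phi(|\nabla u|)\ge\Phi(|\nabla u_n|)+\phi(|\nabla u_n|)\nabla u_n\cdot(\nabla u-\nabla u_n)$ pointwise; integrating gives $J(u_n)\le J(u)+\langle-\Delta_\Phi u_n,u_n-u\rangle$, so hypothesis ii) forces $\limsup_n J(u_n)\le J(u)$. For the reverse inequality I would use that $J$ is weak-star lower semicontinuous: writing $\Phi(|\xi|)=\sup_{\eta}\{\xi\cdot\eta-\widetilde\Phi(|\eta|)\}$ and passing to the integral functional, $J$ is the supremum over $\vec w\in E_{\widetilde\Phi}(\Omega)$ of the functionals $v\mapsto\int_\Omega\nabla v\cdot\vec w\,dx-\int_\Omega\widetilde\Phi(|\vec w|)\,dx$ (the restriction to $\vec w\in L^\infty$ recovers $J$ by truncating the optimal selection $\phi(|\nabla v|)\nabla v$ and monotone convergence), each of which is weak-star continuous by the very definition of i). Hence $J(u)\le\liminf_n J(u_n)$, and therefore $J(u_n)\to J(u)$. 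Observe that this step never invokes the forbidden cross term.

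Next I would upgrade this to almost everywhere convergence of the gradients through a midpoint-convexity device. Set $w_n:=\tfrac12\Phi(|\nabla u_n|)+\tfrac12\Phi(|\nabla u|)-\Phi\big(\big|\tfrac{\nabla u_n+\nabla u}{2}\big|\big)\ge0$. Since $\tfrac{u_n+u}{2}\stackrel{*}\rightharpoonup u$, the weak-star lower semicontinuity of $J$ together with the convexity bound $\int_\Omega\Phi(|\tfrac{\nabla u_n+\nabla u}{2}|)\,dx\le\tfrac12 J(u_n)+\tfrac12 J(u)$ gives $\int_\Omega\Phi(|\tfrac{\nabla u_n+\nabla u}{2}|)\,dx\to J(u)$; combined with $J(u_n)\to J(u)$ this yields $\int_\Omega w_n\,dx\to0$. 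Thus $w_n\to0$ in $L^1(\Omega)$ and, along a subsequence, $w_n\to0$ a.e. At a.e. point the vectors $\nabla u_n$ cannot escape to infinity, because $\tfrac12\Phi(s)-\Phi(s/2)\to\infty$ for every superlinear $N$-function (otherwise iterating $\Phi(s)\le2\Phi(s/2)+C$ would force linear growth); and where $\nabla u_n$ stays bounded, the strict convexity of $\xi\mapsto\Phi(|\xi|)$ coming from $(\phi_2)$ forces $\nabla u_n\to\nabla u$. Hence $\nabla u_n\to\nabla u$ a.e. in $\Omega$.

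Finally I would convert almost everywhere convergence plus energy convergence into norm convergence. Since $\Phi(|\nabla u_n|)\to\Phi(|\nabla u|)$ a.e. with $\int_\Omega\Phi(|\nabla u_n|)\,dx\to\int_\Omega\Phi(|\nabla u|)\,dx$, Scheffé's lemma gives $\Phi(|\nabla u_n|)\to\Phi(|\nabla u|)$ in $L^1(\Omega)$, so $\{\Phi(|\nabla u_n|)\}$ is uniformly integrable. Using the consequence $\Phi(2t)\le 2^m\Phi(t)$ of $(\phi_3)'$ (as in Proposition \ref{lema_naru}), the bound $\Phi(|\nabla u_n-\nabla u|)\le 2^{m-1}\big(\Phi(|\nabla u_n|)+\Phi(|\nabla u|)\big)$ dominates a uniformly integrable family while $\Phi(|\nabla u_n-\nabla u|)\to0$ a.e.; Vitali's theorem then yields $\int_\Omega\Phi(|\nabla u_n-\nabla u|)\,dx\to0$. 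Because $\Phi\in\Delta_2$ (as $m<\infty$), this modular convergence is equivalent to $\|\nabla u_n-\nabla u\|_\Phi\to0$, that is $u_n\to u$ in $\w$; a routine subsequence argument promotes the conclusion from the chosen subsequence to the full sequence. The hard part is the obstacle identified at the outset, the impossibility of testing the weak-star limit against $\phi(|\nabla u|)\nabla u$, which this whole scheme is designed to sidestep by leaning on the lower semicontinuity of $J$ and on almost-everywhere arguments.
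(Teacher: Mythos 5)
Your proof is correct in substance, but it follows a genuinely different route from the paper's. The paper does not really prove Proposition \ref{S+} at all: it cites \cite[Prop.~3.5]{Correa} and asserts that one may ``replace weak convergence by weak star convergence.'' That cited argument is the standard monotonicity trick, which writes $0\le\langle -\Delta_\Phi u_n,u_n-u\rangle-\int_\Omega\phi(|\nabla u|)\nabla u\cdot(\nabla u_n-\nabla u)\,dx$ and needs the second term to vanish in the limit; as you correctly observe, when $\ell=1$ the field $\phi(|\nabla u|)\nabla u$ lies in $L_{\widetilde\Phi}(\Omega)$ but not necessarily in the predual $E_{\widetilde\Phi}(\Omega)$, so weak-star convergence of $\nabla u_n$ does not justify that limit. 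Your scheme --- subgradient inequality at $u_n$ to get $\limsup_n J(u_n)\le J(u)$, weak-star lower semicontinuity of the modular via its representation as a supremum of affine functionals with densities in $E_{\widetilde\Phi}$, the midpoint-convexity defect $w_n$ to extract a.e.\ convergence of gradients, and Scheff\'e/Vitali to upgrade to modular (hence, by $\Delta_2$ for $\Phi$, norm) convergence --- avoids the problematic pairing entirely and is therefore more robust in the nonreflexive setting; what it costs is length and the need for the strict convexity coming from $(\phi_2)$. One step deserves a more careful justification: the quantity that must blow up to rule out $|\nabla u_n(x)|\to\infty$ at a point where $w_n(x)\to0$ is $F(s):=\tfrac12\Phi(s)+\tfrac12\Phi(t)-\Phi\bigl(\tfrac{s+t}{2}\bigr)$ with $t=|\nabla u(x)|$ fixed, not literally $\tfrac12\Phi(s)-\Phi(s/2)$. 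The claim is still true: setting $\sigma=\tfrac{s+t}{2}$, convexity gives $F(s)\ge\tfrac12\int_t^{\sigma}\bigl[\Phi'(\sigma)-\Phi'(\tau)\bigr]d\tau\ge\tfrac12\bigl[\Phi'(\sigma)-\Phi'(t+1)\bigr]\to\infty$, using that $\Phi'(\tau)=\tau\phi(\tau)$ is increasing and unbounded by $(\phi_1)$ and $(\phi_2)$. With that repair the argument is complete.
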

\begin{proof}
The proof is similar to the proof of \cite[Prop. 3.5]{Correa} replacing the weak convergence $u_{n}\rightharpoonup u$ by the weak star convergence $u_{n} \stackrel{*}\rightharpoonup u$. For the reader convenience we give a sketch for the proof.  Here we emphasize one more time that $\w$ is not reflexive anymore. However, the Orlicz-Sobolev space $\w$ is isomorphic to a closed set in the weak star topology. More precisely, we mention that
$$\w \subseteq \displaystyle \prod_{j=1}^{N + 1} L^{\Phi}(\Omega) \simeq \left( \prod_{j = 1}^{N + 1} E_{\Phi}\right)^{\star}$$
where $E_{\Phi}$ is a separable space. Under these conditions the proof following the same ideas discussed in \cite[Prop. 3.5]{Correa}. This ends the proof.
\end{proof}

\begin{prop}\label{pr2}
Suppose $(\phi_{1})- (\phi_{4})$ where $\ell = 1$. Then the problem \eqref{p1} admits at least one solution $u \in \w $.
\end{prop}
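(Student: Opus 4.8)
The plan is to realize the solution as the limit, along $\epsilon\to 0$, of the unique solutions $u_\epsilon\in W^{1,\Phi_\epsilon}_0(\Omega)$ of the regularized problems \eqref{aux}, whose existence and uniqueness are guaranteed by Theorem \ref{thaux} since $\ell_\epsilon>1$ by Lemma \ref{ell-epsilon}. First I would invoke Proposition \ref{pr1} to get that $(u_\epsilon)$ is bounded in $\w$ and in $W^{1,1}_0(\Omega)$, together with the uniform modular bounds $\int_\Omega\Phi(|\nabla u_\epsilon|)dx\leq R$ and $\int_\Omega\phi(|\nabla u_\epsilon|)|\nabla u_\epsilon|^2dx\leq R$. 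Since $(\phi_3)^\prime$ forces $m<\infty$, the function $\Phi$ satisfies the $\Delta_2$-condition, so $L_\Phi(\Omega)=E_\Phi(\Omega)$ and $\w$ is isometric to a closed subspace of a dual Banach space, as recorded in Proposition \ref{S+}; by Banach--Alaoglu I extract a subsequence with $u_\epsilon\stackrel{*}\rightharpoonup u$ in $\w$. Using the $W^{1,1}_0(\Omega)$-bound and Rellich--Kondrachov I may also assume $u_\epsilon\to u$ a.e. in $\Omega$.

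Next I would set up the passage to the limit of the flux. From the estimate $\widetilde\Phi(t\phi(t))\leq\Phi(2t)$ and $\Phi\in\Delta_2$ one obtains $\int_\Omega\widetilde\Phi(\phi(|\nabla u_\epsilon|)|\nabla u_\epsilon|)dx\leq\int_\Omega\Phi(2|\nabla u_\epsilon|)dx\leq 2^mR$, so $\phi(|\nabla u_\epsilon|)\nabla u_\epsilon$ is bounded in $(L_{\widetilde\Phi}(\Omega))^N=((E_\Phi(\Omega))^N)^*$; passing to a further subsequence, $\phi(|\nabla u_\epsilon|)\nabla u_\epsilon\stackrel{*}\rightharpoonup\xi$ for some $\xi\in(L_{\widetilde\Phi}(\Omega))^N$. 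Testing \eqref{test} with a fixed $w\in C_0^\infty(\Omega)$, the regularizing term is controlled by H\"older's inequality as $\epsilon\big|\int_\Omega|\nabla u_\epsilon|^{m-2}\nabla u_\epsilon\cdot\nabla w\,dx\big|\leq\epsilon\,\||\nabla u_\epsilon|^{m-1}\|_{m^\prime}\|\nabla w\|_m\leq C\epsilon^{1/m}\to 0$, where I use that $\epsilon\int_\Omega|\nabla u_\epsilon|^m dx$ is bounded, while the principal term converges to $\int_\Omega\xi\cdot\nabla w\,dx$ because $\nabla w\in(E_\Phi(\Omega))^N$. Hence $\int_\Omega\xi\cdot\nabla w\,dx=\int_\Omega fw\,dx$ for all $w\in C_0^\infty(\Omega)$, and by density of $C_0^\infty(\Omega)$ in $\w$ together with the continuity of both sides (H\"older in Orlicz spaces on the left, and the embedding $\w\hookrightarrow L^{1^{*}}(\Omega)$ with $f\in L^N(\Omega)$ on the right) this identity extends to every $w\in\w$, in particular to $w=u$.

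Then I would verify the hypothesis of Proposition \ref{S+}. Testing \eqref{test} with $w=u_\epsilon$ gives $\int_\Omega\phi(|\nabla u_\epsilon|)|\nabla u_\epsilon|^2dx=\int_\Omega fu_\epsilon dx-\epsilon\int_\Omega|\nabla u_\epsilon|^mdx$, so that
\begin{equation*}
\langle-\Delta_\Phi u_\epsilon,u_\epsilon-u\rangle=\int_\Omega fu_\epsilon\,dx-\epsilon\int_\Omega|\nabla u_\epsilon|^mdx-\int_\Omega\phi(|\nabla u_\epsilon|)\nabla u_\epsilon\cdot\nabla u\,dx.
\end{equation*}
The sequence $(fu_\epsilon)$ is uniformly integrable, since $\int_E|fu_\epsilon|dx\leq\|f\|_{L^N(E)}\|u_\epsilon\|_{1^{*}}$ with $f\in L^N(\Omega)$, so Vitali's theorem yields $\int_\Omega fu_\epsilon\,dx\to\int_\Omega fu\,dx$; the term $\epsilon\int_\Omega|\nabla u_\epsilon|^mdx$ is nonnegative; and $\int_\Omega\phi(|\nabla u_\epsilon|)\nabla u_\epsilon\cdot\nabla u\,dx\to\int_\Omega\xi\cdot\nabla u\,dx$ by weak-star convergence against $\nabla u\in(E_\Phi(\Omega))^N$. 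Combining these with $\int_\Omega\xi\cdot\nabla u\,dx=\int_\Omega fu\,dx$ from the previous paragraph gives $\limsup_{\epsilon\to 0}\langle-\Delta_\Phi u_\epsilon,u_\epsilon-u\rangle\leq\int_\Omega fu\,dx-\int_\Omega\xi\cdot\nabla u\,dx=0$. Proposition \ref{S+} then furnishes $u_\epsilon\to u$ strongly in $\w$, whence $\nabla u_\epsilon\to\nabla u$ a.e. up to a subsequence and $\phi(|\nabla u_\epsilon|)\nabla u_\epsilon\to\phi(|\nabla u|)\nabla u$ a.e.; this identifies $\xi=\phi(|\nabla u|)\nabla u$, and the limit identity $\int_\Omega\xi\cdot\nabla w\,dx=\int_\Omega fw\,dx$ becomes precisely \eqref{sol-fraca2}, so $u$ is the sought weak solution.

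The main obstacle is the loss of reflexivity: one cannot work with ordinary weak convergence, and the whole argument hinges on placing $\nabla u$ and the test gradients $\nabla w$ in the predual $E_\Phi(\Omega)$ so that the weak-star limit $\xi$ of the fluxes can be paired with them. This is exactly what the $\Delta_2$-property of $\Phi$ (equivalently $m<\infty$, available even though $\ell=1$) secures, and it is also what makes Proposition \ref{S+} applicable. A secondary technical point is to ensure that the regularizing term $\epsilon\int_\Omega|\nabla u_\epsilon|^mdx$ does not survive in the limit; the $\epsilon^{1/m}$ gain in the test computation and the favorable sign of this term in the identity for $\langle-\Delta_\Phi u_\epsilon,u_\epsilon-u\rangle$ dispose of it.
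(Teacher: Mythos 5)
Your argument is correct, and it reaches the conclusion by a genuinely different (and in places cleaner) route than the paper. Both proofs share the same skeleton: the uniform bounds of Proposition \ref{pr1}, weak-star extraction of $u$, and the $(S^+)$ property of Proposition \ref{S+} to upgrade to strong convergence. The divergence is in how the limit equation is obtained. The paper never introduces a weak-star limit of the fluxes; since $u$ need not be an admissible test function for the $\epsilon$-problem (it lies in $\w$, not in $W^{1,\Phi_\epsilon}_0(\Omega)=W^{1,m}_0(\Omega)$), it approximates $u$ by smooth functions $U_k$, tests with $u_\epsilon-U_k$, and runs a double limit in $\epsilon$ and $k$ to reach $\limsup_{\epsilon\to 0}\langle-\Delta_\Phi u_\epsilon,u_\epsilon-u\rangle\leq 0$; after the $(S^+)$ step it passes to the limit in the equation via a.e.\ convergence of gradients (Dal Maso--Murat) and dominated convergence. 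You instead place the fluxes $\phi(|\nabla u_\epsilon|)\nabla u_\epsilon$ in $L_{\widetilde\Phi}(\Omega)^N=\bigl((E_\Phi(\Omega))^N\bigr)^*$, extract a weak-star limit $\xi$, derive $\int_\Omega\xi\cdot\nabla w\,dx=\int_\Omega fw\,dx$ for all $w\in\w$ by testing with smooth $w$ and using density, and only then feed $w=u$ into this identity to verify hypothesis (ii) of Proposition \ref{S+}; the identification $\xi=\phi(|\nabla u|)\nabla u$ comes for free afterwards. This sidesteps the paper's $U_k$-approximation entirely, because you never need to test the $\epsilon$-problem against $u$ --- the pairing $\int_\Omega\phi(|\nabla u_\epsilon|)\nabla u_\epsilon\cdot\nabla u\,dx$ is just a finite integral controlled by H\"older. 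Your treatment of the regularizing term is also more careful than the paper's: the bound $\epsilon\|\,|\nabla u_\epsilon|^{m-1}\|_{m'}\leq C\epsilon^{1/m}$, obtained from $\epsilon\int_\Omega|\nabla u_\epsilon|^m dx\leq C$, avoids the paper's questionable claim that $(u_\epsilon)$ is bounded in $W^{1,m}_0(\Omega)$ uniformly in $\epsilon$. What the paper's route buys is that it stays entirely at the level of the approximate equations and never needs the duality $(E_\Phi)^*=L_{\widetilde\Phi}$ for the fluxes; what yours buys is a shorter limit passage and a transparent Minty--Browder-type structure.
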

\begin{proof}
Let $u_{\epsilon} \in W^{1,\Phi_{\epsilon}}_{0}(\Omega)$ be the unique solution for the auxiliary elliptic problem \eqref{aux}. According to Proposition \ref{pr1} we infer that $u_{\epsilon}$ is bounded in $\w$ and $W^{1,1}_{0}(\Omega)$. As a consequence $u_{\epsilon} \stackrel{*}\rightharpoonup u$
in the weak star topology. Indeed, the Orlicz-Sobolev space $\w$ is isomorphic to a closed set in the weak star topology. More precisely, as was mentioned before we observe that
$$\w \subseteq \displaystyle \prod_{j=1}^{N + 1} L^{\Phi}(\Omega) \simeq \left( \prod_{j = 1}^{N + 1} E_{\Phi}\right)^{\star}$$
where $E_{\Phi}$ is a separable space. For further results on weak star topologies we refer the reader to Gossez \cite{gossez-Czech,Gz1}.

Now, using the weak star converge for $u_{\epsilon}$, we observe that
\begin{eqnarray}
\int_{\Omega} |\nabla u | |\nabla u_{\epsilon}| \phi(|\nabla u_{\epsilon}|) dx \leq C\nonumber
\end{eqnarray}
holds true for some $C > 0$. In fact, using Young's inequality and the $\Delta_{2}$ condition for $\Phi$, we have that
\begin{eqnarray}
 |\nabla u | |\nabla u_{\epsilon}| \phi(|\nabla u_{\epsilon}|) &\leq& \Phi(|\nabla u|) + \tilde{\Phi}(|\nabla u_{\epsilon}| \phi(|\nabla u_{\epsilon}|)) \nonumber \\
&\leq&  \Phi(|\nabla u|) + \Phi( 2 |\nabla u_{\epsilon}|) \leq  \Phi(|\nabla u|) + 2^{m} \Phi(|\nabla u_{\epsilon}|) \nonumber
\end{eqnarray}
Hence the last estimate together with \eqref{e3} imply that
\begin{equation*}
\int_{\Omega}  |\nabla u | |\nabla u_{\epsilon}| \phi(|\nabla u_{\epsilon}|) dx \leq  \int_{\Omega} [\Phi(|\nabla u|) + 2^{m} \Phi(|\nabla u_{\epsilon}|)] dx \leq R + 2^{m}R.
\end{equation*}

Now we claim that u is a weak solution to the elliptic problem \eqref{p1}. Here we note that $u$ is not in general a testing function for the auxiliary elliptic problem \eqref{aux}. In this way, we shall consider a density argument in order to prove the claim just above. More specifically,
we know that $C^{\infty}_{0}(\Omega)$ is dense in $W^{1,1}_{0}(\Omega)$ and $\w$. As a product there exists a sequence $(U_{k})$ in $C^{\infty}_{0}(\Omega)$ in such way that
\begin{equation}\label{e4}
\|u - U_{k}\|_{W^{1,1}_{0}(\Omega)}, \|u - U_{k}\|_{\w}  \leq \dfrac{1}{k}.
\end{equation}
Using $u_{\epsilon} - U_{k}$ as testing function in the problem \eqref{aux} we mention that
\begin{equation}\label{e8}
 \epsilon \langle -\Delta_{m} u_{\epsilon} , u_{\epsilon} - U_{k} \rangle + \langle - \Delta_{\Phi_{\epsilon}}u_\epsilon
  , u_{\epsilon} - U_{k} \rangle = \int_{\Omega} f (u_{\epsilon} - U_{k}) dx.
\end{equation}
The last identity says also that
\begin{equation*}
- \epsilon \int_{\Omega} |\nabla u_{\epsilon}|^{m-2} \nabla u_{\epsilon} \nabla U_{k} dx  + \int_{\Omega} \phi(|\nabla u_{\epsilon}|) \nabla u_{\epsilon} \nabla ( u_{\epsilon} - U_{k}) dx  \leq \int_{\Omega} f (u_{\epsilon} - U_{k}) dx.
\end{equation*}
The last inequality can be written in the following form
\begin{eqnarray}
- \epsilon \int_{\Omega} |\nabla u_{\epsilon}|^{m-2} \nabla u_{\epsilon} \nabla U_{k} dx &+& \int_{\Omega} \phi(|\nabla u_{\epsilon}|) \nabla u_{\epsilon} \nabla ( u_{\epsilon} - u) dx \nonumber \\
&+& \int_{\Omega} \phi(|\nabla u_{\epsilon}|) \nabla u_{\epsilon} \nabla (u - U_{k})\nonumber \\
&\leq& \int_{\Omega} f (u_{\epsilon} - u) dx + \int_{\Omega} f (u - U_{k}) dx. \nonumber
\end{eqnarray}
Moreover, we mention that $\phi(|\nabla u|)|\nabla u| |\nabla (u_{\epsilon} - u)| \in L^{1}(\Omega)$.

At this moment we claim that
\begin{equation*}
\left|\int_{\Omega} |\nabla u_{\epsilon}|^{m-2} \nabla u_{\epsilon} \nabla U_{k} dx \right| \leq C
\end{equation*}
holds for some $C > 0$ independent on $\epsilon > 0$. Indeed, the continuous embedding $W^{1,\Phi_{\epsilon}}(\Omega) \hookrightarrow W^{1,m}_{0}(\Omega)$
provide a positive number $C > 0$ in such way that
\begin{equation*}
\|v\|_{m} \leq C \|v\|_{W^{1,\Phi_{\epsilon}}_{0}(\Omega)}, v \in W^{1,\Phi_{\epsilon}}_{0}(\Omega).
\end{equation*}
Taking $v = u_{\epsilon}$ in the previous estimate we obtain
\begin{equation*}
\|u_{\epsilon}\|_{W^{1,m}_{0}(\Omega)} \leq C \|u_{\epsilon}\|_{\w} \leq C.
\end{equation*}
In other words, we have been shown that $(u_{\epsilon})$ is bounded in $W^{1,m}_{0}(\Omega)$ for any $\epsilon > 0$. Hence, using Holder's inequality and the estimate just above, we deduce
\begin{eqnarray}
\left| \int_{\Omega} |\nabla u_{\epsilon}|^{m-2}\nabla u_{\epsilon} \nabla U_{k} dx \right| &\leq&
 \| |\nabla u_{\epsilon}|^{m-1}\|_{m/m-1} \|U_{k}\|_{W^{1,m}_{0}(\Omega)} \nonumber \\
 &\leq& \|u_{\epsilon}\|_{W^{1,m}_{0}(\Omega)}^{m-1} \|U_{k}\|_{W^{1,m}_{0}(\Omega)} \leq C \|U_{k}\|_{W^{1,m}_{0}(\Omega)}. \nonumber
\end{eqnarray}
As a product taking the limit in the last inequality we see that
\begin{equation}\label{e6}
\lim_{\epsilon \rightarrow 0} \epsilon \int_{\Omega}  |\nabla u_{\epsilon}|^{m-2}\nabla u_{\epsilon} \nabla U_{k} dx = 0.
\end{equation}
On the other hand, due the weak star convergence, we also see that
\begin{equation}\label{e7}
\lim_{\epsilon \rightarrow 0} \int_{\Omega}  f ( u_{\epsilon} - u ) dx = 0.
\end{equation}
Now, using one more time the Holder's inequality,  we observe that
\begin{equation}\label{e9}
\lim_{k \rightarrow \infty} \int_{\Omega} f (u - U_{k}) dx = 0.
\end{equation}
In fact, using Orlicz-Sobolev embedding  and \eqref{e4}, we easily see that
\begin{equation*}
\left| \int_{\Omega} f (u - U_{k}) dx \right| \leq \|f\|_{N} \|u - U_{k}\|_{1^{\star}} \leq C \|f\|_{N}  \|u - U_{k}\| \leq \dfrac{C \|f\|_{N}}{k}.
\end{equation*}
as $k \rightarrow \infty$. Additionally, we claim also that
\begin{equation*}
\lim_{k \rightarrow \infty} \int_{\Omega} \phi (|\nabla u_{\epsilon}|) \nabla u_{\epsilon} \nabla (u - U_{k})dx = 0.
\end{equation*}
The proof for this claim follows the following ideas. Firstly, we shall use one more time Holder's inequality proving that
\begin{eqnarray}\label{e5}
\left| \int_{\Omega} \phi (|\nabla u_{\epsilon}|) \nabla u_{\epsilon} \nabla (u - U_{k})dx\right| &\leq& 2 \|\nabla (u - U_{k})\|_{\Phi} \|\phi(|\nabla u_{\epsilon}|) |\nabla u_{\epsilon}|\|_{\tilde{\Phi}}\nonumber\\
& \leq & \dfrac{2}{k} \|\phi(|\nabla u_{\epsilon}|) |\nabla u_{\epsilon}|\|_{\tilde{\Phi}}.
\end{eqnarray}
On the other hand, due the $\Delta_{2}$ condition for $\Phi$ and estimate \eqref{e3}, we get
\begin{equation*}
\int_{\Omega} \tilde{\Phi}(\phi(|\nabla u_{\epsilon}|) |\nabla u_{\epsilon}| ) \leq \int_{\Omega} \Phi(2|\nabla u_{\epsilon}|) dx \leq 2^{m} \int_{\Omega} \Phi(|\nabla u_{\epsilon}|) \leq 2^{m} R.
\end{equation*}
Now, using one more time that $\Phi$ is convex, we deduce that
\begin{equation*}
\|\phi(|\nabla u_{\epsilon}|) |\nabla u_{\epsilon}|\|_{\tilde{\Phi}} \int_{\Omega} \tilde{\Phi} \left(\dfrac{\phi(|\nabla u_{\epsilon}|) |\nabla u_{\epsilon}|}{\|\phi(|\nabla u_{\epsilon}|) |\nabla u_{\epsilon}|\|_{\tilde{\Phi}}}\right) \leq \int_{\Omega} \tilde{\Phi} (\phi(|\nabla u_{\epsilon}||\nabla u_{\epsilon}|) dx \leq 2^{m} R
\end{equation*}
holds true whenever $\|\phi(|\nabla u_{\epsilon}|) |\nabla u_{\epsilon}|\|_{\tilde{\Phi}} \geq 1$. Hence the last estimate shows that
\begin{equation*}
\|\phi(|\nabla u_{\epsilon}|) |\nabla u_{\epsilon}|\|_{\tilde{\Phi}} \leq \max(1, 2^{m}R).
\end{equation*}

Now taking into account \eqref{e5} we obtain that
\begin{equation*}
\lim_{k \rightarrow \infty} \int_{\Omega} \phi(|\nabla u_{\epsilon}|) \nabla u_{\epsilon} \nabla (u - U_{k}) dx = 0.
\end{equation*}
At this moment using \eqref{e6}, \eqref{e7}, \eqref{e9} and taking the limits as $\epsilon \rightarrow 0$
and $k\rightarrow \infty$ in the inequality \eqref{e8} we get
\begin{equation*}
\limsup_{\epsilon \rightarrow 0} \int_{\Omega} \phi(|\nabla u_{\epsilon}|) \nabla u_{\epsilon} \nabla (u_{\epsilon} - u) dx =0
\end{equation*}
Summing up, due the $(S^{+})$ condition, for the $\Phi$-Laplacian operator, we have that $u_{n} \rightarrow $u in $W^{1,\Phi}_{0}(\Omega)$ (cf. Proposition \ref{S+}). In this way it follows from  Dal Masso et al \cite{maso} that
\begin{equation*}
\nabla u_{\epsilon} \rightarrow \nabla u \,\, \mbox{a. e. in} \,\, \Omega.
\end{equation*}
Moreover, there exists $h \in L^{1}(\Omega)$ in such way that
\begin{equation*}
\Phi(|\nabla (u_{\epsilon} - u)|) \leq h \,\, \mbox{a. e. in} \,\, \Omega.
\end{equation*}
The last estimate says that
\begin{equation*}
|\nabla (u_{\epsilon} - u)| \leq \Phi^{-1}(h) \,\, \mbox{a. e. in}  \,\,\Omega.
\end{equation*}
As a consequence
\begin{equation*}
|\nabla u_{\epsilon}| \leq |\nabla (u_{\epsilon} - u)| +  |\nabla u| \leq \Phi^{-1}(h) + |\nabla u|.
\end{equation*}
In particular, using one more time Young's inequality and $\Delta_{2}$ condition for $\Phi$, we have that
\begin{eqnarray}
\phi(|\nabla u_{\epsilon}|) |\nabla u_{\epsilon}| |\nabla v| &\leq& \Phi(|\nabla v|) + \tilde{\Phi}(\phi(|\nabla u_{\epsilon}|) |\nabla u_{\epsilon}|) \nonumber \\
&\leq& \Phi(|\nabla v|) + \Phi(2 |\nabla u_{\epsilon}|) \leq \Phi(|\nabla v|) + 2^{m} \Phi(|\nabla u_{\epsilon}|).
\nonumber
\end{eqnarray}
Now, using the last estimate and due the convexity of $\Phi$, we obtain
\begin{equation*}
\phi(|\nabla u_{\epsilon}|) |\nabla u_{\epsilon}| |\nabla v| \leq  \Phi(|\nabla v|) + 2^{m} \Phi(|\nabla u| + \Phi^{-1}(h))
\leq \Phi(|\nabla v|) + 2^{2m} [\Phi(|\nabla u|) + h]
\end{equation*}
As a consequence the Lebesgue convergence theorem implies that
\begin{equation*}
\lim_{\epsilon \rightarrow 0} \int_{\Omega} \phi(|\nabla u_{\epsilon}|) \nabla u_{\epsilon} \nabla v  dx = \int_{\Omega} \phi(|\nabla u|) \nabla u \nabla v dx, v \in \w.
\end{equation*}
Putting all estimates together and taking the limit as $\epsilon \rightarrow 0$ in the equation
\begin{equation*}
\epsilon \langle - \Delta_{m} u_{\epsilon}, v \rangle + \int_{\Omega} \phi((|\nabla u_{\epsilon}|) \nabla u_{\epsilon} \nabla v dx = \int_{\Omega} f v dx, v \in \w
\end{equation*}
we conclude that
\begin{equation*}
\int_{\Omega} \phi(|\nabla u|) \nabla u \nabla v dx =  \int_{\Omega} f v dx, v \in \w
\end{equation*}
To sum up, $u \in \w$ is a weak solution for the problem \eqref{p1}. Using the same ideas discussed in the proof of Proposition \ref{unic} we know that problem \eqref{p1} admits at most
one solution. Consequently, the problem \eqref{p1} admits exactly one solution for each $f \in L^{N}(\Omega)$. This ends the proof.
\end{proof}

\begin{prop}
Suppose $(\phi_{1})- (\phi_{4})$ where $\ell = 1$. Then the problem \eqref{p1} admits exactly one solution $u \in \w $.
\end{prop}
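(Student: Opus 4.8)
The plan is to combine the existence result already established in Proposition \ref{pr2} with the uniqueness result of Proposition \ref{unic}, since the present statement is precisely the conjunction of the two under the hypotheses $(\phi_1)-(\phi_4)$ with $\ell = 1$. No genuinely new machinery is needed; the work consists in recording that both ingredients are available in this nonreflexive regime.

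First I would observe that Proposition \ref{pr2} guarantees, under exactly these hypotheses, the existence of at least one weak solution $u \in \w$ satisfying the identity \eqref{sol-fraca2}. This settles the existence part and requires nothing beyond invoking the previous proposition.

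For uniqueness I would argue as in Proposition \ref{unic}. Suppose $u_1, u_2 \in \w$ are two weak solutions of \eqref{p1}. Since $(\phi_1)-(\phi_4)$ imply in particular $(\phi_1)-(\phi_2)$, we may subtract the two weak formulations and test the resulting identity with $v = u_1 - u_2 \in \w$, which is an admissible test function. This yields
\begin{equation*}
\int_{\Omega} (\phi(|\nabla u_1|)\nabla u_1 - \phi(|\nabla u_2|)\nabla u_2)(\nabla u_1 - \nabla u_2)\, dx = 0.
\end{equation*}
By the strict monotonicity of the $\Phi$-Laplacian established in Proposition \ref{L_mono}, the left-hand side is strictly positive whenever $u_1 \neq u_2$; hence we are forced to conclude $u_1 = u_2$.

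The point worth emphasizing, and the only place where one should be careful, is that the uniqueness step is insensitive to the failure of the $\Delta_2$ condition on $\widetilde{\Phi}$: the monotonicity inequality of Proposition \ref{mono} is a pointwise statement in $\mathbb{R}^N$, and the difference $u_1 - u_2$ always belongs to $\w$ and may therefore be used as a test function without any reflexivity or weak-star argument. Consequently there is no real obstacle here, and combining the two parts shows that \eqref{p1} admits exactly one solution $u \in \w$ for each $f \in L^N(\Omega)$.
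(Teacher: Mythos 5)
Your proposal is correct and follows essentially the same route as the paper: existence by citing the nonreflexive existence result and uniqueness via the strict monotonicity of the $\Phi$-Laplacian (Propositions \ref{mono}--\ref{unic}), testing with $u_1-u_2$. If anything, your reference is the more accurate one --- the paper's own sketch invokes Proposition \ref{teo_prin} (the reflexive case $\ell>1$) for existence, whereas Proposition \ref{pr2} is the statement that actually applies when $\ell=1$.
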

\begin{proof}
The proof follows using the same ideas discussed in the proof of Proposition \ref{unic}. The main point here is to ensure that
$\Phi$-Laplace operator is strictly monotonic. As a consequence problem \eqref{p1} admits at most one solution $u \in \w$.
Besides that, using Proposition \ref{teo_prin}, there exists at least one solution $u \in \w$ for the problem \eqref{p1}. We omit the details.
\end{proof}

In what follows we shall consider the elliptic problem \eqref{p2} under superlinear conditions. One more time we define the auxiliary elliptic problem
\begin{equation}\label{aux2}
\left\{\
\begin{array}{l}
\displaystyle- \Delta_{\Phi_{\epsilon}} u = g(x,u),~\mbox{in}~\Omega,\\ \\
u=0~\mbox{on}~\partial \Omega
\end{array}
\right.
\end{equation}
where $\epsilon > 0$ and $\Phi_{\epsilon}(t) =\frac\epsilon m t^{m} + \Phi(t), t \geq 0$. Here is important to recover the definition for weak solution $u \in W_{0}^{1,\Phi_{\epsilon}}(\Omega)$ to the problem \eqref{aux2} which is given by
\begin{equation*}
\int_{\Omega} \phi_{\epsilon}(|\nabla u|) \nabla u \nabla w dx = \int_{\Omega} g(x, u) w dx, w \in W_{0}^{1,\Phi_{\epsilon}}(\Omega).
\end{equation*}
Weak solution for this problem are precisely the critical point for the functional $J : W_{0}^{1,\Phi_{\epsilon}}(\Omega) \rightarrow \mathbb{R}$
given by
\begin{equation*}
J(u) = \int_{\Omega} \Phi_{\epsilon}(|\nabla u|) dx - \int_{\Omega} G(x, u)dx
\end{equation*}
where $G(x, t) = \int_{0}^{t} g(x,s)ds, t \in \mathbb{R}, x \in \Omega$. As a consequence finding weak solutions to the problem \eqref{p2} is equivalent to find critical points for $J$. Using the approximated problem \eqref{aux2} we observe that $J$ satifies the Cerami condition for any
$\epsilon > 0$, see Carvalho et al \cite{JVMLED}. In addition, using hypotheses $(\phi_{1})-(\phi_{3})$ and $(g_{1})-(g_{4})$, the functional $J$ possesses the mountain pass geometry, see Carvalho et al \cite{JVMLED}. In this way, we shall consider the following existence result

\begin{prop}
Suppose $(\phi_{1})- (\phi_{3})$ where $\ell = 1$. Assume also that $(g_{1})-(g_{4})$ holds. Then the problem \eqref{aux2} admits at least one weak solution in $u_{\epsilon} \in W_{0}^{1,\Phi_{\epsilon}}(\Omega)$ for each $\epsilon > 0$. Furthermore, using regularity results, we also mention that $u_{\epsilon}$ is in $C^{1,\alpha_\epsilon}(\overline\Omega)$, for some $\alpha_\epsilon>0$.
\end{prop}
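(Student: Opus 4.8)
The plan is to derive the existence of $u_\epsilon$ from the Mountain Pass Theorem in its Cerami form, carried out in the reflexive Orlicz--Sobolev space $W^{1,\Phi_\epsilon}_0(\Omega)$, and then to upgrade the weak solution to $C^{1,\alpha_\epsilon}(\overline\Omega)$ by exploiting the equivalence of $\Phi_\epsilon$ with $t^m$. First I would fix $\epsilon>0$ and invoke Lemma \ref{ell-epsilon}, which gives $1<\ell_\epsilon\le \Phi_\epsilon'(t)t/\Phi_\epsilon(t)\le m<N$; by Remark \ref{reflex} both $\Phi_\epsilon$ and $\widetilde{\Phi_\epsilon}$ then satisfy the $\Delta_2$ condition, so $W^{1,\Phi_\epsilon}_0(\Omega)$ is a separable reflexive Banach space and the functional $J$ is well defined and of class $C^1$ there.

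Next I would record the two structural facts already cited from Carvalho et al. \cite{JVMLED}. The mountain pass geometry supplies $\rho,\alpha>0$ with $J(u)\ge\alpha$ whenever $\|u\|_{W^{1,\Phi_\epsilon}_0}=\rho$, where the control near the origin comes from $(g_4)$ (through $\lambda<\lambda_1$) together with the growth bound $(g_1)$, and it supplies some $e$ with $\|e\|>\rho$ and $J(e)<0$, which reflects the superlinearity $(g_3)$. Since in addition $J$ satisfies the Cerami condition for this fixed $\epsilon$, the minimax value
$$c_\epsilon=\inf_{\gamma\in\Gamma}\max_{t\in[0,1]}J(\gamma(t)),$$
taken over $\Gamma=\{\gamma\in C([0,1];W^{1,\Phi_\epsilon}_0(\Omega)):\gamma(0)=0,\ \gamma(1)=e\}$, satisfies $c_\epsilon\ge\alpha>0$, and the Cerami version of the Mountain Pass Theorem produces $u_\epsilon$ with $J(u_\epsilon)=c_\epsilon$ and $J'(u_\epsilon)=0$. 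In particular $u_\epsilon\neq0$ is a weak solution of \eqref{aux2}.

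For the regularity I would use Lemma \ref{ell-epsilon}(iv): because $\Phi_\epsilon$ is equivalent to $t^m$, one has $W^{1,\Phi_\epsilon}_0(\Omega)=W^{1,m}_0(\Omega)$ with equivalent norms, so $u_\epsilon\in W^{1,m}_0(\Omega)$ solves a quasilinear equation with $m$-type principal part. The first step is an $L^\infty$ bound: by $(g_1)$ and $(\psi_1)$ the growth exponent $m_\Psi$ of $g$ lies below $1^*=N/(N-1)$ and hence strictly below the Sobolev exponent $m^*=mN/(N-m)$, so $g(\cdot,u_\epsilon)$ is subcritical and a Moser--De Giorgi iteration for $m$-Laplacian-type operators yields $u_\epsilon\in L^\infty(\Omega)$. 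Since $g$ is continuous with $g(x,0)=0$, the right-hand side $g(\cdot,u_\epsilon)$ is then bounded, and the structural inequalities of Lemma \ref{ell-epsilon}(ii) place $-\Delta_{\Phi_\epsilon}$ within the scope of Lieberman's interior and boundary estimates, giving $u_\epsilon\in C^{1,\alpha_\epsilon}(\overline\Omega)$ for some $\alpha_\epsilon\in(0,1)$.

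The hard part will be the $L^\infty$ estimate, where one must verify that the subcritical growth furnished by $(g_1)$--$(\psi_1)$ is compatible with the $\epsilon$-dependent operator; but since $\epsilon$ is fixed here and $\Phi_\epsilon\sim t^m$, this reduces to the classical boundedness result for the $m$-Laplacian with a subcritical nonlinearity, and the final passage from $L^\infty$ to $C^{1,\alpha_\epsilon}$ is a direct citation of Lieberman's regularity theorem.
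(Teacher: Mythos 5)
Your proposal is correct and follows essentially the same route as the paper: reflexivity of $W^{1,\Phi_\epsilon}_0(\Omega)$ from $\ell_\epsilon>1$, the mountain pass theorem under the Cerami condition with the geometry and compactness taken from Carvalho et al.\ \cite{JVMLED}, and regularity via an $L^\infty$ bound followed by Lieberman's estimates. The paper compresses all of this into a citation of \cite{JVMLED}, so your write-up simply supplies the details (in particular the reduction to the $m$-Laplacian via $\Phi_\epsilon\sim t^m$ and the subcriticality check $m_\Psi<1^*<m^*$) that the authors omit.
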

\begin{proof}
First of all, we recall that $W_{0}^{1,\Phi_{\epsilon}}(\Omega)$ is Banach reflexive due the fact that $\ell_{\epsilon} > 1$ for each $\epsilon > 0$.
As a consequence, using the mountain pass theorem, we know that the Problem \eqref{p2} admits at least one solution $u_{\epsilon} \in W_{0}^{1,\Phi_{\epsilon}}(\Omega)\cap C^{1,\alpha_\epsilon}(\overline\Omega)$ for each $\epsilon > 0$, see Carvalho et al \cite{JVMLED}.  We omit the details.
\end{proof}

\begin{prop}\label{l0}
Suppose $(\phi_{1})- (\phi_{4})$ where $\ell = 1$. Assume also that $(g_{1})-(g_{4})$ holds. Then the problem \eqref{p2} admits at least one weak solution in $u\in \w$.
\end{prop}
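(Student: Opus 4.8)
The plan is to obtain the solution of the nonreflexive problem \eqref{p2} as a limit of the solutions $u_\epsilon\in W^{1,\Phi_\epsilon}_0(\Omega)$ of the approximating problems \eqref{aux2}, exactly mirroring the argument already carried out for the linear problem \eqref{p1} in Proposition \ref{pr2}. The previous proposition guarantees, for each $\epsilon>0$, a weak solution $u_\epsilon$ obtained via the mountain pass theorem, where $J$ enjoys the Cerami condition and the mountain pass geometry. The first step is therefore to establish a uniform bound, independent of $\epsilon$, on the family $(u_\epsilon)$ in $\w$ (and in $W^{1,1}_0(\Omega)$). The second step is to pass to the weak star limit, recover almost everywhere convergence of the gradients through the $(S)^+$ property of the $\Phi$-Laplacian (Proposition \ref{S+}), and then identify the limit $u$ as a weak solution of \eqref{p2}.

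\textbf{Uniform bounds and the role of the Cerami sequences.} First I would exploit that each $u_\epsilon$ is a critical point of the mountain pass type, so that the associated critical value $c_\epsilon$ is characterized by a min-max over paths. Using hypotheses $(g_1)$--$(g_4)$, and in particular the non-Ambrosetti--Rabinowitz growth controlled through $(g_2)$ and the N-function $\Gamma$ with $\ell_\Gamma>N$, one shows the Cerami value $c_\epsilon$ is bounded above uniformly in $\epsilon$ (the comparison functions used to build the mountain pass paths can be taken $\epsilon$-independent, e.g. fixed $C_0^\infty$ functions, and $\Phi_\epsilon\to\Phi$ by Lemma \ref{ell-epsilon}(i)). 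From $J(u_\epsilon)=c_\epsilon$ and $J'(u_\epsilon)=0$ one forms the combination $J(u_\epsilon)-\frac1m\langle J'(u_\epsilon),u_\epsilon\rangle$; the quantity $\overline G(x,u_\epsilon)=u_\epsilon g(x,u_\epsilon)-mG(x,u_\epsilon)$ appearing in $(g_2)$ is precisely what this combination produces, and the inequality in $(g_2)$ relating $\Gamma(G/|t|^\ell)$ to $\overline G$ yields coercive control on $\int_\Omega\Phi(|\nabla u_\epsilon|)\,dx$. Together with Proposition \ref{lema_naru} this gives a bound on $\|u_\epsilon\|$ uniform in $\epsilon$, and the argument of Proposition \ref{pr1} transfers the bound to $W^{1,1}_0(\Omega)$.

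\textbf{Passage to the limit.} With the uniform bound in hand, extract $u_\epsilon\stackrel{*}\rightharpoonup u$ in $\w$ using the embedding into the dual product $\bigl(\prod_{j=1}^{N+1}E_\Phi\bigr)^\star$ and the compact embeddings $\w\hookrightarrow L_\Psi(\Omega)$ for $\Psi<<\Phi_*$ recorded in Section 2. The compact embedding delivers $u_\epsilon\to u$ strongly in the Orlicz space controlling $g$, so that $g(x,u_\epsilon)\to g(x,u)$ and $\int_\Omega g(x,u_\epsilon)(u_\epsilon-U_k)\,dx$ can be handled via the growth bound $|g(x,t)|\le C(1+\psi(t))$ of $(g_1)$ together with $(\psi_1)$. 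I would then repeat, essentially verbatim, the density argument of Proposition \ref{pr2}: approximate $u$ by $U_k\in C_0^\infty(\Omega)$, use $u_\epsilon-U_k$ as a test function in \eqref{aux2}, send the penalization term $\epsilon\langle-\Delta_m u_\epsilon,U_k\rangle\to0$ using the uniform $W^{1,m}_0$ bound, and conclude $\limsup_{\epsilon\to0}\langle-\Delta_\Phi u_\epsilon,u_\epsilon-u\rangle\le0$. The $(S)^+$ property (Proposition \ref{S+}) then gives $u_\epsilon\to u$ in $\w$, hence $\nabla u_\epsilon\to\nabla u$ almost everywhere by the result of Dal Maso et al., and a dominated convergence argument identical to the one in Proposition \ref{pr2} passes the limit inside $\int_\Omega\phi(|\nabla u_\epsilon|)\nabla u_\epsilon\nabla v\,dx$, yielding \eqref{sol-fraca}.

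\textbf{Main obstacle.} The delicate point, and the genuine difference from Proposition \ref{pr2}, is the uniform-in-$\epsilon$ boundedness of $(u_\epsilon)$ in the absence of the $(AR)$ condition. Unlike the right-hand side $f\in L^N(\Omega)$ of \eqref{p1}, here the nonlinearity $g(x,u_\epsilon)$ depends on the unknown and is superlinear, so the simple test-function estimate of Proposition \ref{pr1} is not available; one must instead exploit the Cerami structure and hypothesis $(g_2)$ carefully to extract coercivity, checking that all constants (the mountain pass level, the Cerami bound, the embedding constants) are independent of $\epsilon$ as $\epsilon\to0$. Once this uniform bound is secured, the remainder of the proof is a faithful adaptation of the nonreflexive limiting scheme already established for \eqref{p1}.
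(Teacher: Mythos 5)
Your proposal follows exactly the route the paper intends: the paper's own ``proof'' of this proposition is a one-line reference back to the limiting scheme of Proposition \ref{pr2}, which is precisely the approximation-plus-$(S)^{+}$ argument you describe. You in fact go further than the paper by isolating the one genuinely new difficulty --- the uniform-in-$\epsilon$ bound on the mountain pass solutions $u_{\epsilon}$, obtained from the combination $J(u_{\epsilon})-\frac{1}{m}\langle J'(u_{\epsilon}),u_{\epsilon}\rangle$ and hypothesis $(g_{2})$ rather than from the test-function estimate of Proposition \ref{pr1} --- a point the paper leaves entirely implicit.
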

\begin{proof}
The proof follows along to the same lines discussed in the proof of Proposition \ref{pr2}. Here we omit the proof.
\end{proof}

At this moment we shall consider the truncation functions given by
\begin{equation*}
g^{+}(x,t) = \left\{\
\begin{array}{l}
g(x, t), t \geq 0, x \in \Omega \\ \\
0, t < 0, x \in \Omega
\end{array}
\right.
\end{equation*}
and
\begin{equation*}
g^{-}(x,t) = \left\{\
\begin{array}{l}
g(x, t), t \leq 0, x \in \Omega \\ \\
0, t > 0, x \in \Omega.
\end{array}
\right.
\end{equation*}
At the same time we define the functionals $J^{\pm} : W^{1,\Phi_{\epsilon}}_{0}(\Omega) \rightarrow \mathbb{R}$ given by
\begin{equation*}
J^{\pm}(u) = \int_{\Omega} \Phi_{\epsilon}(|\nabla u|) dx - \int_{\Omega} G^{\pm}(x, u)dx
\end{equation*}
where $G^{\pm}(x, t) = \int_{0}^{t} g^{\pm}(x,s)ds, t \in \mathbb{R}, x \in \Omega$. It is not hard to verify that $J^{\pm}$
admits the mountain pass geometry. As a consequence we shall consider the following result

\begin{prop}\label{l1}
Suppose $(\phi_{1})- (\phi_{4})$ and $\ell = 1$ holds true. Assume also that $(g_{1})-(g_{4})$ holds. Then the problem \eqref{p2} admits at least two nontrivial weak solutions $u_{1}, u_{2} \in \w$.
\end{prop}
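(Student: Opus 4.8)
The plan is to mirror the single-solution arguments of Propositions \ref{l0} and \ref{pr2}, but to run them separately on the two truncated functionals $J^{+}$ and $J^{-}$ so as to manufacture one nonnegative and one nonpositive solution. Fix $\epsilon > 0$. Since $\ell_\epsilon > 1$, the space $W^{1,\Phi_\epsilon}_0(\Omega)$ is reflexive and, because $J^{\pm}$ inherits the mountain pass geometry and the Cerami condition from the analysis of Carvalho et al \cite{JVMLED}, the mountain pass theorem yields critical points $u_\epsilon^{+}$ and $u_\epsilon^{-}$ of $J^{+}$ and $J^{-}$ respectively, at positive mountain pass levels $c_\epsilon^{\pm} > 0$.

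First I would fix the sign of the approximate solutions. Testing the equation for $u_\epsilon^{+}$ with its negative part $w = \min\{u_\epsilon^{+}, 0\}$ and using $g^{+}(x,t) = 0$ for $t \leq 0$, the right-hand side vanishes, so $\int_\Omega \phi_\epsilon(|\nabla w|)|\nabla w|^2 dx \leq 0$, which forces $w = 0$ and hence $u_\epsilon^{+} \geq 0$; symmetrically $u_\epsilon^{-} \leq 0$. On the set where $u_\epsilon^{+} \geq 0$ one has $g^{+}(x, u_\epsilon^{+}) = g(x, u_\epsilon^{+})$, so $u_\epsilon^{+}$ (resp.\ $u_\epsilon^{-}$) is in fact a signed weak solution of the approximate problem \eqref{aux2}.

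Next I would obtain uniform-in-$\epsilon$ bounds. Using that $(g_2)$ together with $(\gamma_1)$ replaces the Ambrosetti--Rabinowitz condition, the Cerami structure bounds the mountain pass energies $c_\epsilon^{\pm}$ and the critical points independently of $\epsilon$; arguing as in Proposition \ref{pr1} with $(\phi_3)$ one then bounds $(u_\epsilon^{\pm})$ in $\w$ and in $W^{1,1}_0(\Omega)$. Consequently $u_\epsilon^{\pm} \stackrel{*}\rightharpoonup u_{1}, u_{2}$ in $\w$ along subsequences, with $u_1 \geq 0$ and $u_2 \leq 0$ by sign preservation. Invoking the $(S^{+})$ property (Proposition \ref{S+}) together with the density and dominated convergence argument of Proposition \ref{pr2}, the convergence upgrades to strong convergence in $\w$ and the limits satisfy $\int_\Omega \phi(|\nabla u_i|)\nabla u_i \nabla v\, dx = \int_\Omega g(x, u_i) v\, dx$ for all $v \in \w$, so $u_1, u_2$ are weak solutions of \eqref{p2} of the prescribed signs.

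The main obstacle I expect is twofold: first, producing uniform-in-$\epsilon$ bounds for the mountain pass solutions without the $(AR)$ condition, which requires carefully exploiting $(g_2)$, $(\gamma_1)$ and the lower estimate on $\ell_\epsilon$ so that the Cerami bounds do not degenerate as $\epsilon \to 0$; and second, ensuring that nontriviality survives the weak-star limit. For the latter I would show that the mountain pass geometry is uniform in $\epsilon$, i.e.\ there exist $\rho, \alpha > 0$ independent of $\epsilon$ with $J^{\pm}(u) \geq \alpha$ whenever $\|u\| = \rho$, so that $c_\epsilon^{\pm} \geq \alpha > 0$; passing to the limit keeps the energy of $u_1, u_2$ bounded below away from zero, whence $u_1, u_2 \neq 0$. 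Since $u_1 \geq 0$ and $u_2 \leq 0$ are both nontrivial, they are distinct, completing the proof.
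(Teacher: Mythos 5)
Your proposal follows essentially the same route as the paper: mountain pass for the truncated functionals $J^{\pm}$ on the reflexive approximating spaces, a uniform-in-$\epsilon$ lower bound on the mountain pass levels (the paper's constant $r_0>0$, obtained from $(g_4)$ and $(\psi_1)$ exactly as you suggest) to preserve nontriviality, weak-star convergence plus the $(S^{+})$ property to pass to the limit, and sign identification by testing with the negative/positive part. The only cosmetic difference is that you fix the sign at the $\epsilon$-level while the paper does it for the limits $u_1,u_2$; both work.
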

\begin{proof}
The proof follows using the Mountain Pass Theorems for the functionals $J^{\pm}$. One more time we mention that $J^{\pm}$
satisfies the Cerami condition for each $\epsilon > 0$, see Carvalho et al \cite{JVMLED}. In this way we obtain two sequences
$u^{+}_{\epsilon}, u^{-}_{\epsilon} \in W^{1,\Phi_{\epsilon}}_{0}(\Omega)$ of critical points for $J^{+}$ and $J^{-}$, respectively.

At this stage we claim that there exists $r_{0} > 0$ in such way that $J^{\pm}(u^\pm_\epsilon)\geq r_0$ where
$r_{0}$ does not depend on $\epsilon > 0$. In fact, using $(\psi_1)$ and $(g_4)$, given $0<\eta<\lambda_1$ there exist $C,\delta>0$ such that
	$$G^{\pm}(x,t)<(\lambda_1-\eta)\Phi(t)+C\Psi(t),~t\in \mathbb{R}.$$
Hence, taking into account Poincar\'e inequality and using the estimate $\Phi_\epsilon(t)\geq \Phi(t),~t\in\mathbb{R}$, $\w\hookrightarrow L_\Psi(\Omega)$ , we mention that
	\begin{eqnarray}
		J_\epsilon^{\pm}(u) &\geq& \int_\Omega \Phi_\epsilon(|\nabla u|)dx-(\lambda_1-\eta)\int_\Omega\Phi(u)dx-C\int_\Omega \Psi(u)dx\nonumber\\
		&\geq& \frac{\eta}{\lambda_1}\int_\Omega \Phi(|\nabla u|)dx-C\int_\Omega \Psi(u)dx\nonumber\\
		&\geq& \frac{\eta}{\lambda_1}\min\{\|u\|,\|u\|^m\}-C\max\{\|u\|^{\ell_\Psi},\|u\|^{m_\Psi}\}\nonumber\\
		&=&\|u\|^m\left(\frac{\eta}{\lambda_1}-C\|u\|^{\ell_{\Psi}-m}\right). \nonumber
	\end{eqnarray}
holds true for any $\|u\|\leq 1$ holds true for any $\|u\|\leq 1$.

Now using the same ideas discussed in the proof of Proposition \ref{pr2} we point out that $u_{\epsilon}^{+} \stackrel{*}\rightharpoonup u_{1}$ and $u_{\epsilon}^{-} \stackrel{*}\rightharpoonup u_{2}$ in the weak star topology. Furthermore, the functional $J^{\pm}$ is weak star lower semicontinuous. Now applying Proposition \ref{S+} we deduce that $u_{n}^{+} \rightarrow u_{1}$ and $u_{n}^{-} \rightarrow u_{2}$ in $\w$. Hence, taking the negative part of $u_{1}$ as testing function, we obtain that $u_{1} \geq 0$ in $\Omega$. Similarly, we also obtain $u_{2} \leq 0$ in $\Omega$. As a consequence $u_{1}, u_{2}$ are nontrivial critical points to the functional $J$ which give us weak nontrivial solutions to the elliptic problem \eqref{p2}.  This finishes the proof.
\end{proof}

\section{Regularity results on quasilinear elliptic problems}

In this section we prove a regularity result for the problem (\ref{p1}).

\begin{theorem}\label{taux} Assume that there exist positive constants $C_1,C_2$ such that \begin{equation}C_1\le \frac{\Phi(t)}{t^m}\le C_2,\ \forall\ t>0.\label{equim}\end{equation}

If $f\in L^q(\Omega)$ with $q>N/m$ and $u$ is a solution of $\eqref{p1}$ then, $u\in L^\infty(\Omega)$.
\end{theorem}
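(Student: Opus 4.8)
The plan is to establish the bound by Stampacchia's truncation (level-set) method, exploiting that the equivalence \eqref{equim} forces the operator to dominate, from below, the $m$-Laplacian. Observe first that since $\Phi$ is equivalent to $t^m$ with $m>1$, we have $\w=W^{1,m}_0(\Omega)$, so every truncation of $u$ belongs to the space and, because $m<N$, the Sobolev embedding $W^{1,m}_0(\Omega)\hookrightarrow L^{m^*}(\Omega)$ with $m^*=\tfrac{Nm}{N-m}$ is at my disposal.

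For $k>0$ set $A_k=\{x\in\Omega:\ u(x)>k\}$ and insert the test function $v=(u-k)^+:=\max\{u-k,0\}\in\w$ into the weak formulation \eqref{sol-fraca2}. Since $\nabla v=\nabla u\,\chi_{A_k}$, this gives
\[
\int_{A_k}\phi(|\nabla u|)\,|\nabla u|^2\,dx=\int_{A_k}f\,(u-k)\,dx.
\]
As $\Phi$ is convex with $\Phi(0)=0$ and $\Phi'(t)=t\phi(t)$, one has $\phi(t)t^2=t\Phi'(t)\ge\Phi(t)$, so \eqref{equim} yields $\phi(|\nabla u|)|\nabla u|^2\ge C_1|\nabla u|^m$. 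Combining this with the Sobolev inequality produces
\[
\|(u-k)^+\|_{m^*}^m\le C\int_{A_k}|\nabla u|^m\,dx\le\frac{C}{C_1}\int_{A_k}|f|\,(u-k)\,dx .
\]

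Next I would bound the right-hand side by Hölder's inequality with the three exponents $q$, $m^*$ and $(1-\tfrac1q-\tfrac1{m^*})^{-1}$ (legitimate because $\tfrac1q+\tfrac1{m^*}<1$ whenever $m<N$), obtaining
\[
\int_{A_k}|f|\,(u-k)\,dx\le\|f\|_q\,\|(u-k)^+\|_{m^*}\,|A_k|^{\,1-\frac1q-\frac1{m^*}} .
\]
Cancelling one factor $\|(u-k)^+\|_{m^*}$ leaves $\|(u-k)^+\|_{m^*}^{\,m-1}\le C'\|f\|_q\,|A_k|^{\,1-\frac1q-\frac1{m^*}}$. For $h>k$ we have $A_h\subset A_k$ and $u-k>h-k$ on $A_h$, hence $(h-k)|A_h|^{1/m^*}\le\|(u-k)^+\|_{m^*}$, which yields the decay inequality
\[
|A_h|\le\frac{C''\,\|f\|_q^{\,m^*/(m-1)}}{(h-k)^{m^*}}\,|A_k|^{\,\beta},\qquad \beta=\frac{m^*}{m-1}\Big(1-\frac1q-\frac1{m^*}\Big),
\]
valid for all $h>k>0$, where $k\mapsto|A_k|$ is finite and nonincreasing.

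The decisive point, and the only genuine computation, is to check that $\beta>1$: a direct manipulation shows $\beta>1$ is equivalent to $m^*(1-\tfrac1q)>m$, i.e. to $\tfrac1q<\tfrac mN$, that is exactly to $q>N/m$, the borderline $q=N/m$ giving $\beta=1$ (which is why the hypothesis is sharp). With $\beta>1$ secured, Stampacchia's lemma applies to $\psi(k):=|A_k|$ and produces a finite level $k^*$ with $|A_{k^*}|=0$, i.e. $u\le k^*$ almost everywhere. Finally, since $-u$ solves \eqref{p1} with datum $-f\in L^q(\Omega)$ (the operator $-\Delta_\Phi$ being odd), the same argument applied to $-u$ gives $u\ge -k^*$ a.e., and therefore $u\in L^\infty(\Omega)$. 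I expect the main obstacle to be purely bookkeeping, namely pinning down the Hölder exponents so that the resulting power $\beta$ is controlled precisely by the threshold $q>N/m$.
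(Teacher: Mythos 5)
Your argument is correct, but it follows a genuinely different route from the paper. The paper runs a Moser iteration: it tests with $\varphi=\eta^m(\overline{u}_s^{m\alpha}\overline{u}-k^{m\alpha+1})$ for cutoffs $\eta$ supported in shrinking interior annuli $\Omega_{R_n}$, bootstraps $\|\overline{u}\|_{L^{m\chi^n}(\Omega_{R_n})}$ through the Sobolev inequality with $\chi=N/(N-m)$, and obtains an interior bound $\|u\|_{L^\infty(\Omega_{R_1})}$; it then extends $u$ and $f$ by zero outside $\Omega$ and repeats the argument to reach the boundary. You instead use the De Giorgi--Stampacchia level-set method: testing with $(u-k)^+$, reducing to the $m$-Laplacian via $\phi(t)t^2\ge\Phi(t)\ge C_1t^m$, and deriving the decay $|A_h|\le C(h-k)^{-m^*}|A_k|^{\beta}$ with $\beta>1$ precisely when $q>N/m$ (your computation of this equivalence is right, and it is the same place where the exponent $q>N/m$ enters the paper's iteration). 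Your route is shorter, gives a global bound in one pass, and avoids both the cutoff bookkeeping and the extension-to-the-boundary step; the paper's Moser scheme is heavier but yields local $L^\infty$ estimates on $\Omega_{R_1}$ in terms of $\|\overline u\|_{L^{m\chi^{n_0}}(\Omega_{R_{n_0}})}$, which is the kind of quantitative interior information one sometimes wants downstream. One small point worth making explicit in your write-up: the cancellation of one factor of $\|(u-k)^+\|_{m^*}$ and the definition of $m^*$ require $1<m<N$; this is automatic here, since \eqref{equim} for an N-function forces $m>1$ and $(\phi_3)$ gives $m<N$, but it should be said.
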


The proof uses Moser's iteration technique and goes as follows:

\begin{proof} For $R>0$, define $\Omega_R=\{x\in \Omega:\ \operatorname{dist}(x,\partial\Omega)>R\}$. For $0<R_2<R_1$, let $\varphi=\eta^m(\overline{u}_s^{m\alpha}\overline{u}-k^{m\alpha+1})$ where $\alpha$ is a parameter to be choosen conveniently later, $\overline{u}=\max\{u,k\}$ for $k>0$, $\overline{u}_s=\min\{\overline{u},s\}$ and $\eta\in C^1_0(\Omega)$ satisfies   $\eta=1$ in $\Omega_{R_1}$, $\eta=0$ in $\Omega\setminus \Omega _{R_2}$, $\eta \ge 0$ and $|\nabla \eta|\le C/(R_1-R_2)$ for some positive constant $C$.

Note that $\varphi\in W_0^{1,\Phi}(\Omega)$ and $\nabla \varphi=\eta^m[m\alpha \overline{u}_s^{m\alpha-1}\overline{u}\nabla \overline{u}_s+\overline{u}_s^{m\alpha}\nabla\overline{u}]+m\eta^{m-1}(\overline{u}_s^{m\alpha}\overline{u}-k^{m\alpha+1})\nabla \eta$. We substitute it in the equation (\ref{p1}) to find that

\begin{equation}\label{M1}\begin{split}& m\alpha\int \eta^m\overline{u}_s^{m\alpha-1}\overline{u}\phi(|\nabla u|)\nabla u\nabla \overline{u}_s+\int\eta^m\overline{u}_s^{m\alpha}\phi(|\nabla u|)\nabla u\nabla \overline{u} \\ &+ m\int \eta^{m-1}(\overline{u}_s^{m\alpha}\overline{u}-k^{m\alpha+1})\phi(|\nabla u|)\nabla u\nabla \eta\int f\eta^m(\overline{u}_s^{m\alpha}\overline{u}-k^{\alpha m+1}).\end{split}
\end{equation}

For $u\le k$ we have that $\nabla \overline{u},\nabla \overline{u}_s=0$ and $(\overline{u}_s^{m\alpha}\overline{u}-k^{m\alpha+1})=0$, therefore, from (\ref{M1}) we must conclude

\begin{equation}\label{M2}\begin{split}& m\alpha\int \eta^m\overline{u}_s^{m\alpha-1}\overline{u}\phi(|\nabla \overline{u}_s|)|\nabla \overline{u}_s|^2+\int\eta^m\overline{u}_s^{m\alpha}\phi(|\nabla \overline{u}|)|\nabla \overline{u}|^2 \\ &+ m\int \eta^{m-1}(\overline{u}_s^{m\alpha}\overline{u}-k^{m\alpha+1})\phi(|\nabla \overline{u}|)\nabla \overline{u}\nabla \eta=\int f\eta^m(\overline{u}_s^{m\alpha}\overline{u}-k^{\alpha m+1}).\end{split}\end{equation}

Note that $(\overline{u}_s^{m\alpha}\overline{u}-k^{m\alpha+1})\le \overline{u}_s^{m\alpha}\overline{u}$, which implies that $ m\eta^{m-1}(\overline{u}_s^{m\alpha}\overline{u}-k^{m\alpha+1})\phi(|\nabla \overline{u}|)\nabla \overline{u}\nabla \eta\le m\eta^{m-1}\overline{u}_s^{m\alpha}\overline{u}\phi(|\nabla \overline{u}|)|\nabla \overline{u}||\nabla \eta|$. From Young's inequality we obtain that \begin{equation}\label{M3}
 m\eta^{m-1}(\overline{u}_s^{m\alpha}\overline{u}-k^{m\alpha+1})\phi(|\nabla \overline{u}|)\nabla \overline{u}\nabla \eta\le m( \tilde{\Phi}(\epsilon\eta^{m-1}\phi(|\nabla \overline{u}||\nabla \overline{u}|))+\Phi(\overline{u}|\nabla \eta|/\epsilon))\overline{u}_s^{m\alpha}. \end{equation}

Moreover, by the $\Delta_2$ condition
\begin{align*}
\Phi(\overline{u}|\nabla \eta|/\epsilon) &\le \max\{(|\nabla \eta|/\epsilon)^\ell,(|\nabla \eta|/\epsilon)^m\}\Phi(\overline{u}) \\ &= g_1(x,\epsilon)\Phi(\overline{u}),
\end{align*}
where $g_1(x,\epsilon)=\max\{(|\nabla \eta|/\epsilon)^\ell,(|\nabla \eta|/\epsilon)^m\}$. Again, by the $\Delta_2$ condition

\begin{align*}\tilde{\Phi}(\epsilon\eta^{m-1}\phi(|\nabla \overline{u}||\nabla \overline{u}|)) &\le \max\{(\epsilon\eta^{m-1} )^{\ell'},(\epsilon\eta^{m-1})^{m'}\}\tilde{\Phi}(|\nabla \overline{u}||\nabla \overline{u}|) \\ &= g_2(x,\epsilon) \tilde\Phi(\phi(|\nabla \overline{u}|)|\nabla \overline{u}|),\end{align*}

where $g_2(x,\epsilon)=\max\{(\epsilon\eta^{m-1} )^{\ell'},(\epsilon\eta^{m-1})^{m'}\}$. We remember that $\tilde{\Phi}(\phi(t)t)\le C\Phi(t)\le C\phi(t)t^2$ for some constant $C$, thus, from (\ref{M3}) and the last inequalities we conclude

 \begin{align}\label{M4}
 m\int \eta^{m-1}(\overline{u}_s^{m\alpha}\overline{u}-k^{m\alpha+1})\phi(|\nabla \overline{u}|)\nabla \overline{u}\nabla \eta \nonumber\le & \int m[ \tilde{\Phi}(\epsilon\eta^{m-1}\phi(|\nabla \overline{u}||\nabla \overline{u}|)) \\ &+\Phi(\overline{u}|\nabla \eta|/\epsilon)]\overline{u}_s^{m\alpha},\nonumber \\   \le & \int  Cm[g_2(x,\epsilon) \phi(|\nabla \overline{u}|)|\nabla \overline{u}|^2\nonumber \\& +g_1(x,\epsilon)\phi(\overline{u})\overline{u}^2]\overline{u}_s^{m\alpha}.\end{align}

 We combine (\ref{M2}) with (\ref{M4}) to obtain that

 \begin{equation}\begin{split}  m\alpha\int \eta^m\overline{u}_s^{m\alpha-1}\overline{u}&\phi(|\nabla \overline{u}_s|)|\nabla \overline{u}_s|^2+\int(\eta^m-Cmg_2(x,\epsilon)) \overline{u}_s^{m\alpha}\phi(|\nabla \overline{u}|)|\nabla \overline{u}|^2\\ & \le \int Cmg_1(x,\epsilon)\phi(\overline{u})\overline{u}^2\overline{u}_s^{m\alpha}+\int |f|\eta^m\overline{u}_s^{m\alpha}\overline{u} . \end{split}\label{M5}\end{equation}

Note that for small $\epsilon$, $g_2(x,\epsilon)= \epsilon^{m'} \eta^{(m-1)m'}=\epsilon^{m'} \eta^{m}$, which implies also that $\eta^m-Cmg_2(x,\epsilon)=\eta^m -Cm\epsilon^{m'} \eta^{m}=\eta^m(1-Cm\epsilon^{m'})$ and thus, we can choose a small $\epsilon$ in such a way that $\eta^m-Cmg_2(x,\epsilon)>0$, hence, by fixing such a $\epsilon$ and combining with the fact that $\overline{u}_s\le \overline{u}$, $|\nabla \overline{u}_s|\le |\nabla \overline{u}|$, we conclude from (\ref{M5}) that

 \begin{equation}\begin{split}  (m\alpha+1-Cm\epsilon^{m'})\int \eta^m\overline{u}_s^{m\alpha}\phi(|\nabla \overline{u}_s|)|\nabla \overline{u}_s|^2 \le& \int Cmg_1(x,\epsilon)\phi(\overline{u})\overline{u}^2\overline{u}_s^{m\alpha} \\ &+\int |f|\eta^m\overline{u}_s^{m\alpha}\overline{u} . \end{split}\label{M6}\end{equation}

 Now, we shall study the term with the $f$ function in (\ref{M6}). We begin by noting that $\overline{u}\ge k$ implies that $\overline{u}^{m-1}\ge k^{m-1}$, whence, $\overline{u}\le \overline{u}^m/k^{m-1}$, therefore, by using the Holder, Young and interpolation inequalities we have that

 \begin{align}
 \int |f|\eta^m\overline{u}_s^{m\alpha}\overline{u} &\le \|f\|_q\|\eta^m\overline{u}_s^{m\alpha}\overline{u}\|_{q'},\nonumber \\
 &\le \frac{\|f\|_q}{k^{m-1}}\|\eta^m\overline{u}_s^{m\alpha}\overline{u}\|_{q'}, \nonumber\\
 &\le \frac{\|f\|_q}{k^{m-1}}\|\eta\overline{u}_s^{\alpha}\overline{u}\|_m^{m(1-N/mq)}\|\eta\overline{u}_s^{\alpha}\overline{u}\|_{m^\star}^{mN/mq}\label{M7} \\
 &\nonumber \le \frac{\|f\|_q}{k^{m-1}\delta^{mq/(mq-N)}}\int \eta^r\overline{u}_s^{m\alpha}\overline{u}^m+\frac{\|f\|_q\delta^{mq/N}}{k^{m-1}}\|\eta\overline{u}_s^{\alpha}\overline{u}\|_{m^\star}^m,
\end{align}
where $\delta>0$ will be chosen later and $m^\star$ denotes the Sobolev critical exponent $mN/(N-m)$. From (\ref{M6}) and (\ref{M7}) we have that

\begin{equation}\begin{split}  (m\alpha+&1-Cm\epsilon^{m'})\int \eta^m\overline{u}_s^{m\alpha}\phi(|\nabla \overline{u}_s|)|\nabla \overline{u}_s|^2 \le \int Cmg_1(x,\epsilon)\phi(\overline{u})\overline{u}^2\overline{u}_s^{m\alpha} \\ &+\frac{\|f\|_q}{k^{m-1}\delta^{mq/(mq-N)}}\int \eta^m\overline{u}_s^{m\alpha}\overline{u}^m+\frac{\|f\|_q\delta^{mq/N}}{k^{m-1}}\|\eta\overline{u}_s^{\alpha}\overline{u}\|_{m^\star}^m . \end{split}\label{M8}\end{equation}

To proceed, notice that $|\nabla(\eta \overline{u}_s^{\alpha+1})|^m\le C(\overline{u}_s^{m(\alpha+1)}|\nabla \eta|^m+\eta^m\overline{u}_s^{m\alpha}|\nabla\overline{u}_s|^m)$ and by using (\ref{equim}) and the $\Delta_2$ condition, we infer that

\begin{equation}
\label{M9}\int |\nabla(\eta \overline{u}_s^{\alpha+1})|^m\le C\int(\overline{u}_s^{m(\alpha+1)}|\nabla \eta|^m+\eta^m\overline{u}_s^{m\alpha}\phi(|\nabla\overline{u}_s|)|\nabla\overline{u}_s|^2).\end{equation}

We use the Sobolev embedding, (\ref{M8}) and (\ref{M9}) to get the following inequality

\begin{equation}\label{M10}\begin{split} &\|\eta\overline{u}_s^{\alpha+1}\|_{m^\star}^m\le  C\int\overline{u}_s^{m(\alpha+1)}|\nabla \eta|^m+\frac{C}{m\alpha+1-Cm\epsilon^{m'}}\int mg_1(x,\epsilon)\phi(\overline{u})\overline{u}^2\overline{u}_s^{m\alpha} \\ & +\frac{C}{m\alpha+1-Cm\epsilon^{m'}}\left(\frac{\|f\|_q}{k^{m-1}\delta^{mq/(mq-N)}}\int \eta^m\overline{u}_s^{m\alpha}\overline{u}^m+\frac{\|f\|_q\delta^{mq/N}}{k^{m-1}}\|\eta\overline{u}_s^{\alpha}\overline{u}\|_{m^\star}^m\right).\end{split}
\end{equation}

We let $s\to \infty$ and use the monotone convergence theorem to conclude from (\ref{M10}) that

\begin{equation}\label{M11}\begin{split} &\|\eta\overline{u}^{\alpha+1}\|_{m^\star}^m-\frac{C}{m\alpha+1-Cm\epsilon^{m'}}\frac{\|f\|_q\delta^{mq/N}}{k^{m-1}}\|\eta\overline{u}^{\alpha+1}\|_{m^\star}^m\le  C\int\overline{u}^{m(\alpha+1)}|\nabla \eta|^m\\ &+\frac{C}{m\alpha+1-Cm\epsilon^{m'}}\left(\int mg_1(x,\epsilon)\overline{u}^{m(\alpha+1)} +\frac{\|f\|_q}{k^{m-1}\delta^{mq/(mq-N)}}\int \eta^m\overline{u}^{m(\alpha+1)}\right).\end{split}
\end{equation}

Choose $$\delta= \left(\frac{(m\alpha+1-Cm\epsilon^{m'})k^{m-1}}{2C\|f\|_q}\right)^{N/mq},$$

\noindent and substitute it on (\ref{M11}) to find that

\begin{equation}\label{M12}\begin{split} &\|\eta\overline{u}^{\alpha+1}\|_{m^\star}^m\le C\int\overline{u}^{m(\alpha+1)}|\nabla \eta|^m+\frac{C}{m\alpha+1-Cm\epsilon^{m'}}\int mg_1(x,\epsilon)\overline{u}^{m(\alpha+1)} \\ & +C\left(\frac{\|f\|_q}{(m\alpha+1-Cm\epsilon^{m'})k^{m-1}}\right)^{N/(mq-N)}\int \eta^m\overline{u}^{m(\alpha+1)}.\end{split}
\end{equation}

If $$G(R_1,R_2)= \left(\frac{1}{R_1-R_2}\right)^\ell+\left(\frac{1}{R_1-R_2}\right)^m,$$

\noindent we conclude from (\ref{M12}) and the definition of $\eta$ and $g_1$ that

\begin{equation}\label{M13}\begin{split} \|\eta\overline{u}^{\alpha+1}&\|_{m^\star}^m\le \frac{C}{(R_1-R_2)^m}\int_{\Omega_{R_2}}\overline{u}^{m(\alpha+1)}+\frac{CG(R_1,R_2)}{m\alpha+1-Cm\epsilon^{m'}}\int_{\Omega_{R_2}} \overline{u}^{m(\alpha+1)} \\ & +C\left(\frac{\|f\|_q}{(m\alpha+1-Cm\epsilon^{m'})k^{m-1}}\right)^{N/(mq-N)}\int_{\Omega_{R_2}}\overline{u}^{m(\alpha+1)}.\end{split}
\end{equation}

Now, we use the fact that $\eta=1$ in $\Omega_{R_1}$ and (\ref{M13}) to infer that

\begin{equation}\label{M14}\begin{split} \|\overline{u}^{\alpha+1}&\|_{L^{m^\star}(\Omega_{R_1})}\le C\left[\frac{1}{R_1-R_2}+\left(\frac{G(R_1,R_2)}{m\alpha+1-Cm\epsilon^{m'}}\right)^{1/m}\right]\|\overline{u}^{\alpha+1}\|_{L^{m}(\Omega_{R_2})}\\ &+C\left(\frac{\|f\|_q}{(m\alpha+1-Cm\epsilon^{m'})k^{m-1}}\right)^{N/m(mq-N)}\|\overline{u}^{\alpha+1}\|_{L^{m}(\Omega_{R_2})}.\end{split}
\end{equation}

Let $\chi=N/(N-m)$. It follow from (\ref{M14}) that

\begin{equation}\label{M15}\begin{split} \|\overline{u}&\|_{L^{m\chi(\alpha+1)}(\Omega_{R_1})}\le C^{1/(\alpha+1)}\Bigg[\frac{1}{(R_1-R_2)}+\left(\frac{G(R_1,R_2)}{m\alpha+1-Cm\epsilon^{m'}}\right)^{1/m}\\ &+\left(\frac{\|f\|_q}{(m\alpha+1-Cm\epsilon^{m'})k^{m-1}}\right)^{N/m(mq-N)}\Bigg]^{1/(\alpha+1)}\|\overline{u}\|_{L^{m(\alpha+1)}(\Omega_{R_2})}.\end{split}
\end{equation}

For $n\in \{0,1,2,\cdots\}$, let $\alpha+1=\chi^n$ and $R_n=R_2+(R_1-R_2)/2^n$. From (\ref{M15}) it follows that

\begin{equation}\label{M16}\begin{split} \|\overline{u}&\|_{L^{m\chi^{n+1}}(\Omega_{R_{n+1}})}\le C^{1/\chi^n}\Bigg[\frac{1}{R_{n}-R_{n+1}}+\left(\frac{G(R_{n},R_{n+1})}{m(\chi^n-1)+1-Cm\epsilon^{m'}}\right)^{1/m}\\ &+\left(\frac{\|f\|_q}{(m(\chi^n-1)+1-Cm\epsilon^{m'})k^{m-1}}\right)^{N/m(mq-N)}\Bigg]^{1/\chi^n}\|\overline{u}\|_{L^{m\chi^n}(\Omega_{R_n})}.\end{split}
\end{equation}

Once $R_n-R_{n+1}=(R_1-R_2)/2^{n+1}$ goes to zero as $n$ goes to infinity, there is a positive constant $C$ (independent of $n$) such that $$G(R_{n},R_{n+1})^{1/m}\le \frac{C}{(R_n-R_{n+1})},$$

\noindent therefore, from (\ref{M16}) and the last inequality, we obtain

\begin{equation*}\label{M17}\begin{split} \|\overline{u}&\|_{L^{m\chi^{n+1}}(\Omega_{R_{n+1}})}\le C^{1/\chi^n}\Bigg[\frac{2^{n+1}}{R_{1}-R_{2}}+\left(\frac{2^{n+1}}{R_{1}-R_{2}}\frac{1}{m(\chi^n-1)+1-Cm\epsilon^{m'}}\right)^{1/m}\\ &+\left(\frac{\|f\|_q}{(m(\chi^n-1)+1-Cm\epsilon^{m'})k^{m-1}}\right)^{N/m(mq-N)}\Bigg]^{1/\chi^n}\|\overline{u}\|_{L^{m\chi^n}(\Omega_{R_n})}.\end{split}
\end{equation*}

By taking $n=0,1,\cdots$, we note that $\|\overline{u}\|_{L^{m\chi^n}(\Omega_{R_n})}$ is finite for every $n$. Moreover, because $\chi>1$, there is a $n_0$, which will depend upon $\chi$, such that the following holds $$\left(\frac{1}{m(\chi^n-1)+1-Cm\epsilon^{m'}}\right)^{1/m\chi^n}\le 1,\ \forall\ n\ge n_0,$$

and $$\left(\frac{1}{m(\chi^n-1)+1-Cm\epsilon^{m'}}\right)^{N/m(mq-N)\chi^n}\le 1,\ \forall\ n\ge n_0,$$

\noindent whence,

\begin{equation}\label{M18}\begin{split} \|\overline{u}\|_{L^{m\chi^{n+1}}(\Omega_{R_{n+1}})}\le C^{1/\chi^n}\left[\frac{2^{n+1}}{R_{1}-R_{2}}+\left(\frac{2^{n+1}}{R_{1}-R_{2}}\right)^{1/m}+\left(\frac{\|f\|_q}{k^{m-1}}\right)^{\beta}
\right]^{1/\chi^n}\|\overline{u}\|_{L^{m\chi^n}(\Omega_{R_n})},\end{split}
\end{equation}
\noindent holds for any $n \geq n_0$ where $\beta=N/m(mq-N)$. We can also assume that for $n\geq n_0$,

$$\left(\frac{2^{n+1}}{R_{1}-R_{2}}\right)^{1/m}\le \frac{2^{n+1}}{R_{1}-R_{2}},$$

\noindent and $$1\le\frac{2^{n+1}}{R_{1}-R_{2}},$$

\noindent therefore, from (\ref{M18}), we conclude that

\begin{equation*}\label{M19}\begin{split} \|\overline{u}\|_{L^{m\chi^{n+1}}(\Omega_{R_{n+1}})}\le \left[\frac{C}{R_1-R_2}\left(1+\frac{\|f\|_q^\beta}{k^{\beta(m-1)}}\right)\right]^{1/\chi^n}2^{(n+1)/\chi^n}\|\overline{u}\|_{L^{m\chi^n}(\Omega_{R_n})},\ \forall\ n\ge n_0,\end{split}
\end{equation*}

\noindent which implies that (after an argument of iteration) for $n\geq n_0$

\begin{equation*}\begin{split} \|\overline{u}\|_{L^{m\chi^{n+1}}(\Omega_{R_{n+1}})}\le \prod_{i=n_0}^n\left[\frac{C}{R_1-R_2}\left(1+\frac{\|f\|_q^\beta}{k^{\beta(m-1)}}\right)\right]^{1/\chi^i}2^{(i+1)/\chi^i}\|\overline{u}\|_{L^{m\chi^{n_0}}(\Omega_{R_{n_0}})}.\end{split}
\end{equation*}

By letting $n\to\infty$ we infer that

\begin{equation}\label{M20}\begin{split} \|\overline{u}\|_{L^\infty(\Omega_{R_1})}\le \left[\frac{C}{R_1-R_2}\left(1+\frac{\|f\|_q^\beta}{k^{\beta(m-1)}}\right)\right]^{1/(1-\chi)}\|\overline{u}\|_{L^{m\chi^{n_0}}(\Omega_{R_{n_0}})}.\end{split}
\end{equation}

We conclude from (\ref{M20}) that $u^+\in L^\infty (\Omega_{R_1})$. By a similar argument, we also have that $u^-\in L^\infty (\Omega_{R_1})$. To extend the result to the boundary, for small $s>0$, let $U_s=\{x\in \mathbb{R}^N\setminus\overline{\Omega}:\ \operatorname{dist}(x,\partial\Omega)<s\}$. Let $\Omega_s=\overline{\Omega}\cup U_s$. Let $\tilde{u},\tilde{f}:\Omega_s\to\mathbb{R}$ be extensions by zero of $u,f$. Note that $\tilde{u}$ is a solution of the problem (\ref{p1}) with $\tilde{f}$ in the place of $f$ and $\Omega_s$ in the place of $\Omega$. Now we apply the same argument as before to conclude that $u\in L^\infty(\Omega)$.
\end{proof}

\section{The proof our main theorems}
In this section we shall give the proof of our main theorems using the Orlicz-Sobolev framework discussed in previous sections.
\subsection{The proof of Theorem \ref{th1}}
First of all, using Proposition \ref{teo_prin} we know that problem \eqref{p1} admits at least one solution in $\w$. According Theorem
\ref{taux} we mention that $u$ is in $L^{\infty} (\Omega)$ whenever $\Phi$ is equivalent to the function  $t \rightarrow |t|^r, \,~r\in (1,\infty))$ and $\ell > 1$. Besides that, using Proposition \ref{unic}, we know that problem \eqref{p1} admits at most one weak solution. This ends the proof.

\subsection{The proof of Theorem \ref{th2}}
Initially, using hypothesis $(\phi_{3})$ and Proposition \ref{l0}, we obtain at least one weak solution $u \in \w$. Furthermore, using $(\phi_{3})^{\prime}$ instead of $(\phi_{3})$, we obtain two weak solutions $u_{1}, u_{2} \in \w$ in such way $J(u_{1}), J(u_{2}) > 0$, see Proposition \ref{l1}. Hence, taking the negative part of $u_{1}$ as testing function, we deduce that $u_{1} \geq 0$. At the same time, using the positive part of $u_{2}$, we observe that $u_{2} \leq 0$ in $\Omega$. Now, using Theorem \ref{taux}, we know also that $u_{1}, u_{2}$ is in $L^{\infty} (\Omega)$ whenever $\ell > 1$ whenever the function $\Phi$ is equivalent to $t \rightarrow |t|^{r}$ for some $r > 1$. So that regularity results on quasilinear elliptic problems imply that $u_{1}, u_{2}$ are in $C^{1}(\overline{\Omega})$, see Lieberman \cite{Lie1,Lie2} whenever $\ell > 1$. This finishes the proof.


\begin{thebibliography}{99}

\bibitem{A} Adams, R. A., Fournier,  J. F., {\it Sobolev Spaces}, Academic Press, New York, (2003).
	
\bibitem{Boccardo1} Boccardo, L., Orsina, L., {\it Leray-Lions operators with logarithmic growth}, Journal Mat. Anal. Appl 423, 608--622, (2015).

\bibitem{Boccardo2} Boccardo, L., Gallou\'et, T., {\it $W^{1,1}_{0}$ solutions in some borderline cases of Calderon--Zygmund theory}, J. Differential Equations 253  2698--2714, (2012).


\bibitem{JVMLED}  Carvalho, M. L. M,  Goncalves, J. V.,  da Silva, E. D.,  {\it  On quasilinear elliptic problems without the Ambrosetti--Rabinowitz condition}, Journal Anal. Mat. Appl 426, 466--483, (2015).


\bibitem{chung} Chung, N.T. \& Toan,  H.Q., {\it On a nonlinear and non-homogeneous problem without (A-R)
	type condition in Orlicz-Sobolev spaces}, Applied Mathematics and Computation 219, 7820--7829, (2013).

\bibitem{hui-2}  Cl\'ement, Ph.,  Garc\'ia-Huidobro, M.,  Man\'asevich, R. \&  Schmitt,  K., {\it Mountain pass type solutions
for quasilinear elliptic equations}, Calc. Var. 11, 33--62, (2000).

\bibitem{clement}  Cl\'ement, P., de Pagter, B., Sweers, G., de Thélin, F.{\it Existence of solutions to a semilinear elliptic system through Orlicz-Sobolev spaces}, Mediterr. J. Math. 1(3), 241--267, (2004).

\bibitem{Correa} Corr\^ea, F. J. S. A., Carvalho, M. L. M., Goncalves, J. V., Silva, K. O., {\it On the Existence of Infinite Sequences of Ordered Positive Solutions of Nonlinear Elliptic Eigenvalue Problems}, Adv. Nonlinear Stud. 16(3) , 439--458, (2016).

\bibitem{maso} Dal Maso, G., Murat F., {\it Almost everywhere convergence of gradients of solutions to nonlinear elliptic systems}, Nonlinear Anal., Theory, Methods \& Applications 31, 405--412, (1998).

\bibitem{DT} Donaldson, T. K. \& Trudinger, N. S., {\it Orlicz-Sobolev spaces and imbedding theorems}, J. Functional Analysis 8, 52--75, (1971).

\bibitem{esposito1}  Esposito, L., Mingione, G., {\it Partial regularity for minimizers of convex integrals with L logL-growth}, NoDEA Nonlinear Differential Equations Appl. 7, 107--125, (2000).


\bibitem{Fuchs1}  Fuchs, M., Seregin,  G., {\it A regularity theory for variational integrals with L logL-growth}, Calc. Var. Partial Differential Equations 6, 171--187, (1998).

\bibitem{Fuchs2}  Fuchs, M., Seregin,  G., {\it Variational methods for fluids of Prandtl–Eyring type and plastic materials with logarithmic hard-ening}, Math. Methods Appl. Sci. 22,  317--351, (1999).

\bibitem {Fuk_1}  Fukagai, N., Ito, M. \& Narukawa, K., {\it Positive solutions of quasilinearelliptic equations with critical Orlicz-Sobolev nonlinearity on $\mathbb{R}^N$}, Funkcialaj Ekvacioj {49}, 235--267,  (2006).

\bibitem  {Fuk_2} Fukagai, N. \& Narukawa, K., {\it On the existence of multiple positive solutions of quasilinear elliptic eigenvalue problems}, Annali di Matematica 186, 539--564, (2007).

\bibitem{hui-3} Garc\'ia-Huidobro, M.,  Le, V.K., Man\'asevich, R., Schmitt, K., {\it On principal eigenvalues
	for quasilinear elliptic differential operators: an Orlicz-Sobolev space setting}, NoDEA
Nonlinear Differential Equations Appl. 6 , 207--225, (1999).

\bibitem{gossez-Czech} Gossez, J. P., {\it Orlicz-Sobolev spaces and nonlinear elliptic boundary value problems}, Nonlinear analysis, function spaces and applications, (Proc. Spring School, Horni Bradlo, 1978), Teubner, Leipzig,   59--94, (1979).

\bibitem{Gz1}  Gossez, J. -P., {\it Nonlinear elliptic boundary value problems for equations with raplidy (or slowly) incressing coefficients}, Trans. Amer. Math. Soc. 190, 163--205, (1974).

\bibitem{Le2000} Le, V. K., {\it A global bifurcation result for quasilinear elliptic equations in Orlicz-Sobolev spaces}, Topol. Methods Nonlinear Anal. 15, 301--327, (2000).

\bibitem{Lie1} Lieberman, G.M., {\it Boundary regularity for solutions of degenerate elliptic equations}, Nonlinear Anal., Theory, Methods \& Applications 12, 1203--1219, (1988).

\bibitem{Lie2} Lieberman, G.M., {\it The natural generalization of the natural conditions of Ladyzhenskaya and Ural\'tseva for elliptic equation }, Comm. Partial Differential Equations 16, 311--361, (1991).


\bibitem{MI}  Mihailescu, M.,  Radulescu, V., {\it Existence and multiplicity of solutions
for quasilinear nonhomogeneous problems:
An Orlicz--Sobolev space setting }
 J. Math. Anal. Appl. 330, 416--432, (2007).

\bibitem{MugnaiPapageorgiou} Mugnai, D. \& Papageorgiou, N. S., {\it Wang's multiplicity result for superlinear $(p,q)$-equations without the Ambrosetti-Rabinowitz condition}, Trans. Amer. Math. Soc. 366, (2014), 4919--4937.

\bibitem{Passarelli} Passarelli di Napoli, A., {\it Existence and regularity results for a class of equations with logarithmic growth}. Nonlinear Anal.  125, 290--309, (2015).


\bibitem{Rao1}   Rao, M. N., Ren, Z. D.,  {\it Theory of Orlicz Spaces}, Marcel Dekker, New York, (1985).

\bibitem{fang} Tan, Z. \& Fang, F. {\it Orlicz-Sobolev versus H\"older local minimizer and
multiplicity results for quasilinear elliptic equations}, J. Math. Anal. Appl. 402, 348--370, (2013).


\bibitem{Zhang}  Zhang, C., Zhou,  S., {\it On a class of non-uniformly elliptic equations}, NoDEA Nonlinear Differential Equations Appl. 19, 345--363, (2012).
\end{thebibliography}
\end{document}